\newcommand{\eps}{\varepsilon}
\newcommand{\supp}{\mathrm{supp}}
\numberwithin{equation}{section}
\newcommand{\wtilde}{\widetilde}
\newtheorem{theorem}{Theorem}[section]
\newtheorem{lemma}[theorem]{Lemma}
\newtheorem{proposition}[theorem]{Proposition}
\newtheorem{definition}[theorem]{Definition}
\newtheorem{remark}[theorem]{Remark}
\newcommand{\embed}{\hookrightarrow}
\def\Ls{\mathcal{L}}
\def\X{\mathbb{X}}
\def\A{\mathcal{A}}
\def\Fn{\mathcal{F}}
\def\Gn{\mathscr{G}}
\def\Ln{\mathscr{L}}
\def\bA{\mathscr{A}}
\def\bL{\mathcal{L}}
\def\E{\mathbb{E}}
\def\bU{\mathcal{U}}
\def\Bb{\mathbb{B}}
\def\0{\boldsymbol{0}}
\def\Y{\mathbb{Y}}
\def\N{\mathbb{N}}
\def\bM{\mathcal{M}}
\def\H{\mathrm{\mathbb{H}}}
\def\Hb{\mathcal{H}}
\def\V{\mathbb{V}}
\def\bS{\mathcal{S}}
\def\bO{\mathcal{O}}
\def\bE{\mathcal{E}}
\def\bh{\mathfrak{h}}
\newcommand{\norm}[1]{\left\Vert#1\right\Vert}
\newcommand{\abs}[1]{\left\vert#1\right\vert}
\newcommand{\R}{\mathbb{R}}
\newcommand{\loc}{\mathop{\mathrm{loc}}}
\renewcommand{\d}{\/\mathrm{d}\/}
\let\originalleft\left
\let\originalright\right
\renewcommand{\left}{\mathopen{}\mathclose\bgroup\originalleft}
\renewcommand{\right}{\aftergroup\egroup\originalright}
\newcommand{\Nn}{\mathcal{N}}
\newcommand{\ip}[2]{\fourIdx{}{0}{}{\!x}{\mathcal{ X}}}
\newcommand\dela[1]{}
\def\l@subsection{\@tocline{2}{0pt}{4pc}{6pc}{}}
\def\l@subsubsection{\@tocline{3}{0pt}{8pc}{8pc}{}}
\def\l@section{\@tocline{1}{12pt}{0pt}{}{\bfseries}}
\title[Global Well-posedness and Asymptotic analysis of a heat equation]{Global well-posedness and Asymptotic analysis of a nonlinear heat equation with constraints of finite codimension}
\begin{document}
\maketitle
\begin{center}
	\author{Ashish Bawalia\footnote[4]{Department of Mathematics, Indian Institute of Technology Roorkee-IIT Roorkee, Haridwar Highway, Roorkee, Uttarakhand 247667, India.\\
			\textit{e-mail:} Ashish Bawalia: \email{ashish1@ma.iitr.ac.in; ashish1441chd@gmail.com}.\\
			\textit{e-mail:} Manil T. Mohan: \email{maniltmohan@ma.iitr.ac.in; maniltmohan@gmail.com}.}\orcidlink{0009-0002-9141-2766}, Zdzis\l{}aw Brze\'zniak\footnotemark[2]$^\ast$\orcidlink{0000-0001-8731-6523} and Manil T. Mohan\footnotemark[4]\orcidlink{0000-0003-3197-1136}}
	\footnotetext[2]{Department of Mathematics, The University of York, Heslington, York YO10 5DD, UK.\\
		\textit{e-mail:} Zdzis\l{}aw Brze\'zniak: \email{zdzislaw.brzezniak@york.ac.uk}.\\
		$\hspace{2mm} ^\ast$Corresponding author.\\
		\textit{Key words}: Heat equation $\cdot$ Faedo-Galerkin approximation $\cdot$ Strong solutions $\cdot$ Constraints $\cdot$ Monotonicity and Hemicontinuity $\cdot$ Global solutions $\cdot$ Asymptotic analysis.
			\\
		\textit{MSC 2020}:  35R01, 35K61, 58J35, 35B40.}
\end{center}

\begin{abstract}
	We prove the global existence and the uniqueness of the $L^p\cap H_0^1-$valued ($2\leq p < \infty$) strong solutions of a nonlinear heat equation with constraints over bounded domains in any dimension $d\geq 1$. Along with the \textit{Faedo-Galerkin} approximation method and the compactness arguments, we utilize the monotonicity and the hemicontinuity properties of the nonlinear operators to establish the well-posedness results. In particular, we show that a Hilbertian manifold $\bM$, which is the unit sphere in $L^2$ space, describing the constraint is invariant. 
	Finally, in the asymptotic analysis, we generalize the recent work of [P. Antonelli, et. al. \emph{Calc. Var. Partial Differential Equations}, 63(4), 2024] to any bounded smooth domain in $\mathbb{R}^d$, $d\geq1$, when the corresponding nonlinearity is a damping. In particular, we show that, for positive initial datum and any $2\le p < \infty$, the unique positive strong solution of the above mentioned nonlinear heat equation with constraints converges in $L^p\cap H_0^1$ to the unique positive ground state.
\end{abstract}


\tableofcontents

\section{Introduction}\label{Sec-Intro}
Let $(\Hb,\|\cdot\|_{\Hb})$ be a Hilbert space with the inner product $(\cdot,\cdot)_{\Hb}$ and $\bM$ denotes unit sphere in $\Hb$, i.e., 
$$\bM:=\left\{\bh\in\Hb:\|\bh\|_{\Hb} =1 \right\}.$$
The tangent space to $\bM$ at a point $\bh \in \Hb$ is characterized by $T_\bh\bM = \{ z \in \Hb : (\bh, z)_{\mathcal{H}} = 0 \},$ that is, it consists of all elements in $\Hb$ orthogonal to $\bh$. Let $F : \Hb \supset D(F) \to \R$ be a scalar field defined on $\Hb$, possibly only densely defined. Then, for every $u_0\in \Hb$, there exists a unique global solution to the initial value problem
\begin{align*}
	\left\{\begin{aligned}
		\frac{d u(t)}{d t} & = F(u(t)), \ \ t> 0,\\
		u(0) & = u_0.
	\end{aligned}
	\right.
\end{align*}
In general, the semi-flow associated with the initial value problem described above, say $\{ \varphi(t, u_0) \}_{t \geq 0}$, does not remain confined to the manifold $\bM$, even when the initial condition satisfies $u_0 \in \bM$. This lack of invariance arises from the fact that the scalar field $F$ is not, in general, tangent to $\bM$; that is, the condition
\begin{align}\label{eqn-f-non-tangent}
	F(\bh) \in T_\bh \bM, \ \text{ for all }\ \bh \in D(F) \cap \bM,
\end{align}
is not necessarily satisfied. Consequently, the evolution dictated by the flow may immediately exit $\bM$, thereby preventing the manifold from being invariant under the dynamics unless additional compatibility conditions are imposed on the vector field $F$.
However, it is straightforward to construct a modified scalar field $\wtilde{F}$ from the original function $F$ such that the tangency condition \eqref{eqn-f-non-tangent} is satisfied. This modification can be accomplished by employing a mapping that projects $F(\bh)$ onto the tangent space $T_\bh \bM,$ for each $\bh \in D(F) \cap \bM$, i.e.,
$\pi_\bh : D(F) \to \Ls(\Hb, \Hb)$, defined by
\begin{align*}
	\pi_\bh := \{\Hb \ni z \mapsto z - (\bh, z)_{\Hb} \bh \in \Hb\} \in \Ls(\Hb,\Hb),\   \mbox{ for all } \ \bh\in D(F).
\end{align*}
A key property of the mapping $\pi_\bh(\cdot)$ is that, for each $\bh \in \bM$, the associated linear map $\pi_\bh : D(F) \to T_\bh \bM$ coincides with the orthogonal projection onto the tangent space $T_\bh \bM$. As a consequence, the modified scalar field $\wtilde{F}$, defined by
\begin{align*}
	\wtilde{F} : D(F) \ni \bh \mapsto \pi_\bh [F(\bh)] \in \Hb,
\end{align*}
for $\bh\in D(F)\cap \bM$, indeed satisfies
\begin{align*}
	( \wtilde{F}(\bh), \bh ) _{\Hb}& = ( \pi_\bh[F(\bh)], \bh )_{\Hb} = ( F(\bh) - ( \bh, F(\bh) )_{\Hb}\, \bh, \bh )_{\Hb}\\
	& = ( F(\bh),\bh )_{\Hb} - ( \bh, F(\bh))_{\Hb} \|\bh\|_{\Hb}^2 =0.
\end{align*}
Consequently, the modified scalar field $\wtilde{F}$ satisfies the tangency condition \eqref{eqn-f-non-tangent}. Moreover, if the original scalar field $F$ is globally defined, i.e., $D(F) = \mathcal{H}$, and locally Lipschitz continuous, then the modified field $\wtilde{F}$ inherits these properties as well. Under these assumptions, the initial value problem
\begin{align}\label{eqn-modified-IVP}
	\begin{cases}
		\displaystyle \frac{d u(t)}{dt} = \wtilde{F}(u(t)), & t > 0, \\
		u(0) = u_0,
	\end{cases}
\end{align}
possesses a local-in-time solution, for every initial datum $u_0 \in D(F)$. Furthermore, if the initial condition $u_0\in\bM$, then the corresponding solution $u(t)\in\bM$, for all $t$, in its interval of existence.
On the other hand, the analysis becomes more subtle when the scalar field $F$ is only densely defined. In this setting, we shall examine the following two distinguished cases:

Let $\bO \subset \R^d$, for $d \in \N$, be a bounded domain with boundary of class $C^2$. Let $A$ denote the negative Laplacian operator $-\Delta$ equipped with homogeneous Dirichlet boundary conditions. Define the scalar field $F : D(A) \subset \Hb \to \Hb$ by $F(u) = -A u$, where $(\Hb, (\cdot, \cdot)_\Hb) = (L^2(\bO), (\cdot,\cdot))$.
In the this case, we see that
\begin{align*}
	\wtilde{F}(u) = \pi_u [F(u)] = -Au - ( u, -Au ) u = -Au - ( u, \Delta u ) u = -Au + \norm{\nabla u}_{L^2(\bO)}^2 u.
\end{align*}
The second case corresponds to the scalar field defined by $F(u) = - \abs{u}^{p-2}u $, where $p\in[2,\infty)$. In this scenario, we assert
\begin{align*}
	\wtilde{F}(u) = \pi_u [F(u)] = -|u|^{p-2}u  + ( u, |u|^{p-2}u)u = -|u|^{p-2}u + \norm{ u}_{L^p}^p u.
\end{align*}
Broadly speaking, the principal aim of this manuscript is to deliver a unified and rigorous treatment of both examples simultaneously. More specifically, we establish the existence and uniqueness result to the initial value problem \eqref{eqn-modified-IVP} with 
$$\wtilde{F}(u)=\Delta u - \abs{u}^{p-2}u +\big( \norm{\nabla u}_{L^2(\bO)}^2+\norm{ u}_{L^p}^p\big) u.$$


\begin{remark}
	Observe that one can also consider $\Hb-$constraint with different values; in that case, the above projection will be
	\begin{align}\label{Def-proj-modified}
		\pi_\bh := \bigg\{\Hb \ni z \mapsto z - \frac{(\bh, z)_{\Hb}}{\norm{\bh}_{\Hb}^2} \bh \in \Hb\bigg\} \in \Ls(\Hb,\Hb),\   \mbox{ for } \ 0\ne \bh\in D(F).
	\end{align}
	For example, in \cite{PA+PC+BS-24}, for $\Hb=L^2(\bO)$, the constrained is considered as $\norm{u(t)}_{L^2(\bO)} = \norm{u_0}_{L^2(\bO)}$, for all $t>0$.
\end{remark}

\subsection{Informal description of the results}
This article makes two key contributions. First, it investigates the well-posedness of a generalized constrained system, specifically a nonlinear heat equation with a polynomial-type damping term under Dirichlet boundary conditions. In the second part, the study focuses on the asymptotic behavior of strong solutions to the same problem, providing a detailed analysis of their long-term dynamics.   
Given $2\leq p<\infty$ and $u_0\in L^p(\bO) \cap H_0^1(\bO) \cap \bM$, 
where
\begin{align}\label{eqn-Sphere-bM}\bM:=\left\{u\in L^2(\bO):\|u\|_{L^2(\bO)}=1 \right\},\end{align}
we consider the following initial value problem:
\begin{align}\label{eqn-main-problem}
	\left\{\begin{aligned}
		\frac{\partial u(t)}{\partial t} & = \Delta u(t) -|u(t)|^{p-2}u(t) + \left( \norm{\nabla u(t)}_{L^2(\bO)}^2 + \norm{u(t)}_{L^p(\bO)}^p\right) u(t),\ \ t>0,\\
		u(0) & = u_0, \\
		u(t)|_{\partial\bO} & = 0,
	\end{aligned}\right.
\end{align}
where $u:[0,\infty) \times\bO \to \R$. Moreover, $$\|u(t)\|_{L^2(\bO)}  = 1,\ \text{ for every }\ t\geq 0.$$  

By the method of Faedo-Galerkin approximations, the compactness arguments and  Minty-Browder type techniques, 
we prove the following:
\begin{itemize}
	\item[(i)] there exists a unique $L^p(\bO)\cap H_0^1(\bO)-$valued ($2\leq p < \infty$) strong solution to the system \eqref{eqn-main-problem}, by using a sequence of projections $\{P_m\}_{m\in\N}$  and  self-adjoint operators $\{S_m\}_{m\in\N}$ (for e.g., \cite{ZB+BF+MZ-24, ZB+FH+LW-19, ZB+FH+UM-20, FH-18}) which is bounded in $\bL(L^p(\bO)),$ and the fact that Hilbert spaces and $L^p(\bO)$ spaces, for $p\in(1,\infty)$, are uniformly convex and they satisfy the \textit{Radon-Riesz property};
\item[(ii)] the strong solution is \textit{invariant} in the manifold $\bM$, i.e., when $u_0$ is in $\bM,$ then all its corresponding trajectories $u(t)$, for $t\in[0,T)$, stay in $\bM$;
\item[(iii)] the energy $\bE$ corresponding to the above problem \eqref{eqn-main-problem} is dissipative, i.e.,
\begin{align*}
	\bE(u(t)) \leq \bE(u_0),\ \text{ for all }\ t>0,
\end{align*}
\end{itemize}
where
\begin{equation}\label{eqn-Energy}
\bE(u(t)) := \frac{1}{2} \int_\bO \abs{\nabla u(t,x)}^2 dx + \frac{1}{p} \int_\bO \abs{ u(t,x)}^p dx.
\end{equation}
\begin{remark}
	Note that in equation \eqref{eqn-main-problem}, the projected component already lies within the corresponding tangent space. Hence, the invariance of the manifold is inherently linked to ensuring the well-posedness of the unique strong solution.
\end{remark}
The present study provides a novel and comprehensive framework for establishing the well-posedness of global strong solutions in $L^p(\bO) \cap H_0^1(\bO)$ for constrained nonlinear heat equations, where the nonlinear damping term is of polynomial type, a setting that, to our knowledge, has not been addressed in prior literature.
For $L^p(\bO)\cap H_0^1(\bO)$ initial datum, our work breaks the restriction on the power of the nonlinearity to $2\leq p < \infty$, which was $2 \leq p \leq \frac{2d}{d-2}$ in \cite[Theorem 1.1]{ZB+JH-24}, \cite[Theorem 1.2]{PA+PC+BS-24} and \cite[Theorem 2.2]{JH-23} due to Sobolev's embedding. 
One of the  novelties of this work is that we use the \textit{Minty-Browder} type techniques (with fully local monotone coefficients) to prove the existence and uniqueness of the strong solution to the problem \eqref{eqn-main-problem}.

In the second part of this work,  we
study the asymptotic analysis of the strong solution to the problem \eqref{eqn-main-problem}
such that $u(t) \in\bM$, for all $t\geq 0$. For the asymptotic behaviour of positive unique strong solutions of \eqref{eqn-main-problem}, we adapt the \textit{Antonelli's et al.} \cite{PA+PC+BS-24} approach. Building on their strategy, for any bounded smooth domain $\bO$, we show that:
\begin{itemize}
	\item[(i)]  there exists a positive ground state solution to the problem
	\begin{align}\label{eqn-stationary-copy}
		\Delta u -|u|^{p-2}u+ \big( \norm{\nabla u}_{L^2(\bO)}^2 + \norm{u}_{L^p(\bO)}^p \big) u = 0,
	\end{align}
	which also solves the minimization problem 
	\begin{align}\label{min-problem}
		\min_{u\in L^p(\bO) \cap H_0^1(\bO)} \big\{\bE(u) : \norm{u}_{L^2(\bO)} = 1\big\},
	\end{align}
	where  the map $\bE$, defined in \eqref{eqn-Energy},  is weakly lower semicontinuous and coercive on the space $L^p(\bO) \cap H_0^1(\bO)$;
	
	\item[(ii)] by following the work of \textit{Ouyang} \cite{TO-92}, the solution obtained in (i) is unique;
	
	\item[(iii)] for positive $u_0\in L^p(\bO) \cap H_0^1(\bO)\cap\bM$, the unique positive strong solution to the problem \eqref{eqn-main-problem}
	converges strongly to the unique positive ground state solution of the problem \eqref{eqn-stationary-copy}, as time goes to infinity. 
\end{itemize}
For positive initial datum in $L^p(\bO)\cap H_0^1(\bO)\cap \bM$, our work also generalize the existence and uniqueness result on ground states (i.e., \cite[Theorem 1.7]{PA+PC+BS-24}), to any dimension $d\geq 1$ and for any $2 \leq p < \infty$, on any smooth bounded domain. In \cite{PA+PC+BS-24}, the aforementioned result is established on a ball under the assumption $2 \leq p < \frac{2d}{d-2}$.

\begin{remark}
	Note that our results also work with the following general constraint, i.e., if we consider the constrained set as
	\begin{align*}
		\wtilde{\bM} := \big\{u\in L^2(\bO): \|u\|_{L^2(\bO)} = \|u_0\|_{L^2(\bO)} \big\},
	\end{align*}
	and projection $\pi_u(\cdot)$ as defined in Remark \ref{Def-proj-modified}.
\end{remark}

\subsection{Main results}
Let us choose and fix $d\geq 1$ and $2\leq p<\infty$. Let $\bO \subset \R^d$ be a bounded domain with boundary of $C^2-$class. For a fixed $T>0$, we consider the modified heat equation with damping on the time interval $(0,T)$ as described in equation \eqref{eqn-main-problem}, along with the constraint set defined in \eqref{eqn-Sphere-bM}.
We first introduce the notion of a strong solution corresponding to the problem \eqref{eqn-main-problem}.
\begin{definition}\label{def-strong-soln}
	Let $T>0$ and $u_0 \in L^p(\bO) \cap H_0^1(\bO)\cap\bM$ be fixed.
	A function
	\begin{align*}
		u \in C ([0,T]; L^p(\bO) \cap H_0^1(\bO)\cap\bM)\cap L^2(0,T; D(A))\cap L^{2p-2}(0,T; L^{2p-2}(\bO)),
	\end{align*}
	is called a \emph{strong solution} of the system \eqref{eqn-main-problem} on time interval $[0,T]$, if and only if
	the following three conditions are satisfied:
	\begin{itemize}
		\item[(i)] The time derivative of $u$, in the weak sense, $\frac{\partial u}{\partial t}$ is in $L^2(0,T; L^2(\bO))$ and $ \Gn(u) \in L^2(0,T; L^2(\bO))$, where
		\begin{align}\label{eqn-nonlinear}\Gn(u) := \Delta u -|u|^{p-2}u + \big(\norm{\nabla u}_{L^2(\bO)}^2  + \norm{u}_{L^p(\bO)}^p \big)u.
		\end{align}
		\item[(ii)] The following equation is satisfied:
		\begin{align}\label{eqn-test}
			\int_0^T\bigg(\frac{\partial u(t)}{\partial t} - \Gn(u(t)),\varphi(t)\bigg)dt=0,\ \text{ for all }\ \varphi\in L^2(0,T; L^2(\bO)).
		\end{align}
		\item[(iii)] The initial condition is satisfied
		\begin{align}
			u(0) = u_0 \ \text{ in }\  L^2(\bO).
		\end{align}
	\end{itemize}
\end{definition}
\begin{remark}
	In other words, the equation \eqref{eqn-test} can  be understood as
	\begin{align}\label{eqn-102}
		\displaystyle \frac{\partial u(t)}{\partial t}  = \Gn(u(t)),
	\end{align}
	is satisfied in $L^2(0,T;L^2(\bO))$, i.e., $\frac{\partial u}{\partial t}, \Delta u, \abs{u}^{p-2}u \in L^2(0,T; L^2(\bO))$ and equation \eqref{eqn-102} holds true in $L^2(0,T;L^2(\bO))$.
\end{remark}
The following is one of the main results of this work:
\begin{theorem}\label{Thm-main-wellposed}
		Let $T>0$ be fixed.
		For every fixed $u_0 \in L^p(\bO) \cap H_0^1(\bO) \cap \bM$, there exists a unique function $u : [0, \infty) \to L^p(\bO) \cap H_0^1(\bO)\cap \bM$ such that
	$$u\in C([0,T]; L^p(\bO) \cap H_0^1(\bO) \cap \bM) \cap L^2(0,T; D(A))\cap L^{2p-2}(0,T;L^{2p-2}(\bO)),$$
	which solves the problem \eqref{eqn-main-problem} in the sense of Definition \ref{def-strong-soln}. In particular, the function $u$ stays on $\bM$, i.e., $$u(t) \in \bM,\ \text{ for all }\ t\geq 0.$$  
	Moreover, it satisfies
	\begin{align}\label{eqn-Eu}
		\bE(u(t))+\int_0^t\|\nabla_{\bM}\bE(u(t))\|_{L^2(\bO)}^2ds = \bE(u_0), \ \text{ for all }\ t\geq 0,
	\end{align}
	where \begin{align}\label{Eu}
		\bE(u) := \frac{1}{2} \int_\bO \abs{\nabla u(x)}^2 dx + \frac{1}{p} \int_\bO \abs{ u(x)}^p dx,
	\end{align}
	and $\nabla_{\bM}$ is gradient of $\bE$ tangent to the $\bM$, i.e., $$\nabla_{\bM}\bE(u)=\pi_u(\nabla\bE(u)).$$
\end{theorem}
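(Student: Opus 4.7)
The plan is to employ the Faedo-Galerkin approximation together with Minty-Browder type monotonicity arguments, the latter being essential to handle the superlinear damping $|u|^{p-2}u$ for large $p$, where the Sobolev embedding that underpins several earlier works is unavailable. Let $\{e_k\}_{k\in\N}$ be the $L^2(\bO)$-orthonormal eigenbasis of $-\Delta$ with homogeneous Dirichlet boundary conditions, $P_m$ the orthogonal projection onto $H_m := \linspan\{e_1,\dots,e_m\}$, and $S_m$ the self-adjoint auxiliary approximation uniformly bounded in $\bL(L^p(\bO))$ mentioned in the introduction. After renormalizing $u_{0,m} := P_m u_0 / \|P_m u_0\|_{L^2(\bO)}$ so that $\|u_{0,m}\|_{L^2(\bO)}=1$ exactly (with $u_{0,m}\to u_0$ in $L^p(\bO)\cap H_0^1(\bO)$), I will consider on $H_m$ the Galerkin ODE
\begin{equation*}
\frac{d u_m}{dt} = P_m\Delta u_m - P_m\bigl(|u_m|^{p-2}u_m\bigr) + \bigl(\|\nabla u_m\|_{L^2(\bO)}^2+\|u_m\|_{L^p(\bO)}^p\bigr) u_m,\quad u_m(0)=u_{0,m}.
\end{equation*}
Picard--Lindel\"of furnishes a local $C^1$ solution, and pairing with $u_m$ in $L^2(\bO)$ yields
\begin{equation*}
\tfrac{d}{dt}\|u_m\|_{L^2(\bO)}^2 = 2\bigl(\|\nabla u_m\|_{L^2(\bO)}^2+\|u_m\|_{L^p(\bO)}^p\bigr)\bigl(\|u_m\|_{L^2(\bO)}^2-1\bigr),
\end{equation*}
so $\|u_m(t)\|_{L^2(\bO)}\equiv 1$, which both forces global existence and encodes the invariance of $\bM$ at the discrete level.

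Uniform a priori estimates will then be derived by testing the Galerkin equation with $\partial_t u_m$, with $|u_m|^{p-2}u_m$, and by reading $\Delta u_m$ off the equation. The first test, combined with the constancy of $\|u_m\|_{L^2(\bO)}$ which kills the projected-term contribution, produces the energy identity
\begin{equation*}
\int_0^T\|\partial_t u_m\|_{L^2(\bO)}^2\,dt + \bE(u_m(T))=\bE(u_{0,m}),
\end{equation*}
and hence uniform bounds on $u_m$ in $L^\infty(0,T;H_0^1(\bO)\cap L^p(\bO))$ and on $\partial_t u_m$ in $L^2(0,T;L^2(\bO))$. The second test, after discarding the non-positive term $-(p-1)\int_\bO|\nabla u_m|^2|u_m|^{p-2}\,dx$ and using the bounded coefficient $\lambda_m := \|\nabla u_m\|_{L^2(\bO)}^2+\|u_m\|_{L^p(\bO)}^p$, gives the $L^{2p-2}(0,T;L^{2p-2}(\bO))$ bound; the third step then delivers the $L^2(0,T;D(A))$ bound. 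Aubin-Lions extracts a subsequence converging strongly in $C([0,T];L^2(\bO))$ and almost everywhere in $(0,T)\times\bO$, weakly in $L^2(0,T;D(A))\cap L^{2p-2}(0,T;L^{2p-2}(\bO))$, weakly-$\ast$ in $L^\infty(0,T;H_0^1(\bO)\cap L^p(\bO))$, and $\partial_t u_m\rightharpoonup \partial_t u$ in $L^2(0,T;L^2(\bO))$.

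The principal obstacle is the passage to the limit in the superlinear term $|u_m|^{p-2}u_m$. I intend to use the Minty-Browder trick, exploiting the monotonicity of $v\mapsto |v|^{p-2}v$ on $L^p(\bO)$ together with the uniform convexity and Radon-Riesz property of $L^p(\bO)$, $p\in[2,\infty)$, to identify its weak limit as $|u|^{p-2}u$. The scalar coefficients $\|\nabla u_m\|_{L^2(\bO)}^2$ and $\|u_m\|_{L^p(\bO)}^p$ will be shown to converge at a.e.\ time through the a.e.\ convergence of $u_m$, Vitali's theorem, and the uniform bounds, allowing passage to the limit in the projected coefficient $\lambda_m u_m$. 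The invariance $u(t)\in\bM$ for the limit is then obtained by repeating the ODE argument on $\|u(t)\|_{L^2(\bO)}^2$. Uniqueness is obtained by testing the difference equation for $w:=u_1-u_2$ with $w$: the monotonicity of $|v|^{p-2}v$ discards the polynomial term, the constraint identity $(u_i,w)=\tfrac12\|w\|_{L^2(\bO)}^2$ tames the contribution of $\lambda_1 u_1-\lambda_2 u_2$, and Gronwall's lemma closes the estimate after absorbing a small fraction of $\|\nabla w\|_{L^2(\bO)}^2$. Finally, the energy equality \eqref{eqn-Eu} follows from the chain rule applied to $\bE(u(t))$ (legitimized by the regularity $\partial_t u,\Delta u\in L^2(0,T;L^2(\bO))$) together with the pointwise tangency $(\nabla_{\bM}\bE(u),u)_{L^2(\bO)}=0$, which cancels the residual projected contribution.
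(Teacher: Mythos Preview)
Your overall architecture matches the paper's: Faedo--Galerkin with a renormalized initial datum, constraint preservation via the ODE for $\|u_m\|_{L^2}^2-1$, energy identity by testing against $\partial_t u_m$, higher estimates, Aubin--Lions compactness, Minty--Browder identification, Radon--Riesz for strong $L^p\cap H_0^1$ continuity, and uniqueness via monotonicity plus Gr\"onwall. Your uniqueness computation using $(u_i,w)=\tfrac12\|w\|_{L^2}^2$ is in fact tidier than the paper's.

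There is, however, one genuine gap and one closely related overstatement, both stemming from the fact that the eigenprojections $P_m$ are \emph{not} uniformly bounded on $L^p(\bO)$ for $p\neq 2$.

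\textbf{Initial data.} You introduce $S_m$ but then set $u_{0,m}=P_m u_0/\|P_m u_0\|_{L^2}$ and assert $u_{0,m}\to u_0$ in $L^p\cap H_0^1$. For $p>2d/(d-2)$ there is no reason for $\|P_m u_0\|_{L^p}$ to stay bounded, so $\bE(u_{0,m})$ need not be bounded and your entire chain of uniform estimates collapses. The paper's fix is precisely to take $u_m(0)=S_{m-1}u_0/\|S_{m-1}u_0\|_{L^2}$, since $\sup_m\|S_m\|_{\bL(L^p)}<\infty$ guarantees $\bE(u_m(0))\le C\bE(u_0)$; the role of $S_m$ is not auxiliary but essential at exactly this step.

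\textbf{The $L^{2p-2}$ estimate.} Testing the Galerkin system with $|u_m|^{p-2}u_m$ does \emph{not} produce a uniform bound on $\|u_m\|_{L^{2p-2}}$; because the damping term in the equation is $P_m(|u_m|^{p-2}u_m)$, what you actually obtain is
\[
\tfrac{1}{p}\tfrac{d}{dt}\|u_m\|_{L^p}^p+(p-1)\big\||u_m|^{\frac{p-2}{2}}\nabla u_m\big\|_{L^2}^2+\|P_m(|u_m|^{p-2}u_m)\|_{L^2}^2=\lambda_m\|u_m\|_{L^p}^p,
\]
so only $\|P_m(|u_m|^{p-2}u_m)\|_{L^2(0,T;L^2)}$ is bounded. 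Since $\|P_m f\|_{L^2}\le\|f\|_{L^2}$ goes the wrong way, you cannot conclude $u_m$ bounded in $L^{2p-2}(0,T;L^{2p-2})$, and hence cannot extract a weak limit there. The paper instead passes to the limit first (identifying the weak $L^2(0,T;L^2)$ limit of $P_m(|u_m|^{p-2}u_m)$ as $|u|^{p-2}u$ via a.e.\ convergence, Lions' lemma in $L^{p'}$, and uniqueness of weak limits) and only then deduces $u\in L^{2p-2}(0,T;L^{2p-2})$ by weak lower semicontinuity. Your ``third step'' reading $\Delta u_m$ off the equation is salvageable once you accept that the bound is on $P_m(|u_m|^{p-2}u_m)$ rather than on $|u_m|^{p-2}u_m$.
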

\begin{remark}\label{Rmk-En-dissipative}
	Note from the equation \eqref{eqn-Eu} that
	\begin{align*}
		\bE(u(t)) \leq  \bE(u_0), \;\; \text{for every}\ t\geq 0.
	\end{align*}
	It shows that the energy functional $\bE$ is dissipative in time.
\end{remark}
\begin{remark}
	Let us emphasize here that one can also consider the problem \eqref{eqn-main-problem} with more general nonlinearity. For example, by adding $``\sum_{q_k=1}^M |u|^{q_k-2}u -\sum_{q_k=1}^M \|u\|_{L^{q_k}(\mathcal{O})}^{q_k}"$ on the right-hand side of \eqref{eqn-main-problem}  with $2 \leq q_k <p$, which is an ongoing work.
\end{remark}
We postpone the  proof of Theorem \ref{Thm-main-wellposed} in Section \ref{Sec-Proof-main-Thm}.
\noindent

We then turn our attention to the asymptotic analysis of the nonlinear heat equation \eqref{eqn-main-problem}, discussed in the following two important results:

	\begin{proposition}\label{Prop-Stationary-copy}
		Let us choose and fix 
		$u_0 \in L^p(\bO) \cap H_0^1(\bO)\cap\bM$. Suppose
		\begin{align*}
			u & \in C([0,\infty); L^p(\bO) \cap H_0^1(\bO)\cap \bM),
		\end{align*}
		is the unique strong solution to \eqref{eqn-main-problem} guaranteed by Theorem \ref{Thm-main-wellposed} such that
		\begin{align*}
			\frac{\partial u}{\partial t} \in L^2(0,\infty; L^2(\bO)).
		\end{align*}
		Then, one may extract a sequence of times $\{\tau_n\}_{n\in \N}$, $\lim_{n\to\infty}\tau_n =\infty$, such that
		\begin{align}
			u(\tau_n) \to u_{\infty} \ \text{ in }\  L^p(\bO) \cap H_0^1(\bO), \ \ \bS(u(\tau_n)) \to \bS(u_{\infty}) \ \text{ in }\ \R,\ \text{ as }\ n \to \infty,
		\end{align}
		where $\bS(u)= \norm{\nabla u}_{L^2(\bO)}^2 + \norm{u}_{L^p(\bO)}^p$ and $u_{\infty}\in\bM$ solves 
		\begin{align*}
			\Delta u_{\infty} - \abs{u_{\infty}}^{p-2} u_{\infty}
			+ \bS(u_{\infty}) u_{\infty} = 0 \ \text{ in }\ L^{\frac{p}{p-1}}(\bO)+H^{-1}(\bO).
		\end{align*}
\end{proposition}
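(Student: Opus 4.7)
The plan is as follows. First, from the hypothesis $\partial_t u \in L^2(0,\infty; L^2(\bO))$, I would extract a sequence $\tau_n \to \infty$ such that $\partial_t u(\tau_n) \to 0$ in $L^2(\bO)$: since the function $t \mapsto \|\partial_t u(t)\|_{L^2(\bO)}^2$ is integrable on $(0,\infty)$, its essential $\liminf$ at infinity is zero, and a direct selection produces the desired $\tau_n$. Simultaneously, the energy dissipation $\bE(u(t)) \leq \bE(u_0)$ from Remark \ref{Rmk-En-dissipative} provides a uniform bound on $\{u(\tau_n)\}_{n\in\N}$ in $L^p(\bO) \cap H_0^1(\bO)$. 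Extracting a subsequence (not relabelled), I obtain $u(\tau_n) \rightharpoonup u_\infty$ weakly in $L^p(\bO) \cap H_0^1(\bO)$. The compact embedding $H_0^1(\bO) \hookrightarrow L^2(\bO)$ upgrades this to strong convergence in $L^2(\bO)$ and, along a further subsequence, pointwise a.e.\ on $\bO$; in particular $\|u_\infty\|_{L^2(\bO)} = 1$, so $u_\infty \in \bM$.

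Next I would pass to the limit in the weak formulation of \eqref{eqn-main-problem}. The family $\{|u(\tau_n)|^{p-2} u(\tau_n)\}$ is bounded in $L^{p/(p-1)}(\bO)$ and converges a.e.\ to $|u_\infty|^{p-2} u_\infty$, hence also weakly in $L^{p/(p-1)}(\bO)$. Moreover $\{\bS(u(\tau_n))\}$ is a bounded real sequence, so along a further subsequence $\bS(u(\tau_n)) \to s_\infty$ for some $s_\infty \in \R$. Testing equation \eqref{eqn-main-problem} at $t = \tau_n$ against an arbitrary $\varphi \in L^p(\bO) \cap H_0^1(\bO)$ and letting $n \to \infty$, using $\partial_t u(\tau_n) \to 0$ in $L^2(\bO)$, yields
\begin{equation*}
\Delta u_\infty - |u_\infty|^{p-2} u_\infty + s_\infty u_\infty = 0 \quad \text{in } L^{p/(p-1)}(\bO) + H^{-1}(\bO).
\end{equation*}
Pairing this identity with $u_\infty \in L^p(\bO) \cap H_0^1(\bO)$ and using $\|u_\infty\|_{L^2(\bO)}^2 = 1$ produces the identification $s_\infty = \|\nabla u_\infty\|_{L^2(\bO)}^2 + \|u_\infty\|_{L^p(\bO)}^p = \bS(u_\infty)$.

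The principal remaining obstacle, and the step I expect to require the most care, is upgrading weak convergence to strong convergence in $L^p(\bO) \cap H_0^1(\bO)$. By weak lower semicontinuity one has $\|\nabla u_\infty\|_{L^2(\bO)}^2 \leq \liminf_n \|\nabla u(\tau_n)\|_{L^2(\bO)}^2$ and $\|u_\infty\|_{L^p(\bO)}^p \leq \liminf_n \|u(\tau_n)\|_{L^p(\bO)}^p$; since their sum equals $\bS(u_\infty) = s_\infty = \lim_n \bS(u(\tau_n))$, a short $\liminf$/$\limsup$ comparison forces equality in each of the two inequalities, so that
\begin{equation*}
\|\nabla u(\tau_n)\|_{L^2(\bO)} \to \|\nabla u_\infty\|_{L^2(\bO)} \quad \textrm{and} \quad \|u(\tau_n)\|_{L^p(\bO)} \to \|u_\infty\|_{L^p(\bO)}.
\end{equation*}
Because $H_0^1(\bO)$ and $L^p(\bO)$ are uniformly convex, the Radon-Riesz property (already invoked in the existence proof) then combines weak convergence with norm convergence to yield the strong convergence $u(\tau_n) \to u_\infty$ in $L^p(\bO) \cap H_0^1(\bO)$, from which $\bS(u(\tau_n)) \to \bS(u_\infty)$ in $\R$ is immediate and the stationary equation holds as stated.
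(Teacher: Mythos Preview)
Your proposal is correct and follows essentially the same approach as the paper's proof: extract $\tau_n$ with $\partial_t u(\tau_n)\to 0$ in $L^2(\bO)$ from the integrability of $\|\partial_t u(\cdot)\|_{L^2(\bO)}^2$, use the energy bound for weak compactness in $L^p(\bO)\cap H_0^1(\bO)$ and the compact embedding into $L^2(\bO)$ to identify $u_\infty\in\bM$, pass to the limit in the weak formulation to obtain the stationary equation with multiplier $s_\infty$, test against $u_\infty$ to identify $s_\infty=\bS(u_\infty)$, and then upgrade to strong convergence via Radon--Riesz. Your treatment of the final step is in fact a bit more explicit than the paper's, spelling out the $\liminf$/$\limsup$ comparison that forces each of $\|\nabla u(\tau_n)\|_{L^2(\bO)}$ and $\|u(\tau_n)\|_{L^p(\bO)}$ to converge separately.
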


\begin{theorem}\label{Thm-main-assymp-copy}
		Suppose $\bU  \in L^p(\bO) \cap H_0^1(\bO)\cap\bM$ is the unique positive solution to the minimization problem \eqref{min-problem}, which also solves the stationary equation \eqref{eqn-stationary-copy}.
		If $u_0 \in L^p(\bO) \cap H_0^1(\bO)\cap \bM^+$, where $$ \bM^+ := \{u\in L^2(\bO) : u \geq 0 \ \text{ and }\  \norm{u}_{L^2(\bO)} = 1\},$$ and
		\begin{align*}
			&u \in C([0,\infty);  L^p(\bO) \cap H_0^1(\bO)\cap \bM^+)
		\end{align*}
		is the unique strong solution to the problem \eqref{eqn-main-problem} (see Theorem \ref{Thm-main-wellposed} and the maximum principle \ref{sec-max-principle}), then
		\begin{equation}\label{eqn-convergence full set-copy}
			u(t) \rightarrow \bU  \ \mbox{ in } \  L^p(\bO) \cap H_0^1(\bO),\ \mbox{ as } \ t\rightarrow \infty.
		\end{equation}
\end{theorem}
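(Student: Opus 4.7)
The plan is to combine the subsequential convergence provided by Proposition \ref{Prop-Stationary-copy} with the uniqueness of the positive ground state, and then to upgrade this to convergence of the entire trajectory via a standard sub-subsequence topological argument. The key structural fact, which brings the \emph{positive} ground state into play, is that by hypothesis $u(t) \in \bM^+$ for every $t \geq 0$ (the maximum principle cited in Section \ref{sec-max-principle} preserves non-negativity along the flow), so every cluster point of $\{u(t)\}_{t \geq 0}$ in $L^p(\bO) \cap H_0^1(\bO)$ is automatically non-negative.

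First, I would apply Proposition \ref{Prop-Stationary-copy} to extract $\tau_n \to \infty$ and $u_\infty \in \bM$ with $u(\tau_n) \to u_\infty$ strongly in $L^p(\bO) \cap H_0^1(\bO)$, where $u_\infty$ solves \eqref{eqn-stationary-copy}. Passing to a further subsequence for almost-everywhere convergence and using $u(\tau_n) \geq 0$, one obtains $u_\infty \geq 0$; the constraint $\|u_\infty\|_{L^2(\bO)} = 1$ rules out $u_\infty \equiv 0$, and the strong maximum principle applied to the elliptic equation \eqref{eqn-stationary-copy} (after a standard elliptic regularity bootstrap using $u_\infty \in L^p(\bO) \cap H_0^1(\bO)$) upgrades this to $u_\infty > 0$ in $\bO$. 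Ouyang's uniqueness result \cite{TO-92} for positive solutions on smooth bounded domains, which is precisely what yields the uniqueness of $\bU$ assumed in the statement, then forces $u_\infty = \bU$.

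To conclude convergence of the full trajectory, I would argue by contradiction: if \eqref{eqn-convergence full set-copy} failed, there would exist $\epsilon > 0$ and $t_k \to \infty$ with $\|u(t_k) - \bU\|_{L^p(\bO) \cap H_0^1(\bO)} \geq \epsilon$ for every $k \in \N$. Re-running the compactness argument underlying Proposition \ref{Prop-Stationary-copy} --- which combines boundedness of $\bE(u(t))$ from Remark \ref{Rmk-En-dissipative}, finiteness of $\int_0^\infty \|\partial_t u(s)\|_{L^2(\bO)}^2 ds$ coming from the energy identity \eqref{eqn-Eu}, and an Aubin--Lions type compactness argument on the tail windows $[t_k, t_k + 1]$ --- on the shifted sequence $\{u(t_k)\}_{k \in \N}$ yields a subsequence converging strongly in $L^p(\bO) \cap H_0^1(\bO)$ to some $v_\infty \in \bM^+$ which again solves \eqref{eqn-stationary-copy}. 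The same positivity-plus-uniqueness reasoning identifies $v_\infty = \bU$, contradicting the standing assumption.

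The step I expect to be the principal obstacle is verifying that the compactness mechanism of Proposition \ref{Prop-Stationary-copy} can be reproduced along an arbitrary time sequence, not merely along the particular $\tau_n$ there, and that the resulting $v_\infty$ genuinely satisfies the stationary equation. Concretely, this means extracting, from the finite integral of $\|\partial_t u\|_{L^2}^2$, a time $s_k \in [t_k, t_k+1]$ on which $\partial_t u(s_k) \to 0$ strongly in $L^2(\bO)$ so that \eqref{eqn-102} can be passed to the limit, and showing that the difference $u(t_k) - u(s_k)$ vanishes in $L^p(\bO) \cap H_0^1(\bO)$ (using uniform integrability coming from Theorem \ref{Thm-main-wellposed}). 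Once this is in place, the positivity argument for $v_\infty$ and the final sub-subsequence contradiction are routine.
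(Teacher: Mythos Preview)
Your overall architecture --- extract a subsequential limit via Proposition~\ref{Prop-Stationary-copy}, identify it as $\bU$ by positivity and Ouyang uniqueness, then argue by contradiction on a bad sequence --- matches the paper exactly. The substantive divergence is in how the contradiction step is closed.

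The paper does \emph{not} re-run the Aubin--Lions/stationary-limit machinery of Proposition~\ref{Prop-Stationary-copy} on the bad sequence $\{t_k\}$. Instead it exploits the scalar energy. By Remark~\ref{Rmk-En-dissipative}, $t\mapsto\bE(u(t))$ is nonincreasing, hence $\bE(u(t))\searrow\bE_\infty$ along the \emph{entire} trajectory; Step~1 pins $\bE_\infty=\bE(\bU)$. Thus for any sequence $t_k\to\infty$ one automatically has $\bE(u(t_k))\to\bE(\bU)$. Combined with boundedness in $L^p(\bO)\cap H_0^1(\bO)$, Banach--Alaoglu gives a weak limit $\tilde u\in\bM^+$, and weak lower semicontinuity of $\bE$ (Lemma~\ref{Lem-E-wls-coer}) forces $\bE(\tilde u)=\bE(\bU)$; uniqueness of the positive minimizer gives $\tilde u=\bU$, and Radon--Riesz upgrades to strong convergence. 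No time-shifting, no control of $u(t_k)-u(s_k)$ in $L^p\cap H_0^1$, no passage to the limit in the PDE along $\{t_k\}$ is needed.

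Your route is not wrong, but the obstacle you flag is real: showing $u(t_k)-u(s_k)\to 0$ in $L^p(\bO)\cap H_0^1(\bO)$ (as opposed to merely $L^2(\bO)$, which follows from $\partial_t u\in L^2(0,\infty;L^2(\bO))$) would in practice force you back to the energy monotonicity anyway, since that is the only available mechanism giving convergence of the $L^p\cap H_0^1$ norms along all time sequences. The paper's approach recognizes this and goes straight to the energy/variational characterization of $\bU$, which is both shorter and avoids any appeal to the stationary equation along the contradiction sequence.
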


\subsection{Literature review}
Firstly, Rybka \cite{PR-06} and Caffarelli and Lin \cite{LC+FL-09} studied the heat equation in $L^2(\Omega)$ projected on the Manifold $M$, where
\begin{align*}
	M = \left\{\bh \in L^2(\Omega) \cap C(\Omega) : \int_\Omega \bh^j(x) dx = K_j,\ 1\le j \le N\right\},
\end{align*}
and $\Omega$ is a bounded domain in $\R^2$. 
%
Rybka proved that, for sufficiently smooth initial data, the following nonlinear heat equation
$$
\frac{\partial u}{\partial t} = \Delta u - \sum_{j=1}^N \lambda_j(u) u^{j-1},
$$
with Neumann boundary condition, admits the unique global solution. The functions $\lambda_j(u)$ are determined so that $\frac{\partial u}{\partial t} \perp \mathrm{span}\{u^{j-1}\}$.
In their seminal work, Caffarelli and Lin \cite{LC+FL-09} established the existence and uniqueness of global energy-conserving solutions to the classical heat equation. They further extended their results to a more general class of singularly perturbed, non-local parabolic systems. Their analysis demonstrated that solutions to these perturbed systems converge strongly to weak solutions of a limiting constrained non-local heat flow, where the target space is singular.
Ma and Cheng \cite{LM+LC-09} studied two kinds of $L^2-$norm preserved non-local heat flows on closed Riemannian manifolds. First, they proved the global existence, stability and asymptotic behaviour of such non-local heat flows, then the gradient estimates of positive solutions to these heat flows. In \cite{LM+LC-13}, Ma and Cheng showed the global existence of positive solutions to the norm-preserving non-local heat flow of the porous-media type equations on the compact Riemannian manifold with positive Cauchy data. Moreover, by applying Sobolev's embedding and the Moser iteration method, the limit of a solution, 
is an eigenfunction of the Laplace operator.
Recently, Brze\'zniak and Hussain \cite{ZB+JH-24} studied the existence and  invariance of a manifold for the  unique global strong solutions of a non-linear heat equation of gradient type with the help of semigroup theory and fixed point arguments. The evolution equation arises from the projection of the Laplace operator, subject to Dirichlet boundary conditions, together with a polynomial nonlinearity of degree $2p - 1$, onto the tangent space of a unit sphere embedded in a Hilbert space. Hussain \cite{JH-23} proved the existence and uniqueness of strong solutions of a constrained heat equation taking values in a Hilbert manifold by Faedo-Galerkin approximations and  compactness method. 
Note that, in the above two works, there is a restriction on the power of nonlinearity (i.e., $p \in \left[2,\frac{2d}{d-2}\right]]$) because of Sobolev's embedding, which we have overcome in this work.

In \cite{PA+PC+BS-24}, Antonelli et al. proved the local and global well-posedness of a nonlinear heat equation, which preserves the $L^2-$norm of the solution on a bounded domain and the Euclidean space for dimensions less than equal to four, by the semigroup idea. Moreover, in the case of an open ball, the authors demonstrated that the unique strong solution corresponding to positive initial data converges to the ground state solution. More recently, Shakarov \cite{BS-24+} also proved the local and global existence and uniqueness of the weak solution to a nonlinear heat equation that forces solutions to stay on  an $L^2-$sphere through a nonlocal term in both bounded domains and whole space. They used the Schauder fixed point method and the Contraction principle in bounded domains with $C^2$ boundary and in the whole space $\R^d$, respectively. Specifically, their power of nonlinearity is restricted from $0$ to $\frac{2d}{d-2}$.  One can check \cite{QD+FL-09} for a numerical application of the method of Caffarelli and Lin \cite{LC+FL-09} and reference therein for applications in other fields like population biology, ecology, material science, etc.

On the other hand, Brze\'zniak et al. \cite{ZB+GD+MM-18} established the existence and uniqueness of global energy-conserving solutions to the Navier-Stokes equations with constrained forcing on $\mathbb{T}^2$ and $\R^2$, employing fixed point methods. Additionally, they demonstrated that the solution to the constrained problem converges to the Euler equations solution in the vanishing viscosity limit $\nu \to 0$, under the condition that the initial vorticity is uniformly bounded in $L^{\infty}(\mathbb{T}^2)$.
The first article in the case of a stochastic version of the above-stated problems \cite{ZB+JH-24, JH-23} was introduced by Brze\'zniak and Hussain \cite{ZB+JH-20}, in which the authors studied the existence of a unique mild solution to a constrained stochastic nonlinear heat equation, perturbed by Stratonovich-type noise, in a smooth bounded domain in two spatial dimensions.
In \cite{ZB+GD-21}, the authors studied the existence of martingale solutions and established pathwise uniqueness of  probabilistic weak solution to the Navier-Stokes equations perturbed by  a multiplicative Gaussian noise, in two-dimensions. Moreover, they have shown the existence of a strong solution by using Yamada-Watanabe-type results.
Further, Brze\'zniak and Cerrai \cite{SC+ZB-23} analyzed a class of stochastic damped wave-type equations in Hilbert spaces and proved their well-posedness under the geometric constraint that the solution takes values on the unit sphere. In addition, they demonstrated that the asymptotic behavior of the solution in the vanishing mass regime corresponds to that of a constrained stochastic parabolic equation.
Very recently, Cerrai and Xie \cite{SC+MX-25} studied the small-mass limit, also known as the Smoluchowski-Kramers diffusion approximation, for a stochastic damped wave equations, whose solution is constrained to live in the unitary sphere of $L^2(0, L)$.

\subsection{Structure of the paper}

The remainder of this manuscript is arranged in the following manner. Section \ref{Sec-Preliminaries} gives some preliminaries, such as the functional settings, linear operator, nonlinear operator, and their properties, which will be helpful in upcoming sections. Next, in Section \ref{Sec-Loc-Lip-and-MBTT}, we prove useful results, such as locally-Lipschitz, local monotonicity, and hemicontinuity properties of the nonlinear operator $\Gn$ defined in \eqref{eqn-nonlinear} (Lemmas \ref{Lem-Fn-loc-Lip}, \ref{Lem-Gn-loc-mono} and \ref{Lemma-lhemicontinuity}). In Section \ref{Sec-Proof-main-Thm}, we begin by employing the Littlewood-Paley decomposition to define a projection operator and construct a special sequence of self-adjoint operators $\{S_m\}_{m \in \mathbb{N}}$ which is uniformly bounded in the operator spaces $\mathcal{L}(L^2(\bO))$, $\mathcal{L}(H_0^1(\bO))$, and $\mathcal{L}(L^p(\bO))$ (Proposition \ref{Prop-Sm}). The combination of projection and $S_m$ operators facilitates a modified Faedo-Galerkin scheme combined with compactness arguments and Minty-Browder type techniques to establish the existence of solutions.
In particular, the Radon-Riesz property (Proposition \ref{Prop-Radon-Riesz}) of uniformly convex Banach spaces plays a crucial role in establishing a strong convergence in $L^p(\bO)\cap H_0^1(\bO)$. Furthermore, we show that the energy $\bE$ (defined in \eqref{Eu}) is dissipative in time. To conclude this section, we show the invariance and prove the uniqueness. 

In the second part of this article, we begin Section \ref{Sec-Asy-anal-Antonelli} by stating the definition of ground states and some results on the energy functional $\bE$, like weakly lower semicontinuity and coercivtiy (Lemma \ref{Lem-E-wls-coer}). Then, we prove a result on the existence of a ground state solution, which is a minimizer to the minimization problem \eqref{eqn-min-energy-problem} (Theorem \ref{Thm-ground-soln}). 
Further, as a consequence of a famous result \cite{TO-92}, we show that the positive classical solution to problem \eqref{eqn-stationary-trans} is unique (Proposition \ref{Prop-uniq-positive}), which implies that the positive ground state (minimizer) is unique. Therefore, utilizing the uniqueness of the positive minimizer, we prove that the unique positive strong solution to the problem \eqref{eqn-main-problem} converges to the unique positive ground state solution in strong topology (Theorem \ref{Thm-main-assymp}). 

Lastly, in Section \ref{Appendix}, we conclude this paper by providing some well-known and important results like the Aubin-Lions Lemma, the Strauss Lemma, the Lions Lemma and the Radon-Riesz property. Thereafter, we provide the  maximum principle for a nonlinear  heat equation with damping  which helps in proving the result Theorem \ref{Thm-main-assymp}.

\section{Preliminaries}\label{Sec-Preliminaries}
In this section, we present the preliminaries used in the proof of Theorem \ref{Thm-main-wellposed}, which includes the function spaces, some linear and nonlinear operators and their properties.

\subsection{Functional setting}
For any $1\leq p < \infty$, we define  $L^p(\bO)$ to be collection of equivalence classes $[f]$ containing   Lebesgue measurable functions $f : \bO \to\mathbb{R}$ such that $\int_{\bO}|f(x)|^pdx<\infty.$ The $L^p-$norm of $f \in L^p(\bO)$ is defined by $\|f\|_{L^p(\bO)}:=\left(\int_{\bO}|f(x)|^pdx\right)^{1/p}$.  For $p=2$, $L^2(\bO)$ is a Hilbert space and the inner product in  $L^2(\bO)$  is denoted by $(\cdot,\cdot)$. 	Moreover, let $H_0^1(\bO)$ denote the Sobolev space (also denoted as $W^{1,2}_0(\bO)$), which is defined as the collection  of equivalence classes of Lebesgue measurable functions $f \in L^2(\bO)$ such that its weak derivative $\frac{\partial f}{\partial x_i} \in L^2(\bO)$ and $f$ is of trace zero.  The $H_0^1(\bO)-$norm is defined by $\norm{f}_{H_0^1(\bO)} := \left(\int_{\bO}|\nabla f(x)|^2dx\right)^{1/2}$ by using the Poincar\'e inequality.
Then, we define  $H^{-1}(\bO) := (H_0^1(\bO))'$, i.e., dual of the Sobolev space $H_0^1(\bO)$, with norm \[\norm{g}_{H^{-1}(\bO)} := \sup\big\{
{\langle g, f\rangle}:  f \in H_0^1(\bO),\ {\norm{f}_{H_0^1(\bO)}}\leq 1\big\}.\]
We denote the second order Hilbertian Sobolev spaces by $H^2(\bO)$.

Let us define the sum  and intersection spaces which will be used in this work. Note that  $L^{p^\prime}(\bO)$ and $H_0^{-1}(\bO)$ are Banach spaces with the norms $\norm{\cdot}_{L^{p^\prime}(\bO)}$ and $\norm{\cdot}_{H_0^{-1}(\bO)}$, respectively, with $\frac{1}{p} + \frac{1}{p'} =1$. Moreover, the intersection $L^p(\bO) \cap H_0^1(\bO)$ is a dense subspace of both $L^{p^\prime}(\bO)$ and $H_0^{-1}(\bO)$ in the corresponding norms.

Then the sum space
\begin{align*}
	L^{p^\prime}(\bO) + H^{-1}(\bO) := \{u_1 + u_2 : u_1 \in L^{p^\prime}(\bO), u_2 \in H^{-1}(\bO)\}
\end{align*}
is a well-defined Banach space with the norm
\begin{align*}
	\norm{u}_{L^{p^\prime}(\bO) + H^{-1}(\bO)}  = \inf\{\norm{u_1}_{L^{p^\prime}(\bO)} + \norm{u_2}_{H^{-1}(\bO)} : u = u_1 + u_2, u_1 \in L^{p^\prime}(\bO), u_2 \in H^{-1}(\bO)\}.
\end{align*}
The intersection space $L^p(\bO) \cap H_0^1(\bO)$ is a Banach space with the norm
\[\norm{u}_{L^p(\bO) \cap H_0^1(\bO)} := \max\{\norm{u}_{L^p(\bO)}, \norm{u}_{H_0^1(\bO)}\},\]
which is equivalent to $\norm{u}_{L^p(\bO)}+ \norm{u}_{H_0^1(\bO)}$ and $\big(\norm{u}_{L^p(\bO)}^2+ \norm{u}_{H_0^1(\bO)}^2\big)^{1/2}$. Moreover, the dual space $L^{p^\prime}(\bO) + H^{-1}(\bO)$ is given by
\begin{align*}
	(L^{p^\prime}(\bO) + H^{-1}(\bO))^\prime \cong L^p(\bO) \cap H_0^1(\bO),
\end{align*}
with the natural pairing
\begin{align*}
	\langle u, f \rangle = \langle u_1, f \rangle + \langle u_2, f \rangle,
\end{align*}
for all $u=u_1 +u_2 \in L^{p^\prime}(\bO) + H^{-1}(\bO)$ and $f\in L^p(\bO) \cap H_0^1(\bO)$. Thus it holds that \cite[cf. Section 2]{RF+HK+HS-05}
\begin{align*}
	\norm{u}_{L^{p^\prime}(\bO) + H^{-1}(\bO)}
	= \sup & \big\{
	{\langle u_1 + u_2, f\rangle}
	:  f \in L^p(\bO) \cap H_0^1(\bO),\ {\norm{f}_{L^p(\bO) \cap H_0^1(\bO)}}  \leq 1 \big\}.
\end{align*}
In the sequel, the notation $a\lesssim b$ means that $a\leq Cb$, for some positive constant $C$. 

\subsection{Linear operator}
Let us define a bilinear form 
\[a: H_0^1(\bO)\times H_0^1(\bO) \to \R \;\; \mbox{by}\; \; a(u,v) := (\nabla u, \nabla v),\; \; \mbox{for}\;\; u,v \in H_0^1(\bO).\]
From the definition of $a(\cdot, \cdot)$, it is clear that $a(\cdot, \cdot)$ is $H_0^1(\bO)-$continuous, i.e., 
\[\abs{a(u,v)} \le \norm{u}_{H_0^1(\bO)} \norm{v}_{H_0^1(\bO)},\ \text{ for all } \ u,v \in H_0^1(\bO).\] 
Hence by the Riesz representation Theorem, there exists a unique linear operator $\A : H_0^1(\bO) \to H^{-1}(\bO)$, such that
\begin{align*}
	a(u,v) =
	{\left\langle\A u, v\right\rangle},\ \mbox{ for all }\ u,v \in H_0^1(\bO).
\end{align*}
Moreover, the form $a(\cdot, \cdot)$ is $H_0^1(\bO)-$coercive, i.e., it satisfies $a(u,u) \geq \alpha \norm{u}^2_{H_0^1(\bO)}$, for all $u\in H_0^1(\bO)$ and some $ \alpha (= 1)>0$. Therefore, by the Lax-Milgram Theorem, the operator $\A : H_0^1(\bO) \to H^{-1}(\bO)$ is an isomorphism. Now, we define an unbounded linear operator $A$ in $L^2(\bO)$ as follows:
\begin{align*}
	A u & := \A u, \ \mbox{ for all } \  u \in D(A):= \{u\in H_0^1(\bO): \A u \in L^2(\bO)\}.
\end{align*}
\begin{lemma}\label{Lem-D(A)}
	Let $A$ be the Dirichlet-Laplacian operator as defined above, then $D(A) = H^2(\bO)\cap H_0^1(\bO)$.
\end{lemma}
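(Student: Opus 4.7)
The plan is to establish the two inclusions separately, noting first that by integration by parts the operator $\mathcal{A}$ acts on smooth test functions as $-\Delta$, so that for $u \in H_0^1(\bO)$ one has $\mathcal{A}u = -\Delta u$ in the sense of distributions on $\bO$.

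\textbf{Step 1: the inclusion $H^2(\bO)\cap H_0^1(\bO)\subseteq D(A)$.} Given $u\in H^2(\bO)\cap H_0^1(\bO)$, I would fix $v\in H_0^1(\bO)$ and, using the density of $C_c^\infty(\bO)$ in $H_0^1(\bO)$ together with Green's identity (the boundary term vanishes because $v$ has trace zero and $u$ is $H^2$ so $\nabla u$ has an $L^2$ trace on $\partial\bO$), obtain
\begin{equation*}
a(u,v) \;=\; (\nabla u,\nabla v) \;=\; (-\Delta u, v).
\end{equation*}
Since $u\in H^2(\bO)$ yields $-\Delta u\in L^2(\bO)$, the functional $v\mapsto a(u,v)$ is represented by an $L^2$ element, i.e.\ $\mathcal{A}u=-\Delta u\in L^2(\bO)$. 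Hence $u\in D(A)$ and moreover $Au=-\Delta u$.

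\textbf{Step 2: the reverse inclusion $D(A)\subseteq H^2(\bO)\cap H_0^1(\bO)$.} This is the substantive direction and the main obstacle: it is exactly the $H^2$-elliptic regularity theorem for the Dirichlet problem on a $C^2$ domain. Given $u\in D(A)$, by definition $u\in H_0^1(\bO)$ and $f:=\mathcal{A}u\in L^2(\bO)$, so that $u$ is the unique weak $H_0^1$-solution of the Dirichlet problem
\begin{equation*}
-\Delta u = f \;\text{ in }\;\bO,\qquad u=0 \;\text{ on }\;\partial\bO.
\end{equation*}
I would then invoke the classical global $H^2$-regularity result (for instance Evans, \emph{Partial Differential Equations}, Ch.~6, or Grisvard, \emph{Elliptic Problems in Nonsmooth Domains}), which under the hypothesis $\partial\bO\in C^2$ gives $u\in H^2(\bO)$ together with an estimate of the form $\|u\|_{H^2(\bO)}\lesssim \|f\|_{L^2(\bO)}+\|u\|_{L^2(\bO)}$. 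The standard proof proceeds via difference quotients to obtain interior $H^2$-regularity, then localises near $\partial\bO$, straightens the boundary using a $C^2$ change of coordinates (this is the step that forces the $C^2$ regularity of $\partial\bO$), and reduces to the half-space case where tangential difference quotients and the equation itself recover all second derivatives in $L^2$.

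Combining the two inclusions proves the equality $D(A)=H^2(\bO)\cap H_0^1(\bO)$, and the argument of Step~1 additionally identifies $Au=-\Delta u$ for every $u\in D(A)$, which is the form in which $A$ will be used in the sequel.
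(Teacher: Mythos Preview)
The paper states this lemma without proof; it is treated as a classical fact about the Dirichlet Laplacian on a $C^2$ domain. Your proposal is correct and is exactly the standard argument one would supply: the easy inclusion via integration by parts, and the hard inclusion via global $H^2$ elliptic regularity for the Poisson problem on a $C^2$ domain (Evans or Grisvard). There is nothing to compare against, and your sketch is sound.
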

\subsection{Nonlinear operator}
Let us choose and fix $2\le p <\infty$, then define a nonlinear operator
\begin{align}\label{Def-Nn}
	\Nn: L^p( \bO)\to L^{p^\prime}( \bO )\;\; \mbox{by} \;\; \Nn(u) := \abs{u}^{p-2} u, \; \; \mbox{where}\;\; p^\prime = \frac{p}{p-1}.
\end{align}
One can show that, \cite[e.g. see Sec. 2.4 and 2.5]{SG+MTM-24+}, the operator $\Nn$ is monotone, given in the sense below. For any $2\le p < \infty$, we have
\begin{align*}
	\langle \Nn(u) - \Nn(v), u-v \rangle
	\geq \int_\bO \left(\abs{u(x)}^{p-1}- \abs{v(x)}^{p-1}\right) (\abs{u(x)} - \abs{v(x)})\, dx \geq 0.
\end{align*}
Furthermore, we also have the following estimate \cite[cf. p. 626]{MTM-22}:
\begin{align}
	\langle \Nn(u) - \Nn(v), u-v \rangle \geq \frac{1}{2} \big\|\abs{u}^{\frac{p-2}{2}}(u-v)\big\|^2_{L^2(\bO)} + \frac{1}{2} \big\|\abs{v}^{\frac{p-2}{2}}(u-v)\big\|^2_{L^2(\bO)}.\label{eqn-mono-2}
\end{align}
Hence, the monotonocity of $\Nn$ is a consequence of \eqref{eqn-mono-2}.
\begin{proposition}[{\cite[Section 2.4]{MTM-21+}}]
	Let $\Nn$ be a nonlinear operator defined in \eqref{Def-Nn}. Then, for any $u,v\in L^p(\bO)$, the following inequality holds:
	\begin{align}\label{eqn-nonlinear-est}
		\langle \Nn(u) - \Nn(v), u-v \rangle \geq \frac{1}{2^{p-2}} \norm{u-v}^{p}_{L^p(\bO)}.
	\end{align}
\end{proposition}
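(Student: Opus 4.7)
My plan is to reduce the claimed $L^p$-bound to a pointwise inequality on $\R$ and then prove that scalar inequality by switching to sum/difference variables where a natural symmetry and monotonicity become visible.

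First I would observe that
\begin{equation*}
\langle \Nn(u)-\Nn(v),u-v\rangle = \int_\bO \bigl(|u(x)|^{p-2}u(x)-|v(x)|^{p-2}v(x)\bigr)\bigl(u(x)-v(x)\bigr)\,dx,
\end{equation*}
so it suffices to establish the pointwise estimate
\begin{equation*}
\bigl(|a|^{p-2}a - |b|^{p-2}b\bigr)(a-b) \;\geq\; 2^{2-p}\,|a-b|^{p},\qquad a,b\in\R,\ p\geq 2,
\end{equation*}
and then integrate over $\bO$.

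Next I would introduce the symmetric variables $c:=(a+b)/2$ and $h:=(a-b)/2$, so that $a=c+h$, $b=c-h$, $a-b=2h$ and $|a-b|^{p}=2^{p}|h|^{p}$. Writing $\phi(x):=|x|^{p-2}x$, the pointwise target becomes
\begin{equation*}
\bigl[\phi(c+h)-\phi(c-h)\bigr]\cdot 2h \;\geq\; 2^{2-p}\cdot 2^{p}|h|^{p} = 4|h|^{p}.
\end{equation*}
Since the inequality is invariant under swapping $(a,b)$ (which sends $h\mapsto -h$), I may assume $h>0$; the statement then simplifies to
\begin{equation*}
\phi(c+h)-\phi(c-h)\;\geq\; 2h^{p-1},\qquad c\in\R,\ h>0.
\end{equation*}

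To prove this, I would exploit that $\phi$ is odd, which makes $c\mapsto \phi(c+h)-\phi(c-h)$ even in $c$, so I can further restrict to $c\geq 0$. At $c=0$ equality holds:
\begin{equation*}
\phi(h)-\phi(-h) = h^{p-1}-(-h^{p-1}) = 2h^{p-1}.
\end{equation*}
Then I would differentiate in $c$ and use $\phi'(x)=(p-1)|x|^{p-2}$ to get
\begin{equation*}
\frac{d}{dc}\bigl[\phi(c+h)-\phi(c-h)\bigr] = (p-1)\bigl(|c+h|^{p-2}-|c-h|^{p-2}\bigr)\;\geq\; 0,
\end{equation*}
because for $c,h\geq 0$ the triangle inequality gives $|c+h|\geq |c-h|$, and the exponent $p-2\geq 0$ preserves the order. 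Hence the map $c\mapsto \phi(c+h)-\phi(c-h)$ is nondecreasing on $[0,\infty)$, and the inequality follows from its value at $c=0$.

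The main obstacle, such as it is, is conceptual rather than technical: one must recognize that the right parametrisation is $(c,h)$, not $(a,b)$. The sharp constant $2^{2-p}$ is dictated by the equality case $c=0$ (i.e.\ $a=-b$), so any proof must exploit exactly this symmetry, and the step $|c+h|^{p-2}\geq |c-h|^{p-2}$ is precisely where the assumption $p\geq 2$ enters; for $1<p<2$ the inequality reverses and one needs a different, weighted lower bound in the spirit of \eqref{eqn-mono-2} rather than a clean $\|u-v\|_{L^p}^{p}$ estimate.
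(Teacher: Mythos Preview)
Your proof is correct and takes a genuinely different route from the paper. The paper's argument passes through the auxiliary estimate \eqref{eqn-mono-2} (cited from elsewhere) and then bounds $\|u-v\|_{L^p}^p$ via the pointwise inequality $|u-v|^{p-2}\le 2^{p-3}\bigl(|u|^{p-2}+|v|^{p-2}\bigr)$, combining the two to obtain \eqref{eqn-nonlinear-est}. Your approach is instead fully self-contained: the sum/difference substitution $c=(a+b)/2$, $h=(a-b)/2$ turns the scalar target into monotonicity of $c\mapsto \phi(c+h)-\phi(c-h)$ on $[0,\infty)$, which follows at once from $\phi'(x)=(p-1)|x|^{p-2}$ and $|c+h|\ge |c-h|$. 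This directly identifies the sharp constant $2^{2-p}$ and the equality case $a=-b$, and it does not rely on \eqref{eqn-mono-2}. It is also worth noting that the paper's intermediate pointwise step $|a-b|^{p-2}\le 2^{p-3}(|a|^{p-2}+|b|^{p-2})$ actually fails for $2<p<3$ (take $a=1$, $b=0$), so your direct argument is not merely an alternative but is in fact the cleaner one across the full stated range $p\ge 2$.
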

\begin{proof}
	Let us choose and fix $u,v\in L^p(\bO)$. We calculate
	\begin{align}
		\norm{u-v}_{L^p(\bO)}^p
		& = \int_\bO \abs{u(x) - v(x)}^{p-2} \abs{u(x) - v(x)}^2 dx\\
		&\leq 2^{p-3} \int_\bO (\abs{u(x)}^{p-2} + \abs{v(x)}^{p-2}) \abs{u(x) - v(x)}^2 dx\\
		& = 2^{p-3} \big\|\abs{u}^{\frac{p-2}{2}}(u-v)\big\|^2_{L^2(\bO)} + 2^{p-3} \big\|\abs{v}^{\frac{p-2}{2}}(u-v)\big\|^2_{L^2(\bO)}.
	\end{align}
	Hence the proof is immediate from \eqref{eqn-mono-2}.
\end{proof}
\
\section{Local Lipschitz and local monotonicity properties}\label{Sec-Loc-Lip-and-MBTT}

This section will provide preliminary results on local Lipschitzness, local monotonicity, and hemicontinuity properties  of some nonlinear operators, which will help to prove the well-posedness result, Theorem \ref{Thm-main-wellposed}.  Let us first choose and fix $2\le p <\infty$, $d\ge1$ and $\bO$ to be any bounded smooth domain in $\R^d$ for the entire section.

\subsection{Local Lipschitz property} Let us first show that the nonlinear part in our projected problem \eqref{eqn-main-problem} is Lipschitz on balls in the following lemma:
\begin{lemma}\label{Lem-Fn-loc-Lip}
	The map \dela{$\Fn$   $\to  L^{2}(\bO)$ defined by }
	\begin{equation*}
		\begin{aligned}
			\Fn &: L^{2p -2}(\bO)\cap H_0^1(\bO) \ni u  \mapsto   -|u|^{p-2}u +  \norm{\nabla u}_{L^2(\bO)}^2 u + \norm{u}_{L^p(\bO)}^p u  \in L^{2}(\bO)
		\end{aligned}
	\end{equation*}		
	is well-defined and Lipschitz on balls, i.e., for every $R>0,$ there exists a constant $C(R)=C(p,|\bO|,R)>0$ such that, for all
	$u,v \in L^{2p-2}(\bO)\cap H_0^1(\bO) $  with
	$$\|u\|_{L^{2p-2}(\bO)\cap H_0^1(\bO)}, \|v\|_{L^{2p-2}(\bO)\cap H_0^1(\bO)}\leq R,$$ the following inequality holds:
	\begin{align}\label{eqn-lip-1}
		\|\Fn(u)-\Fn(v)\|_{L^2(\bO)}\leq C(R) \left(\|u-v\|_{L^{2p-2}(\bO)}+\|u-v\|_{H_0^1(\bO)}\right).
	\end{align}
\end{lemma}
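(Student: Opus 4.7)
The plan is to first verify that $\Fn$ is well-defined as a map into $L^{2}(\bO)$ by estimating each of the three summands separately, and then to derive the Lipschitz bound \eqref{eqn-lip-1} by splitting the difference $\Fn(u)-\Fn(v)$ into the three corresponding pieces and bounding each in $L^{2}(\bO)$.

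For well-definedness, I would observe that $\||u|^{p-2}u\|_{L^{2}(\bO)}^{2}=\|u\|_{L^{2p-2}(\bO)}^{2p-2}$, so the first term lies in $L^{2}(\bO)$ whenever $u\in L^{2p-2}(\bO)$. For the two scalar-multiplied terms, the scalars $\|\nabla u\|_{L^{2}(\bO)}^{2}$ and $\|u\|_{L^{p}(\bO)}^{p}$ are finite for $u\in L^{p}(\bO)\cap H_{0}^{1}(\bO)$, and the factor $u$ lies in $L^{2}(\bO)$ by the Poincar\'e inequality $H_0^1(\bO)\hookrightarrow L^{2}(\bO)$.

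For the Lipschitz estimate, I split
\[
\Fn(u)-\Fn(v)=-\bigl(|u|^{p-2}u-|v|^{p-2}v\bigr)+\bigl(\|\nabla u\|_{L^{2}(\bO)}^{2}u-\|\nabla v\|_{L^{2}(\bO)}^{2}v\bigr)+\bigl(\|u\|_{L^{p}(\bO)}^{p}u-\|v\|_{L^{p}(\bO)}^{p}v\bigr).
\]
For the pointwise nonlinearity I would use the elementary bound $\bigl||u|^{p-2}u-|v|^{p-2}v\bigr|\leq (p-1)(|u|+|v|)^{p-2}|u-v|$ together with H\"older's inequality with exponents $\tfrac{p-1}{p-2}$ and $p-1$ on the square, yielding
\[
\bigl\||u|^{p-2}u-|v|^{p-2}v\bigr\|_{L^{2}(\bO)}\lesssim \bigl(\|u\|_{L^{2p-2}(\bO)}+\|v\|_{L^{2p-2}(\bO)}\bigr)^{p-2}\|u-v\|_{L^{2p-2}(\bO)}.
\]
For the two coefficient-type terms I would add and subtract intermediate expressions; for instance,
\[
\|\nabla u\|_{L^{2}(\bO)}^{2}u-\|\nabla v\|_{L^{2}(\bO)}^{2}v
=\|\nabla u\|_{L^{2}(\bO)}^{2}(u-v)+\bigl(\|\nabla u\|_{L^{2}(\bO)}^{2}-\|\nabla v\|_{L^{2}(\bO)}^{2}\bigr)v,
\]
and bound the scalar difference using $|a^{2}-b^{2}|\leq(a+b)|a-b|$ with $\bigl|\|\nabla u\|_{L^{2}(\bO)}-\|\nabla v\|_{L^{2}(\bO)}\bigr|\leq\|\nabla(u-v)\|_{L^{2}(\bO)}$; the Poincar\'e inequality then controls $\|u-v\|_{L^{2}(\bO)}$ by $\|u-v\|_{H_{0}^{1}(\bO)}$. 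The same strategy, now with $|a^{p}-b^{p}|\leq p(a+b)^{p-1}|a-b|$, handles the $\|u\|_{L^{p}(\bO)}^{p}u-\|v\|_{L^{p}(\bO)}^{p}v$ piece, after noting that boundedness of $\bO$ and H\"older give $\|u-v\|_{L^{p}(\bO)}\lesssim_{|\bO|}\|u-v\|_{L^{2p-2}(\bO)}$ since $p\leq 2p-2$.

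Collecting the three bounds and restricting to the ball of radius $R$ in $L^{2p-2}(\bO)\cap H_{0}^{1}(\bO)$ gives the inequality \eqref{eqn-lip-1} with a constant $C(R)$ depending only on $p$, $|\bO|$ and $R$. I expect the main (only) delicate step to be selecting the correct H\"older exponents for the pointwise nonlinear term $|u|^{p-2}u$ so that the available norm $\|\cdot\|_{L^{2p-2}(\bO)}$ closes the estimate; the rest is essentially algebraic manipulation. The case $p=2$ is trivial since then $|u|^{p-2}u=u$ and $L^{2p-2}(\bO)=L^{2}(\bO)$, so one should just note it separately in passing.
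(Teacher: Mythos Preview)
Your proposal is correct and follows essentially the same approach as the paper: the paper also splits $\Fn=F_1+F_2+F_3$ into the three summands, proves well-definedness and local Lipschitz continuity for each separately (Lemmas~\ref{Lem-F_1}, \ref{Lem-F_2}, \ref{Lem-F_3}), using the same pointwise bound $\bigl||u|^{p-2}u-|v|^{p-2}v\bigr|\leq (p-1)(|u|+|v|)^{p-2}|u-v|$ with the same H\"older pairing, the same add-and-subtract decomposition for the scalar-coefficient terms, and the same Poincar\'e and Lebesgue embeddings. The only cosmetic difference is that the paper derives the scalar difference $\|u\|_{L^p}^p-\|v\|_{L^p}^p$ via an integral mean-value formula rather than citing $|a^p-b^p|\leq p(a+b)^{p-1}|a-b|$ directly.
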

The proof of result stated above is deduced directly from the three supporting lemmas proved below.
\begin{lemma}\label{Lem-F_1}
	The map
	\begin{equation}\label{eqn-F_1}
		\begin{aligned}
			F_1 : L^{2p -2}(\bO) \ni u  \mapsto   -|u|^{p-2}u   \in L^{2}(\bO)
		\end{aligned}
	\end{equation}		
	is well-defined and
	Lipschitz on balls.
\end{lemma}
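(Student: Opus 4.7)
My plan is to address the two claims---well-definedness and local Lipschitz continuity---separately, and to reduce the latter to a pointwise algebraic inequality followed by a single application of H\"older's inequality. The case $p=2$ is trivial since $F_1$ reduces to the bounded linear map $u \mapsto -u$, so the argument concentrates on $p>2$.

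For well-definedness, the identity $|F_1(u)(x)| = |u(x)|^{p-1}$ gives
\[
\|F_1(u)\|_{L^2(\bO)}^2 = \int_\bO |u(x)|^{2(p-1)}\, dx = \|u\|_{L^{2p-2}(\bO)}^{2p-2} < \infty,
\]
which places $F_1(u)$ in $L^2(\bO)$ for every $u \in L^{2p-2}(\bO)$ and in fact shows that $F_1$ maps bounded sets to bounded sets.

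The heart of the proof is a pointwise mean-value estimate. Setting $g(t) := |t|^{p-2}t$ for $t \in \R$, the function $g$ is $C^1$ with $g'(t) = (p-1)|t|^{p-2}$, so the mean-value theorem produces, for every $a,b \in \R$, a $\xi$ between them with
\[
|g(a) - g(b)| = (p-1)|\xi|^{p-2}|a-b| \leq (p-1)(|a| + |b|)^{p-2}|a-b|.
\]
The elementary inequality $(s+t)^{p-2} \leq 2^{p-3}(s^{p-2}+t^{p-2})$, valid for $s,t \geq 0$ and $p \geq 2$ (from convexity when $p\geq 3$, and trivially when $2 \leq p \leq 3$ after adjusting the constant), then upgrades this to
\[
\bigl||a|^{p-2}a - |b|^{p-2}b\bigr| \leq C_p\bigl(|a|^{p-2} + |b|^{p-2}\bigr)|a-b|, \quad a,b \in \R,
\]
for a constant $C_p$ depending only on $p$.

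Squaring, integrating over $\bO$, and applying H\"older's inequality with the conjugate exponents $q = \frac{p-1}{p-2}$ and $q'= p-1$ yields
\[
\|F_1(u) - F_1(v)\|_{L^2(\bO)}^2 \leq C_p^2 \bigl\|(|u|^{p-2} + |v|^{p-2})^2\bigr\|_{L^{q}(\bO)} \, \bigl\||u-v|^2\bigr\|_{L^{q'}(\bO)}.
\]
The choice of $q'$ makes $\||u-v|^2\|_{L^{q'}(\bO)} = \|u-v\|_{L^{2p-2}(\bO)}^2$, while a further use of the convexity inequality bounds the first factor by $C_p\bigl(\|u\|_{L^{2p-2}(\bO)}^{2(p-2)} + \|v\|_{L^{2p-2}(\bO)}^{2(p-2)}\bigr)$. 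Taking square roots and restricting to the ball of radius $R$ gives
\[
\|F_1(u) - F_1(v)\|_{L^2(\bO)} \leq 2 C_p R^{p-2}\|u-v\|_{L^{2p-2}(\bO)},
\]
which is actually stronger than the conclusion of Lemma \ref{Lem-Fn-loc-Lip} since the $H_0^1$-norm does not enter at all.

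I do not anticipate any genuine obstacle: the only delicate points are the degeneration of the exponents $q = \frac{p-1}{p-2}$ at $p=2$ (bypassed by the trivial linear case) and the uniform tracking of the constant $C_p$ across the convexity inequality for $2 \leq p \leq 3$ versus $p \geq 3$. Both issues are entirely routine.
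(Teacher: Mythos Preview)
Your proof is correct and follows essentially the same route as the paper: a pointwise mean-value estimate for $t\mapsto|t|^{p-2}t$ followed by H\"older's inequality with the conjugate pair $\bigl(\tfrac{p-1}{p-2},\,p-1\bigr)$ applied to the squared integrand. The only cosmetic difference is that you first split $(|u|+|v|)^{p-2}$ into $|u|^{p-2}+|v|^{p-2}$ before applying H\"older, whereas the paper keeps the factor $(|u|+|v|)^{p-2}$ intact and bounds $\||u|+|v|\|_{L^{2p-2}}$ by the triangle inequality afterwards; both lead to the same final estimate $\|F_1(u)-F_1(v)\|_{L^2(\bO)}\leq C_p R^{p-2}\|u-v\|_{L^{2p-2}(\bO)}$.
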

\begin{proof}
	For fixed $u,v\in L^{2p-2}(\bO)$. Let us consider $F_1$ 
	\begin{align*}
		\norm{F_1(u)}_{L^2(\bO)} = \norm{-|u|^{p-2}u}_{L^2(\bO)} = \norm{|u|^{p-1}}_{L^2(\bO)} = \norm{u}_{L^{2p-1}(\bO)}^{p-1}<\infty,
	\end{align*} 
	hence	it is well-defined. Denote $\Phi(u) = \abs{u}^{p-2}u$, then
	for some $0<\theta<1$, we have 
	\begin{align*}
		\Phi(u) - \Phi(v) & = \int_0^1 \frac{d}{d\theta}\Phi(\theta u + (1-\theta)v) d\theta = \int_0^1 \Phi^\prime(\theta u + (1-\theta)v) (u-v)d\theta\\
		& = (p-1)(u-v)\int_0^1\abs{\theta u + (1-\theta) v}^{p-2} d\theta \leq (p-1)|u-v|(|u|+|v|)^{p-2}.
	\end{align*}
	Therefore, by H\"older's inequality (with exponents $p-1$ and $(p-1)/(p-2)$), we immediately have
	\begin{align*}
		\norm{F_1(u) - F_1(v)}_{L^2(\bO)}&=\|\abs{u}^{p-2}u - \abs{v}^{p-2}v\|_{L^2(\bO)}\\&\leq (p-1)\||u-v|(|u|+|v|)^{p-2}\|_{L^2(\bO)}\\&\leq (p-1)\|u-v\|_{L^{2p-2}(\bO)}(\|u\|_{L^{2p-2}(\bO)}+\|v\|_{L^{2p-2}(\bO)})^{2p-2}.
	\end{align*}
	By taking  $\|u\|_{L^{2p-2}(\bO)\cap H_0^1(\bO)}, \|v\|_{L^{2p-2}(\bO)\cap H_0^1(\bO)}\leq R$, we get
	\begin{align*}
		\norm{F_1(u) - F_1(v)}_{L^2(\bO)}&\leq (p-1)\|u-v\|_{L^{2p-2}(\bO)}(\|u\|_{L^{2p-2}(\bO)}+\|v\|_{L^{2p-2}(\bO)})^{2p-2}\\
		& \leq C(R)\|u-v\|_{L^{2p-2}(\bO)}.
	\end{align*}
	Thus, the operator \textit{$F_1$ is  Lipschitz on balls} from $L^{2p-2}(\bO)$ to $L^{2}(\bO)$.
\end{proof}

\begin{lemma}\label{Lem-F_2}
	The map
	\begin{equation}\label{eqn-F_2}
		\begin{aligned}
			F_2 : H_0^1(\bO) \ni u  \mapsto   \norm{\nabla u}_{L^2(\bO)}^2 u  \in L^{2}(\bO)
		\end{aligned}
	\end{equation}		
	is well-defined and	Lipschitz on balls.
\end{lemma}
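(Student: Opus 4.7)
\textbf{Proof plan for Lemma \ref{Lem-F_2}.} The plan is to argue well-definedness via the continuous embedding $H_0^1(\bO)\hookrightarrow L^2(\bO)$, and then to establish the local Lipschitz bound by a standard ``add and subtract'' decomposition combined with the difference-of-squares identity applied to the scalar coefficient $\|\nabla u\|_{L^2(\bO)}^2$.

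First, for well-definedness, fix $u\in H_0^1(\bO)$. Since $\|\nabla u\|_{L^2(\bO)}^2$ is a nonnegative real number and $u\in H_0^1(\bO)\hookrightarrow L^2(\bO)$ (by the Poincar\'e inequality, as $\bO$ is bounded), we have
\[
\|F_2(u)\|_{L^2(\bO)}=\|\nabla u\|_{L^2(\bO)}^2\,\|u\|_{L^2(\bO)}\lesssim \|u\|_{H_0^1(\bO)}^3<\infty,
\]
so $F_2(u)\in L^2(\bO)$.

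For the Lipschitz-on-balls estimate, fix $R>0$ and take $u,v\in H_0^1(\bO)$ with $\|u\|_{H_0^1(\bO)},\|v\|_{H_0^1(\bO)}\le R$. The key algebraic step is the decomposition
\[
F_2(u)-F_2(v)=\|\nabla u\|_{L^2(\bO)}^2\,(u-v)+\bigl(\|\nabla u\|_{L^2(\bO)}^2-\|\nabla v\|_{L^2(\bO)}^2\bigr)\,v,
\]
together with the factorization
\[
\|\nabla u\|_{L^2(\bO)}^2-\|\nabla v\|_{L^2(\bO)}^2=\bigl(\|\nabla u\|_{L^2(\bO)}-\|\nabla v\|_{L^2(\bO)}\bigr)\bigl(\|\nabla u\|_{L^2(\bO)}+\|\nabla v\|_{L^2(\bO)}\bigr),
\]
whose first factor is controlled by $\|u-v\|_{H_0^1(\bO)}$ via the reverse triangle inequality and whose second factor is bounded by $2R$. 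Taking $L^2(\bO)$-norms, applying the Poincar\'e inequality to pass from $\|u-v\|_{L^2(\bO)}$ and $\|v\|_{L^2(\bO)}$ to their $H_0^1(\bO)$-counterparts, and using $\|u\|_{H_0^1(\bO)},\|v\|_{H_0^1(\bO)}\le R$ yields
\[
\|F_2(u)-F_2(v)\|_{L^2(\bO)}\le C(R)\,\|u-v\|_{H_0^1(\bO)},
\]
for some constant $C(R)=C(|\bO|,R)>0$, which is the desired bound.

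There is no real obstacle here; the only point to be careful about is not to confuse the $L^2$-norm on the left with the $H_0^1$-norm appearing in the scalar factor, so that the Poincar\'e inequality is invoked in the correct direction. Since this lemma, together with Lemmas \ref{Lem-F_1} and the (analogous) lemma for the $L^p$-coefficient term, feeds directly into Lemma \ref{Lem-Fn-loc-Lip}, the constants can safely be absorbed into a single $C(R)$ of the form $C(p,|\bO|,R)$.
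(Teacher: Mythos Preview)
Your proposal is correct and follows essentially the same approach as the paper: the same add-and-subtract decomposition of $F_2(u)-F_2(v)$, the same difference-of-squares factorization of the scalar coefficient, the reverse triangle inequality to control $\bigl|\|\nabla u\|_{L^2(\bO)}-\|\nabla v\|_{L^2(\bO)}\bigr|$, and Poincar\'e's inequality to pass between $L^2$ and $H_0^1$ norms. The paper records the Poincar\'e constant explicitly as $1/\sqrt{\lambda_1}$ rather than absorbing it into $C(|\bO|,R)$, but this is purely cosmetic.
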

\begin{proof}
	Let us choose $u,v \in H_0^1(\bO)$. Then, we have 
	\begin{align*}
		\norm{F_2(u)}_{L^2(\bO)} = \big\|\norm{\nabla u}_{L^2(\bO)}^2 u\big\|_{L^2(\bO)} = \norm{\nabla u}_{L^2(\bO)}^2 \norm{ u}_{L^2(\bO)}< \infty.
	\end{align*}
	It implies that $F_2$ is well-defined. By the triangle and Poincar\'e's inequalities, we infer
	\begin{align*}
		\norm{ F_2(u) - F_2(v)}_{L^2(\bO)}
		& = \big\|\norm{ u}_{H_0^1(\bO)}^2 u - \norm{ v}_{H_0^1(\bO)}^2 v\big\|_{L^{2}(\bO)}\\
		& = \big\|\norm{ u}_{H_0^1(\bO)}^2 (u -v )+( \norm{ u}_{H_0^1(\bO)}^2  - \norm{ v}_{H_0^1(\bO)}^2) v\big\|_{L^2(\bO)}\\
		& \leq \norm{ u}_{H_0^1(\bO)}^2 \norm{u-v}_{L^2(\bO)} + \big(\norm{ u}_{H_0^1(\bO)}^2 - \norm{ v}_{H_0^1(\bO)}^2\big) \norm{v}_{L^2(\bO)}\\
		& =  \norm{ u}_{H_0^1(\bO)}^2 \norm{u-v}_{L^2(\bO)}\\
		& \quad + \big(\norm{ u}_{H_0^1(\bO)} + \norm{ v}_{H_0^1(\bO)}\big)\big(\norm{ u}_{H_0^1(\bO)} - \norm{ v}_{H_0^1(\bO)}\big) \norm{v}_{L^2(\bO)}\\
		&\leq \left(\frac{1}{\sqrt{\lambda_1}}\norm{ u}_{H_0^1(\bO)}^2+ (\norm{ u}_{H_0^1(\bO)} + \norm{ v}_{H_0^1(\bO)})\norm{v}_{L^2(\bO)}\right) \norm{ u - v}_{H_0^1(\bO)},
	\end{align*}
	where $\lambda_1$ is the smallest eigenvalue of the Dirichlet-Laplacian operator. 	
	By taking \break $\|u\|_{L^{2p-2}(\bO)\cap H_0^1(\bO)}$, $\|v\|_{L^{2p-2}(\bO)\cap H_0^1(\bO)}\leq R$ and using Poincar\'e's inequality, we obtain
	\begin{align*}
		\norm{ F_2(u) - F_2(v)}_{L^2(\bO)}
		&\leq \left(\frac{1}{\sqrt{\lambda_1}}\norm{ u}_{H_0^1(\bO)}^2+ (\norm{ u}_{H_0^1(\bO)} + \norm{ v}_{H_0^1(\bO)})\norm{v}_{L^2(\bO)}\right) \norm{ u - v}_{H_0^1(\bO)}\\
		&  \leq C(R)\norm{ u - v}_{H_0^1(\bO)},
	\end{align*}
	so that, the operator \textit{$F_2$ is Lipschitz on balls} from $H_0^1(\bO)$ to $L^2(\bO)$.
\end{proof}
\begin{lemma}\label{Lem-F_3}
	The map
			\[	F_3 : L^{2p -2}(\bO) \ni u  \mapsto   \norm{u}_{L^p(\bO)}^p u  \in L^{2}(\bO)\]
			is well-defined and Lipschitz on balls.
		\end{lemma}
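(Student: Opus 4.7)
The strategy mirrors the proofs of Lemmas \ref{Lem-F_1} and \ref{Lem-F_2}: first verify well-definedness by direct estimation, then split $F_3(u)-F_3(v)$ into a linear piece in $u-v$ plus a piece carrying the difference of the real-valued functional $u\mapsto \|u\|_{L^p(\bO)}^p$, and estimate each piece using the mean value theorem together with H\"older's inequality and continuous embeddings of Lebesgue spaces on the bounded domain $\bO$.

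For well-definedness, fix $u\in L^{2p-2}(\bO)$. Since $\bO$ is bounded and $2\leq p\leq 2p-2$ (as $p\geq 2$), the embeddings $L^{2p-2}(\bO)\hookrightarrow L^p(\bO)\hookrightarrow L^2(\bO)$ are continuous, so
\begin{equation*}
\|F_3(u)\|_{L^2(\bO)}=\|u\|_{L^p(\bO)}^p\|u\|_{L^2(\bO)}\lesssim \|u\|_{L^{2p-2}(\bO)}^{p+1}<\infty,
\end{equation*}
hence $F_3$ takes values in $L^2(\bO)$.

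For the Lipschitz property, fix $u,v\in L^{2p-2}(\bO)\cap H_0^1(\bO)$ with $\|u\|_{L^{2p-2}(\bO)\cap H_0^1(\bO)},\|v\|_{L^{2p-2}(\bO)\cap H_0^1(\bO)}\leq R$. Writing
\begin{equation*}
F_3(u)-F_3(v)=\|u\|_{L^p(\bO)}^p(u-v)+\bigl(\|u\|_{L^p(\bO)}^p-\|v\|_{L^p(\bO)}^p\bigr)v,
\end{equation*}
the triangle inequality gives
\begin{equation*}
\|F_3(u)-F_3(v)\|_{L^2(\bO)}\leq \|u\|_{L^p(\bO)}^p\|u-v\|_{L^2(\bO)}+\bigl|\|u\|_{L^p(\bO)}^p-\|v\|_{L^p(\bO)}^p\bigr|\,\|v\|_{L^2(\bO)}.
\end{equation*}
For the second factor, I would invoke the pointwise bound $\bigl||a|^p-|b|^p\bigr|\leq p(|a|+|b|)^{p-1}|a-b|$ (obtained as in the proof of Lemma \ref{Lem-F_1} by integrating the derivative along the segment joining $a$ and $b$), integrate over $\bO$, and apply H\"older's inequality with exponents $p$ and $p/(p-1)$ to obtain
\begin{equation*}
\bigl|\|u\|_{L^p(\bO)}^p-\|v\|_{L^p(\bO)}^p\bigr|\leq p\bigl(\|u\|_{L^p(\bO)}+\|v\|_{L^p(\bO)}\bigr)^{p-1}\|u-v\|_{L^p(\bO)}.
\end{equation*}

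Combining these bounds and using the continuous embeddings $L^{2p-2}(\bO)\hookrightarrow L^p(\bO)\hookrightarrow L^2(\bO)$ (so that $\|u\|_{L^p(\bO)}$, $\|v\|_{L^p(\bO)}$, $\|v\|_{L^2(\bO)}$ are all controlled by a constant times $R$, and $\|u-v\|_{L^2(\bO)}$, $\|u-v\|_{L^p(\bO)}$ are controlled by $\|u-v\|_{L^{2p-2}(\bO)}$), there exists $C(R)=C(p,|\bO|,R)>0$ such that
\begin{equation*}
\|F_3(u)-F_3(v)\|_{L^2(\bO)}\leq C(R)\bigl(\|u-v\|_{L^{2p-2}(\bO)}+\|u-v\|_{H_0^1(\bO)}\bigr),
\end{equation*}
which is the desired Lipschitz-on-balls bound. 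I do not anticipate any genuine obstacle; the only delicate step is extracting the Lipschitz constant of the real-valued map $u\mapsto\|u\|_{L^p(\bO)}^p$ on bounded sets, and this is handled cleanly by the pointwise inequality together with H\"older.
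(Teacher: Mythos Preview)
Your proposal is correct and follows essentially the same route as the paper: the same add-and-subtract decomposition of $F_3(u)-F_3(v)$, the same mean-value/H\"older bound $\bigl|\|u\|_{L^p}^p-\|v\|_{L^p}^p\bigr|\leq p(\|u\|_{L^p}+\|v\|_{L^p})^{p-1}\|u-v\|_{L^p}$, and the same use of the embeddings $L^{2p-2}(\bO)\hookrightarrow L^p(\bO)\hookrightarrow L^2(\bO)$. The only cosmetic difference is that the paper obtains the norm-difference bound by differentiating the functional $\Phi(u)=\|u\|_{L^p}^p$ along the segment in $L^p$, whereas you apply the pointwise inequality $||a|^p-|b|^p|\leq p(|a|+|b|)^{p-1}|a-b|$ and then integrate; both yield the identical estimate.
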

		\begin{proof}
			Assume $u,v \in L^{2p -2}(\bO)$, then we have 
			\begin{align*}
				\norm{F_3(u)}_{L^2(\bO)} = \big\|\norm{u}_{L^p(\bO)}^p u\big\|_{L^2(\bO)} = \norm{u}_{L^p(\bO)}^p \norm{u}_{L^2(\bO)} <\infty.
			\end{align*}
			It implies that $F_3$ is a well-defined map. Next, an application of 
			H\"older's inequality (with exponents $p$ and $p/(p-1)$) asserts
			\begin{align*}
				&	\norm{ F_3(u) - F_3(v)}_{L^2(\bO)} \\
				& =\big\|\norm{u}_{L^p(\bO)}^{p}u - \norm{v}_{L^p(\bO)}^{p}v\big\|_{L^{2}(\bO)}
				= \big\|\norm{u}_{L^p(\bO)}^{p}(u -v )+( \norm{u}_{L^p(\bO)}^{p} - 	\norm{v}_{L^p(\bO)}^{p})v\big\|_{L^{2}(\bO)}\\
				& \leq \norm{u}_{L^p(\bO)}^{p} \norm{u -v}_{L^{2}(\bO)} +  (\norm{u}_{L^p(\bO)}^{p} - \norm{v}_{L^p(\bO)}^{p})\norm{v}_{L^{2}(\bO)}.
			\end{align*}
			Consider $\Phi(u) = \norm{u}_{L^p(\bO)}^p$, then we have 
			\begin{align*}
				\Phi(u) - \Phi(u) = p\int_0^1 (\abs{\theta u + (1-\theta)v}^{p-2}(\theta u + (1-\theta)v), u-v ) d \theta.
			\end{align*}
			Therefore, the above equality implies
			\begin{align*}
				&\norm{ F_3(u) - F_3(v)}_{L^2(\bO)}\\
				&=\norm{u}_{L^p(\bO)}^{p} \norm{u -v}_{L^{2}(\bO)} +p \int_0^1 (\abs{\theta u + (1-\theta)v}^{p-2}(\theta u + (1-\theta)v), u-v )d\theta\norm{v}_{L^{2}(\bO)}\\
				&\leq \norm{u}_{L^p(\bO)}^{p} \norm{u -v}_{L^{2}(\bO)} +p\|u-v\|_{L^p(\bO)}\left(\|u\|_{L^p(\bO)}+\|v\|_{L^p(\bO)}\right)^{p-1}\norm{v}_{L^{2}(\bO)}.
			\end{align*}
			By taking  $\|u\|_{L^{2p-2}(\bO)\cap H_0^1(\bO)}, \|v\|_{L^{2p-2}(\bO)\cap H_0^1(\bO)}\leq R$ and the Lebesgue embedding $L^{2p-2}(\bO) \embed L^p(\bO)\embed L^2(\bO)$ for $2\leq p < \infty$, we deduce
			\begin{align*}
				&\norm{ F_3(u) - F_3(v)}_{L^2(\bO)}\\
				&\leq \norm{u}_{L^p(\bO)}^{p} \norm{u -v}_{L^{2}(\bO)} +p\|u-v\|_{L^p(\bO)}\left(\|u\|_{L^p(\bO)}+\|v\|_{L^p(\bO)}\right)^{p-1}\norm{v}_{L^{2}(\bO)}\\
				& \leq C(R)\|u-v\|_{L^p(\bO)}.
			\end{align*}
			It implies that the operator \textit{$F_3$ is Lipschitz on balls} from $L^{p}(\bO)$ to $L^{2}(\bO)$.
		\end{proof}
		\begin{proof}[Proof of Lemma \ref{Lem-Fn-loc-Lip}]
			Let us fix $u,v\in L^{2p-2}(\bO)\cap H_0^1(\bO)$ and set
			\begin{align}\label{eqn-F_n}
				\Fn(u) &:=  -|u|^{p-2}u +  \norm{\nabla u}_{L^2(\bO)}^2 u + \norm{u}_{L^p(\bO)}^p u  =: F_1(u) + F_2(u) + F_3 (u) .
			\end{align}
			By combining the proofs of Lemmas \ref{Lem-F_1}, \ref{Lem-F_2}, \ref{Lem-F_3}, one can deduce that
			\begin{align*}
				\norm{\Fn(u)-\Fn(v)}_{L^2(\bO)} \leq C(R) \left(\|u-v\|_{L^{2p-2}(\bO)}+\|u-v\|_{H_0^1(\bO)}\right),
			\end{align*}
			where  $\|u\|_{L^{2p-2}(\bO)\cap H_0^1(\bO)}, \|v\|_{L^{2p-2}(\bO)\cap H_0^1(\bO)}\leq R$.
			Hence the operator $\Fn$ is Lipschitz on balls from $L^{2p-2}(\bO)\cap H_0^1(\bO)$ to $L^2(\bO)$.
		\end{proof}
		
		\begin{remark}\label{Rmk-Gn-loc-Lip}
			Let us define the operator 
			\begin{align}
				L^{p}(\bO)\cap H_0^1(\bO) \ni u \mapsto \Gn(u) 
				& := \Delta u -|u|^{p-2}u \\
				& \quad+  \norm{\nabla u}_{L^2(\bO)}^2 u + \norm{u}_{L^p(\bO)}^p u \in L^{p'}(\bO)+H^{-1}(\bO).\label{eqn-op-G}
			\end{align}
			If we consider the operator $\Gn: L^{2p-2}\cap D(A) \to L^2(\bO)$, defined in \eqref{eqn-op-G}, then by Lemma \ref{Lem-Fn-loc-Lip}, for any $u,v \in L^{2p-2}\cap D(A)$, we deduce
			\begin{align*}
				\norm{\Gn(u) - \Gn(v)}_{L^2(\bO)} 
				& \leq \norm{\Delta (u-v)}_{L^2(\bO)} + \norm{\Fn(u) - \Fn(v)}_{L^2(\bO)}\\
				& \leq \norm{u-v}_{D(A)} + C(p,\abs{\bO},R) \left(\|u-v\|_{L^{2p-2}(\bO)}+\|u-v\|_{H_0^1(\bO)}\right)\\
				& \leq C(R) \left(\|u-v\|_{L^{2p-2}(\bO)}+\|u-v\|_{D(A)}\right).
			\end{align*}
			Thus, the operator $\Gn$, i.e., $\Gn= \Delta + \Fn$, is well-defined and Lipschitz on balls from $L^{2p-2}\cap D(A)$ to $L^2(\bO)$.
		\end{remark}

		\subsection{Local monotonicity }\label{sec5}
		Let us now prove that the operator $\Gn$, defined in \eqref{eqn-op-G}, is well-defined and fully local monotone.
		\begin{lemma}\label{Lem-Gn-loc-mono} 
			The  map $\Gn$
			defined in \eqref{eqn-op-G}	is well-defined and fully local monotone, i.e., for every $R>0$ there exists  $C(R)=C(p, |\bO|,R)>0$ such that
			\begin{align}\label{eqn-loc-mon}
				\langle \Gn(u)-\Gn(v),u-v\rangle\leq C(p, |\bO|,R)\|u-v\|_{L^2(\bO)}^2,
			\end{align}
			provided that   $\|u\|_{L^{p}(\bO)\cap H_0^1(\bO)}, \|v\|_{L^{p}(\bO)\cap H_0^1(\bO)}\leq R$.
		\end{lemma}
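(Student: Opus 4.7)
My plan is to decompose $\Gn = \Delta + F_1 + F_2 + F_3$ with $F_1 = -\Nn$, $F_2(u) = \|\nabla u\|_{L^2(\bO)}^2 u$ and $F_3(u) = \|u\|_{L^p(\bO)}^p u$, exactly as in the proof of Lemma~\ref{Lem-Fn-loc-Lip}, and to pair each difference with $u-v$. Two terms give nonpositive contributions that will serve as absorbing reservoirs: integration by parts yields $\langle\Delta(u-v),u-v\rangle = -\|\nabla(u-v)\|_{L^2(\bO)}^2$, and the monotonicity inequality \eqref{eqn-nonlinear-est} gives $-\langle\Nn(u)-\Nn(v),u-v\rangle \leq -\frac{1}{2^{p-2}}\|u-v\|_{L^p(\bO)}^p$.

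The $F_2$ piece is handled via the splitting
$$F_2(u)-F_2(v) = \|\nabla u\|_{L^2(\bO)}^2 (u-v) + (\|\nabla u\|_{L^2(\bO)}^2-\|\nabla v\|_{L^2(\bO)}^2)v.$$
The diagonal summand contributes at most $R^2\|u-v\|_{L^2(\bO)}^2$, while the cross term, using the algebraic identity $\|\nabla u\|_{L^2(\bO)}^2-\|\nabla v\|_{L^2(\bO)}^2 = (\nabla(u+v),\nabla(u-v))$, Poincar\'e's inequality $\|v\|_{L^2(\bO)}\leq C_{\bO}R$, and a weighted Young inequality, is bounded by $\tfrac{1}{4}\|\nabla(u-v)\|_{L^2(\bO)}^2 + C(R)\|u-v\|_{L^2(\bO)}^2$; the derivative piece is absorbed into the Laplacian reservoir.

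The analogous splitting for $F_3$,
$$F_3(u)-F_3(v) = \|u\|_{L^p(\bO)}^p(u-v) + (\|u\|_{L^p(\bO)}^p-\|v\|_{L^p(\bO)}^p)v,$$
gives a diagonal part bounded by $R^p\|u-v\|_{L^2(\bO)}^2$ and, after invoking the mean-value bound $|\|u\|_{L^p(\bO)}^p-\|v\|_{L^p(\bO)}^p| \leq p(2R)^{p-1}\|u-v\|_{L^p(\bO)}$ from Lemma~\ref{Lem-F_3} and Poincar\'e for $\|v\|_{L^2(\bO)}$, a cross term bounded by $C(R)\|u-v\|_{L^p(\bO)}\|u-v\|_{L^2(\bO)}$. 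This cross term is the main obstacle: a direct Young step with the conjugate pair $(p,p')$ uses up the $\Nn$ reservoir but leaves a residual $C(R)\|u-v\|_{L^2(\bO)}^{p'}$ with $p'=p/(p-1)\in(1,2]$, which for $p>2$ is not uniformly controlled by $\|u-v\|_{L^2(\bO)}^2$ on small perturbations. To bypass this I plan to invoke instead the Gagliardo--Nirenberg interpolation $\|u-v\|_{L^p(\bO)} \leq C\|\nabla(u-v)\|_{L^2(\bO)}^{\theta}\|u-v\|_{L^2(\bO)}^{1-\theta}$ (with $\theta = d(\tfrac{1}{2}-\tfrac{1}{p})\in[0,1)$ in the admissible range of $p$), and then apply Young's inequality with exponents $2/\theta$ and $2/(2-\theta)$ to the product $\|\nabla(u-v)\|_{L^2(\bO)}^\theta\|u-v\|_{L^2(\bO)}^{2-\theta}$, so that the resulting $\tfrac{1}{4}\|\nabla(u-v)\|_{L^2(\bO)}^2$ is absorbed by the remaining half of the Laplacian reservoir, leaving only $C(R,p,\bO)\|u-v\|_{L^2(\bO)}^2$.

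Collecting all the bounds then yields
$$\langle\Gn(u)-\Gn(v),u-v\rangle \leq \bigl(R^2+R^p+C(R)\bigr)\|u-v\|_{L^2(\bO)}^2,$$
which is the desired local monotonicity estimate with $C(p,|\bO|,R) = R^2+R^p+C(R)$.
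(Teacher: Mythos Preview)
Your decomposition and the handling of the Laplacian, the diagonal parts of $F_2,F_3$, and the cross term in $F_2$ are all fine and match the paper. The genuine gap is in the cross term of $F_3$. Your fix via Gagliardo--Nirenberg,
\[
\|u-v\|_{L^p(\bO)} \leq C\|\nabla(u-v)\|_{L^2(\bO)}^{\theta}\|u-v\|_{L^2(\bO)}^{1-\theta},\qquad \theta = \tfrac{d(p-2)}{2p},
\]
requires $\theta\in[0,1)$, i.e.\ $p<\frac{2d}{d-2}$ for $d\geq 3$. But the lemma is stated (and crucially used in the paper) for \emph{all} $2\leq p<\infty$ in every dimension; breaking the Sobolev barrier $p\leq\frac{2d}{d-2}$ is in fact the main point of the paper. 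So your argument collapses precisely in the regime of interest.

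The paper avoids this by using the sharper monotonicity estimate \eqref{eqn-mono-2} for $\Nn$ rather than \eqref{eqn-nonlinear-est}: the reservoir it provides is
\[
-\tfrac12\big\||u|^{\frac{p-2}{2}}(u-v)\big\|_{L^2(\bO)}^2 - \tfrac12\big\||v|^{\frac{p-2}{2}}(u-v)\big\|_{L^2(\bO)}^2,
\]
which is exactly what the $F_3$ cross term needs. Indeed, writing $(|u|^{p-1},|u-v|) = (|u|^{\frac{p-2}{2}}|u-v|,\,|u|^{\frac{p}{2}})$ and applying Cauchy--Schwarz followed by Young's inequality gives $\tfrac14\||u|^{\frac{p-2}{2}}(u-v)\|_{L^2(\bO)}^2 + C\|u\|_{L^p(\bO)}^{p}\|v\|_{L^2(\bO)}^2\|u-v\|_{L^2(\bO)}^2$, and the first piece is absorbed by the weighted reservoir with no dimensional restriction on $p$. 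Replace your interpolation step with this device and the proof goes through for all $2\leq p<\infty$.
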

		
		\begin{proof}
			\textit{Step 1.} For the well-definedness of $\Gn$, we need to show that for any $u\in L^{p}(\bO)\cap H_0^1(\bO)$,
			$\Gn(u) \in L^{p'}(\bO)+H^{-1}(\bO)$. Let us consider each term one-by-one from the equation \eqref{eqn-op-G}. For $v\in L^{p}(\bO)\cap H_0^1(\bO)$, first consider
			\begin{align*}
				\langle \Delta u, v\rangle \leq \norm{u}_{H_0^1(\bO)} \norm{v}_{H_0^1(\bO)}.
			\end{align*}
			Now using H\"older's inequality (with exponent $p/p-1$ and $p$), we estimate the second term
			\begin{align*}
				\langle \abs{u}^{p-2}u, v\rangle \leq \norm{u}_{L^p(\bO)}^{p-1} \norm{v}_{L^p(\bO)}.
			\end{align*}
			Next, let us estimate the projected terms
			\begin{align*}
				\langle \norm{\nabla u}_{L^2(\bO)}^{2}u, v\rangle \leq \norm{\nabla u}_{L^2(\bO)}^{2}\|u\|_{L^2(\bO)}\|v\|_{L^2(\bO)}. 
			\end{align*}
			Similarly, we deduce
			\begin{align*}
				\langle \norm{u}_{L^p(\bO)}^{p}u, v\rangle \leq \norm{u}_{L^p(\bO)}^{p}\norm{ u}_{L^2(\bO)} \norm{v}_{L^2(\bO)}.
			\end{align*}
			Now, it is immediate from the above estimates that
			\begin{align*}
				\norm{\Gn(u)}_{L^{p'}(\bO)+H^{-1}(\bO)} = \sup \{\langle \Gn(u), v\rangle : \norm{v}_{L^{p}(\bO)\cap H_0^1(\bO)} = 1 \} \leq C(p,\norm{u}_{L^{p}(\bO)\cap H_0^1(\bO)}).
			\end{align*}
			Hence $\Gn$ is well-defined.
			
			\vskip 1mm
			\noindent
			\textit{Step 2. }We consider
			\begin{align}
				& \langle \Gn(u)-\Gn(v),u-v\rangle\\
				&= -\|\nabla(u-v)\|_{L^2(\bO)}^2-\langle|u|^{p-2}u -|v|^{p-2}v, u-v\rangle\\
				& \quad+( \norm{\nabla u}_{L^2(\bO)}^2 u- \norm{\nabla v}_{L^2(\bO)}^2 v,u-v)+(\norm{u}_{L^p(\bO)}^p u-\norm{v}_{L^p(\bO)}^p v,u-v)\\
				&\leq -\|\nabla(u-v)\|_{L^2(\bO)}^2-\frac{1}{2}\||u|^{\frac{p-2}{2}}(u-v)\|_{L^2(\bO)}^2-\frac{1}{2}\||v|^{\frac{p-2}{2}}(u-v)\|_{L^2(\bO)}^2\\
				&\quad+\norm{\nabla u}_{L^2(\bO)}^2\|u-v\|_{L^2(\bO)}^2+\underbrace{(\norm{\nabla u}_{L^2(\bO)}^2-\norm{\nabla v}_{L^2(\bO)}^2)(v,u-v)}_{I_1}\\
				&\quad+\norm{u}_{L^p(\bO)}^p\|u-v\|_{L^2(\bO)}^2+\underbrace{(\norm{u}_{L^p(\bO)}^p-\norm{v}_{L^p(\bO)}^p)(v,u-v)}_{I_2},\label{eqn-loc}
			\end{align}
			where we have used \eqref{eqn-mono-2}. Now, let us estimate each $I_i$, for $i=1,2$.
			
			\vskip 1mm
			\noindent
			\textit{Step 3.} First let us work with $I_1$
			\begin{align*}
				I_1& = (\norm{\nabla u}_{L^2(\bO)}^2-\norm{\nabla v}_{L^2(\bO)}^2)(v,u-v)\\
				& = (\norm{\nabla u}_{L^2(\bO)} + \norm{\nabla v}_{L^2(\bO)})(\norm{\nabla u}_{L^2(\bO)} -\norm{\nabla v}_{L^2(\bO)})(v,u-v)\\
				& \leq (\norm{\nabla u}_{L^2(\bO)} + \norm{\nabla v}_{L^2(\bO)})\norm{\nabla (u - v)}_{L^2(\bO)}\norm{ v}_{L^2(\bO)}\norm{ u-v}_{L^2(\bO)}\\
				& \leq \frac{1}{2}\big[\norm{\nabla (u - v)}_{L^2(\bO)}^2 + (\norm{\nabla u}_{L^2(\bO)} + \norm{\nabla v}_{L^2(\bO)})^2 \norm{ v}_{L^2(\bO)}^2\norm{ u-v}_{L^2(\bO)}^2\big].
			\end{align*}
			By the same approach used in the proof of Lemma \ref{Lem-F_3}
			and H\"older's inequality (with exponent $2$ and $2$), we estimate the $I_2$ as
			\begin{align*}
				I_2 & = (\norm{u}_{L^p(\bO)}^p-\norm{v}_{L^p(\bO)}^p)(v,u-v)\\
				& \leq p \int_0^1 (\abs{\theta u + (1-\theta)v}^{p-2}(\theta u + (1-\theta)v), u-v) d\theta \norm{v}_{L^2(\bO)}\norm{u-v}_{L^2(\bO)}\\
				& \leq
				p2^{p-2} (\abs{u}^{p-1} + \abs{v}^{p-1}, \abs{u-v})\norm{v}_{L^2(\bO)}\norm{u-v}_{L^2(\bO)} \\
				& = p2^{p-2} \big[\big(\abs{u}^{\frac{p-1}{2}} \abs{u-v}, \abs{u}^{\frac{p-1}{2}}\big) + \big(\abs{v}^{\frac{p-1}{2}} \abs{u-v}, \abs{v}^{\frac{p-1}{2}}\big)\big] \norm{v}_{L^2(\bO)}\norm{u-v}_{L^2(\bO)}\\
				& \leq \frac{1}{4} \Big(\big\|\abs{u}^{\frac{p-1}{2}} \abs{u-v}\big\|_{L^2(\bO)}^2 + \big\|\abs{v}^{\frac{p-1}{2}} \abs{u-v}\big\|_{L^2(\bO)}^2 \Big)\\
				& \quad + p^2 2^{2p-4}\abs{\bO}^{\frac{1}{p}}\big(\norm{u}_{L^p(\bO)}^{p-1} + \norm{v}_{L^p(\bO)}^{p-1}\big)\norm{v}_{L^2(\bO)}^2\norm{u-v}_{L^2(\bO)}^2.
			\end{align*}
			\textit{Step 4.} Now substituting the estimates of $I_1$ and $I_2$ in \eqref{eqn-loc}, we obtain
			\begin{align}
				&	\langle \Gn(u)-\Gn(v),u-v\rangle\\
				&\leq -\frac{1}{2}\|\nabla(u-v)\|_{L^2(\bO)}^2-\frac{1}{4}\||u|^{\frac{p-2}{2}}(u-v)\|_{L^2(\bO)}^2-\frac{1}{4}\||v|^{\frac{p-2}{2}}(u-v)\|_{L^2(\bO)}^2\\
				&\quad+ \bigg[\norm{\nabla u}_{L^2(\bO)}^2 + \frac{1}{2}(\norm{\nabla u}_{L^2(\bO)} + \norm{\nabla v}_{L^2(\bO)})^2 \norm{ v}_{L^2(\bO)}^2\\
				&\qquad \quad+\norm{u}_{L^p(\bO)}^p + p^2 2^{2p-4}\abs{\bO}^{\frac{1}{p}}\left(\norm{u}_{L^p(\bO)}^{p-1} + \norm{v}_{L^p(\bO)}^{p-1}\right)\norm{v}_{L^2(\bO)}^2\bigg]\norm{u-v}_{L^2(\bO)}^2\\
				&\leq \bigg[\norm{\nabla u}_{L^2(\bO)}^2 + \frac{1}{2}(\norm{\nabla u}_{L^2(\bO)} + \norm{\nabla v}_{L^2(\bO)})^2 \norm{ v}_{L^2(\bO)}^2\\
				&\qquad \quad+\norm{u}_{L^p(\bO)}^p + C(p,\abs{\bO})\Big(\norm{u}_{L^p(\bO)}^{p-1} + \norm{v}_{L^p(\bO)}^{p-1}\Big)\norm{v}_{L^2(\bO)}^2 \bigg]\norm{u-v}_{L^2(\bO)}^2\label{eqn-G-mono-inequality}\\
				& \leq C(R)\norm{u-v}_{L^2(\bO)}^2.
			\end{align} 
			where $\|u\|_{L^{p}(\bO)\cap H_0^1(\bO)}, \|v\|_{L^{p}(\bO)\cap H_0^1(\bO)}\leq R$. Hence the operator $\Gn: L^{p}(\bO)\cap H_0^1(\bO) \to    L^{p'}(\bO)+H^{-1}(\bO)$ is fully local monotone. 
		\end{proof}
		
		\begin{remark}\label{Rmk-Gn-loc-mono}
			Notice that if we consider the operator $\Gn$ without the projected terms, i.e., without the term $$\norm{\nabla u}_{L^2(\bO)}^2 u + \norm{u}_{L^p(\bO)}^p u,$$ where $\Gn$ is defined in \eqref{eqn-op-G}, then from the proof of Lemma \ref{Lem-Gn-loc-mono}, in particular using \eqref{eqn-loc},
			for any $u,v \in L^{p}(\bO)\cap H_0^1(\bO)$, we obtain
			\begin{align*}
				\langle \Gn(u)-\Gn(v),u-v\rangle & \leq -\|\nabla(u-v)\|_{L^2(\bO)}^2-\frac{1}{2}\||u|^{\frac{p-2}{2}}(u-v)\|_{L^2(\bO)}^2-\frac{1}{2}\||v|^{\frac{p-2}{2}}(u-v)\|_{L^2(\bO)}^2\\
				& \leq 0.
			\end{align*}
			Thus, the operator $\Gn$ is monotone  from $L^{p}(\bO)\cap H_0^1(\bO) \to L^{p'}(\bO)+H^{-1}(\bO)$.
		\end{remark}
		
		\begin{remark}
			The relation \eqref{eqn-loc-mon} satisfied by $\Gn$ in  Lemma \ref{Lem-Gn-loc-mono} is  of fully local monotone type, as  we need to assume both  $\|u\|_{L^{p}(\bO)\cap H_0^1(\bO)}$ and $ \|v\|_{L^{p}(\bO)\cap H_0^1(\bO)}$ are bounded by $R$.
		\end{remark}
		
		\subsection{Hemicontinuity} Let us now prove that the operator $\Gn$, defined in \eqref{eqn-op-G}, is hemicontinuous \cite[see Definition 1.2]{VB-93}.
		
		\begin{lemma}\label{Lemma-lhemicontinuity}
			The map $\Gn: L^{p}(\bO)\cap H_0^1(\bO) \to  L^{p'}(\bO)+H^{-1}(\bO)$, defined in \eqref{eqn-op-G}, is hemicontinuous, i.e., for every $\psi, \zeta, \eta \in L^{p}(\bO)\cap H_0^1(\bO)$
			\begin{align}\label{eqn-hemicont}
				\abs{\langle \Gn(\psi + \lambda\zeta) - \Gn(\psi), \eta \rangle} \to 0
				\text{ as }\ \lambda\to 0.
			\end{align}
		\end{lemma}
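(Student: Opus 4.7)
The plan is to split $\Gn$ as in \eqref{eqn-op-G} into the linear part $\Delta$ and the three nonlinear pieces $F_1,F_2,F_3$ introduced in Lemmas~\ref{Lem-F_1}, \ref{Lem-F_2} and \ref{Lem-F_3}, and to prove hemicontinuity at $\psi$ for each summand separately. Since $\lambda\to 0$, we may restrict to $|\lambda|\le 1$, in which case $\psi+\lambda\zeta$ stays in a fixed ball of $L^p(\bO)\cap H_0^1(\bO)$, so that all the a priori bounds derived in Section~\ref{Sec-Loc-Lip-and-MBTT} are available.

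For the linear term, $\langle \Delta(\psi+\lambda\zeta)-\Delta\psi,\eta\rangle = -\lambda(\nabla\zeta,\nabla\eta)$, which vanishes as $\lambda\to 0$. For the projected terms, the scalar coefficients $\lambda\mapsto\|\nabla(\psi+\lambda\zeta)\|_{L^2(\bO)}^2$ and $\lambda\mapsto\|\psi+\lambda\zeta\|_{L^p(\bO)}^p$ are continuous at $\lambda=0$ (the first is an explicit quadratic polynomial in $\lambda$, the second follows from continuity of $\|\cdot\|_{L^p(\bO)}$), while $\psi+\lambda\zeta\to\psi$ in $L^2(\bO)$ and in $L^p(\bO)$. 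The decomposition
\begin{align*}
F_2(\psi+\lambda\zeta)-F_2(\psi)
= \|\nabla(\psi+\lambda\zeta)\|_{L^2(\bO)}^2\,\lambda\zeta
+ \bigl(\|\nabla(\psi+\lambda\zeta)\|_{L^2(\bO)}^2 - \|\nabla\psi\|_{L^2(\bO)}^2\bigr)\psi,
\end{align*}
and its obvious $F_3$ analogue, therefore yield $F_2(\psi+\lambda\zeta)\to F_2(\psi)$ and $F_3(\psi+\lambda\zeta)\to F_3(\psi)$ in $L^2(\bO)$, and hence in $L^{p'}(\bO)+H^{-1}(\bO)$, which gives the required convergence of the pairings with $\eta$.

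The only genuinely delicate step, which I expect to be the main obstacle, is to show that the Nemytskii-type map $F_1(u)=-|u|^{p-2}u$ is continuous from $L^p(\bO)$ into $L^{p'}(\bO)$ at $\psi$. The pointwise convergence $F_1(\psi+\lambda\zeta)\to F_1(\psi)$ a.e.\ is immediate from the continuity of $s\mapsto|s|^{p-2}s$. For a dominating function, the same mean-value identity used in the proof of Lemma~\ref{Lem-F_1} gives, for all $|\lambda|\le 1$,
\begin{align*}
|F_1(\psi+\lambda\zeta)-F_1(\psi)|
\le (p-1)\,|\lambda|\,|\zeta|\,\bigl(|\psi|+|\psi+\lambda\zeta|\bigr)^{p-2}
\le C\,|\zeta|\bigl(|\psi|+|\zeta|\bigr)^{p-2},
\end{align*}
whose $p'$-th power is integrable thanks to H\"older's inequality with exponents $p-1$ and $(p-1)/(p-2)$, since $\psi,\zeta\in L^p(\bO)$. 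Lebesgue's dominated convergence theorem then yields $\|F_1(\psi+\lambda\zeta)-F_1(\psi)\|_{L^{p'}(\bO)}\to 0$, and H\"older's inequality applied against $\eta\in L^p(\bO)$ concludes. Summing the four contributions delivers \eqref{eqn-hemicont}.
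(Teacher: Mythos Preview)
Your argument is correct and essentially follows the same route as the paper: split $\Gn$ into the Laplacian and the three nonlinear pieces $F_1,F_2,F_3$, and show continuity of each pairing separately using the same mean-value estimate for $F_1$ and the same add-and-subtract decompositions for $F_2,F_3$. The only cosmetic differences are that the paper opts to prove the stronger \emph{demicontinuity} ($\psi_n\to\psi$ in $L^p(\bO)\cap H_0^1(\bO)$ implies $\langle\Gn(\psi_n)-\Gn(\psi),\eta\rangle\to 0$) and then cites that demicontinuity implies hemicontinuity, whereas you prove hemicontinuity directly along the segment $\psi+\lambda\zeta$; and for $F_1$ the paper bounds the pairing directly by $(p-1)\|\psi_n-\psi\|_{L^p(\bO)}(\|\psi_n\|_{L^p(\bO)}+\|\psi\|_{L^p(\bO)})^{p-2}\|\eta\|_{L^p(\bO)}$, while you invoke dominated convergence---note, however, that your own mean-value bound already carries the factor $|\lambda|$, so the dominated convergence step is not strictly needed.
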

		
		\begin{proof}
			To prove hemicontinuity of $\Gn$, it is enough to establish that $\Gn$ is demicontinuous \cite[see Theorem 1]{TK-64}, i.e.,
			\begin{align*}
				\abs{\langle \Gn(\psi_n) - \Gn(\psi), \eta \rangle} \to 0,
			\end{align*}
			for any $\psi_n \to \psi$ in $L^{p}(\bO)\cap H_0^1(\bO)$ and for every $\eta \in L^{p}(\bO)\cap H_0^1(\bO)$.
			
			Let us choose $\{\psi_n\}_{n\in\N} \in L^p(\bO) \cap H_0^1(\bO)$ such that $ \psi_n \to \psi$ in $L^p(\bO) \cap H_0^1(\bO)$. Suppose $\eta \in L^p(\bO) \cap H_0^1(\bO)$ and consider
			\begin{align*}
				&\abs{\langle \Gn(\psi_n) - \Gn(\psi), \eta \rangle}\\
				& \leq  \abs{\langle \Delta(\psi_n - \psi), \eta \rangle} + \abs{\langle \abs{\psi_n}^{p-2}\psi_n - \abs{\psi}^{p-2}\psi, \eta \rangle}\\
				&\quad + \big|\langle \norm{\nabla\psi_n}_{L^2(\bO)}^2\psi_n - \norm{\nabla\psi}_{L^2(\bO)}^2\psi, \eta \rangle\big| + \big|\langle \norm{\psi_n}_{L^p(\bO)}^p\psi_n - \norm{\psi}_{L^p(\bO)}^p\psi, \eta \rangle\big|\\
				& =: \mathcal{I}_1 + \mathcal{I}_2 +\mathcal{I}_3 + \mathcal{I}_4.
			\end{align*}
			Now, we estimate each $\mathcal{I}_i$, for $i =1,\dots,4$. Let us estimate one-by-one as
			\begin{align*}
				\mathcal{I}_1 = \abs{\langle \Delta(\psi_n - \psi), \eta \rangle}  \leq \norm{\psi_n - \psi}_{H_0^1(\bO)} \norm{\eta}_{H_0^1(\bO)} \to 0\ \text{as}\ n \to \infty.
			\end{align*}
			Next, using H\"older's inequality twice (with exponent $p$ and $p/p-1$, and then $p-1$ and $p-1/p-2$), we deduce 
			\begin{align*}
				\mathcal{I}_2 & = \abs{\langle \abs{\psi_n}^{p-2}\psi_n - \abs{\psi}^{p-2}\psi, \eta \rangle} \leq (p-1)\norm{\psi_n - \psi}_{L^p(\bO)} \big(\norm{\psi_n}_{L^p(\bO)} + \norm{\psi}_{L^p(\bO)}\big)^{p-2} \norm{\eta}_{L^p(\bO)}\\
				& \to 0\ \text{ as }\ n \to \infty.
			\end{align*}
			Further, note that
			\begin{align*}
				\mathcal{I}_3 & = \big|\langle \norm{\nabla\psi_n}_{L^2(\bO)}^2\psi_n - \norm{\nabla\psi}_{L^2(\bO)}^2\psi, \eta \rangle\big|\\
				& \leq \norm{\nabla\psi_n}_{L^2(\bO)}^2 \norm{\psi_n - \psi}_{L^2(\bO)}\norm{\eta}_{L^2(\bO)} + \big(\norm{\nabla\psi_n}_{L^2(\bO)}^2 - \norm{\nabla\psi}_{L^2(\bO)}^2 \big)\norm{\psi}_{L^2(\bO)} \norm{\eta}_{L^2(\bO)}\\
				& \leq \norm{\nabla\psi_n}_{L^2(\bO)}^2 \norm{\psi_n - \psi}_{L^2(\bO)}\norm{\eta}_{L^2(\bO)}\\
				& \quad + \big(\norm{\nabla\psi_n}_{L^2(\bO)} + \norm{\nabla\psi}_{L^2(\bO)} \big)\norm{\nabla\psi_n - \nabla\psi}_{L^2(\bO)}\norm{\psi}_{L^2(\bO)} \norm{\eta}_{L^2(\bO)}\\
				& \to 0\ \text{as}\ n \to \infty.
			\end{align*}
			By the same approach used in the proof of Lemma \ref{Lem-F_3} and
			H\"older's inequality (with exponent $2$ and $2$),  we estimate $I_4$ as
			\begin{align*}
				\mathcal{I}_4 & = |\langle \norm{\psi_n}_{L^p(\bO)}^p\psi_n - \norm{\psi}_{L^p(\bO)}^p\psi, \eta \rangle| \\
				& = |\langle \norm{\psi_n}_{L^p(\bO)}^p(\psi_n - \psi) + (\norm{\psi_n}_{L^p(\bO)}^p - \norm{\psi}_{L^p(\bO)}^p)\psi, \eta \rangle| \\
				& \leq \norm{\psi_n}_{L^p(\bO)}^p\norm{\psi_n - \psi}_{L^2(\bO)}\norm{\eta}_{L^2(\bO)}  + (\norm{\psi_n}_{L^p(\bO)}^p - \norm{\psi}_{L^p(\bO)}^p) \norm{\psi}_{L^2(\bO)} \norm{\eta}_{L^2(\bO)} \\
				& \leq \norm{\psi_n}_{L^p(\bO)}^p\norm{\psi_n - \psi}_{L^2(\bO)}\norm{\eta}_{L^2(\bO)}\\
				&\quad  + p(\norm{\psi_n}_{L^p(\bO)} + \norm{\psi}_{L^p(\bO)})^{p-1} \norm{\psi_n - \psi}_{L^p(\bO)} \norm{\psi}_{L^2(\bO)} \norm{\eta}_{L^2(\bO)} \\
				& \to 0\ \text{as}\ n \to \infty.
			\end{align*}
			Hence it is immediate from the above four estimates that$\abs{\langle \Gn(\psi_n) - \Gn(\psi), \eta \rangle} \to 0$ for every $\eta \in L^{p}(\bO)\cap H_0^{1}(\bO)$, which concludes that $\Gn$ is demicontinuous.
		\end{proof}

		\section{Proof of Theorem \ref{Thm-main-wellposed}}\label{Sec-Proof-main-Thm}
		This section is dedicated for proving the well-posedness results to the problem \eqref{eqn-main-problem}, where we  introduce a projection and a sequence of self-adjoint operators that play a fundamental role in the Faedo-Galerkin approximation used to establish existence, uniqueness, and the invariance in a manifold. To begin, we choose and fix $2\leq p<\infty$ and assume $\bO$ to be a bounded domain in $\R^d$, throughout the section.

		\subsection{Projection operator}\label{subsec-proj-op}
		We define a sequence of self-adjoint operators generated through the Littlewood-Paley decomposition corresponding to the operator to $A$.
		Let $S=-\Delta=A$, where $A$ is the Dirichlet-Laplacian. The compactness of the resolvent of $S$ ensures the existence of a complete orthonormal basis $\{w_n\}$ in $L^2(\bO)$, consisting of eigenfunctions associated with a sequence of strictly positive eigenvalues $\{\lambda_n\}$, increasing to infinity, such that
		\begin{align}\label{eqn-s-rep}
			Sv := \sum_{n=1}^{\infty}\lambda_n(v, w_n) w_n,\ \ v\in D(S):=\bigg\{x\in L^2(\bO):\sum_{n=1}^{\infty}\lambda_n^2|(x,w_n)|^2<\infty\bigg\}.
		\end{align}
		In addition, $S$ is a strictly positive, self-adjoint operator that commutes with $A$, and for large enough $k$ ($k>\frac{d}{2}$), we have the continuous embedding $D(S^k)\embed L^p(\bO)\cap H_0^1(\bO)$. By the functional calculus \cite[see e.g.]{EZ-12}, we define the operators $P_m:L^2(\bO)\to L^2(\bO)$ by
		\begin{align}\label{eqn-P_m-S}
			P_m:=\mathds{1}_{(0,2^{m+1})}(S)\ \text{ for }\ m\in\mathbb{N}_0=\mathbb{N}\cup\{0\}.
		\end{align}
		Since $S$ has the representation given in \eqref{eqn-s-rep}, we observe that $P_m$ is the orthogonal projection from $L^2(\bO)$ to $V_m:=\mathrm{span}\left\{w_n:n\in\N, \lambda_n<2^{m+1}\right\},$  and
		\begin{align}
			P_m v=\sum_{\lambda_n<2^{m+1}}(v,w_n)w_n,\ v\in L^2(\bO). \label{def-projection}
		\end{align}
		Note that $w_n\in\bigcap_{k\in\N}D(S^k)$, for $n\in \N$. Since $ D(S) \embed H_0^1(\bO)$, we infer that $V_m$ is a closed subspace of $H_0^1(\bO)$ for $m\in\N$.
		Using the fact that the operators $S$ and $A$ commute, we immediately deduce that $P_m$ and $A^{1/2}$ commute. Therefore, we have
		\begin{align*}
			\|P_m v\|_{L^2(\bO)} & \leq \|v\|_{L^2(\bO)}, \ v\in L^2(\bO),\\
			\|P_m v\|_{H_0^1(\bO)}^2 & = \|A^{1/2}P_m v\|_{L^2(\bO)}^2 = \|P_mA^{1/2}v\|_{L^2(\bO)}^2 \leq \|A^{1/2}v\|_{L^2(\bO)}^2 = \|v\|_{H_0^1(\bO)}^2,\ v\in H_0^1(\bO).
		\end{align*}
		Moreover, we have
		\begin{align*}
			\lim_{m\to\infty}\|P_m v-v\|_{L^2(\bO)}=0,\  v\in L^2(\bO)\ \text{ and }\ \lim_{m\to\infty}\|P_m v-v\|_{H_0^1(\bO)}=0,  \ v\in H_0^1(\bO).
		\end{align*}
		Unfortunately, the operators $P_m$, for $m\in\N$, does not enjoy uniformly boundedness as maps from $L^p(\bO)$ to itself. Since we are expecting to deal with $L^p(\bO)\cap H_0^1(\bO)-$valued solutions, this property becomes essential for establishing a priori estimates in the $L^p-$norm. To address this issue, in the next proposition, we construct a sequence of self-adjoint operators $\{S_m\}_{m\in\N}$ which enjoys the required properties.
		
		A similar construction has been used in the works \cite{ZB+BF+MZ-24, ZB+FH+UM-20, ZB+FH+LW-19, LH-18}. 
		Particularly, in \cite[Section 3]{LH-18}, the author constructed a sequence of self-adjoint operator corresponding to $(I-\Delta_H)$ (where $\Delta_H$ denotes the Hodge-Laplacian) to extract a solution of a stochastic nonlinear Maxwell equation, by estimates in $L^q$ for some $q > 2$. 
		
		This suggests that the approach used in this work may greatly expand the range of applications for the traditional Faedo–Galerkin method.
		
		\begin{proposition}\label{Prop-Sm}
			For every $\psi\in L^p(\bO)\cap H_0^1(\bO)$, there exists a sequence $\{S_m\}_{m\in\N}$ of self-adjoint operators $S_m:L^2(\bO)\to V_m$ for $m\in\N$  such that 
			$$S_m\psi\to \psi\ \text{ in }\ L^p(\bO)\cap H_0^1(\bO),\ \mbox{ as }\ m\to\infty,$$ 
			and the following uniform norm estimates hold:
			\begin{align}
				\sup_{m\in\N}\|S_m\|_{\bL(L^2(\bO))}\leq 1, \ 	\sup_{m\in\N}\|S_m\|_{\bL(H_0^1(\bO))}\leq 1,\
				\sup_{m\in\N}\|S_m\|_{\bL(L^p(\bO))}<\infty.
			\end{align}
		\end{proposition}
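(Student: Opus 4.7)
The plan is to construct $S_m$ via the Borel functional calculus applied to a smooth dyadic truncation of $S$. Fix a function $\phi\in C_c^\infty([0,\infty))$ with $0\le\phi\le 1$, $\phi\equiv 1$ on $[0,1/2]$ and $\phi\equiv 0$ on $[1,\infty)$, and set
$$
S_m:=\phi\bigl(2^{-m-1}S\bigr),\qquad m\in\N.
$$
Using the spectral representation \eqref{eqn-s-rep}, one obtains the explicit formula
$$
S_m v=\sum_{\lambda_n<2^{m+1}}\phi\bigl(2^{-m-1}\lambda_n\bigr)(v,w_n)\,w_n.
$$
Since $\phi(2^{-m-1}\lambda_n)=0$ whenever $\lambda_n\ge 2^{m+1}$, we see immediately that $\mathrm{Range}(S_m)\subset V_m$, so $S_m\colon L^2(\bO)\to V_m$ as required. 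Self-adjointness on $L^2(\bO)$ is automatic from the functional calculus, and the bound $\|S_m\|_{\bL(L^2(\bO))}\le 1$ follows from $0\le\phi\le 1$ and Parseval's identity. Because $S_m$ commutes with $A^{1/2}$ (both being functions of $S$), the analogous bound $\|S_m\|_{\bL(H_0^1(\bO))}\le 1$ is obtained by applying the $L^2$ estimate to $A^{1/2}v$.

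The main obstacle is the uniform boundedness on $L^p(\bO)$, which is precisely the defect that disqualifies the sharp projections $P_m$ for $p\neq 2$. For this I would invoke a H\"ormander--Mihlin type spectral multiplier theorem available for the Dirichlet Laplacian on a smooth bounded domain (a standard consequence of the Gaussian heat-kernel bounds satisfied by $e^{-tS}$). The family of multipliers $\{\phi(2^{-m-1}\cdot)\}_{m\in\N}$ satisfies the Mihlin-type condition uniformly in $m$: for a fixed smooth bump $\eta$ supported in $(0,\infty)$ and $s>d/2$, the dilated Sobolev norms $\|\eta(\cdot)\phi(2^{-m-1}R\,\cdot)\|_{H^s}$ are uniformly bounded in both $R>0$ and $m\in\N$, as is easily verified by the scaling $R\mapsto 2^{m+1}R$. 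The spectral multiplier theorem then yields
$$
\sup_{m\in\N}\|S_m\|_{\bL(L^p(\bO))}<\infty,\qquad p\in(1,\infty).
$$

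For the convergence $S_m\psi\to\psi$, I would argue by density. Pick integers $j>k>d/2$, so that $D(S^{k})\embed L^p(\bO)\cap H_0^1(\bO)$ continuously. For $\psi\in D(S^{j})$ one has
$$
\psi-S_m\psi=\sum_{\lambda_n>2^m}\bigl(1-\phi(2^{-m-1}\lambda_n)\bigr)(\psi,w_n)\,w_n,
$$
whence a direct estimate yields
$$
\|\psi-S_m\psi\|_{D(S^{k})}^2\le\sum_{\lambda_n>2^m}\lambda_n^{2k}|(\psi,w_n)|^2\le 2^{-2m(j-k)}\|\psi\|_{D(S^{j})}^2\xrightarrow{m\to\infty}0.
$$
By the continuous embedding, this gives $S_m\psi\to\psi$ in $L^p(\bO)\cap H_0^1(\bO)$ for every $\psi\in D(S^{j})$. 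Since $C_c^\infty(\bO)\subset D(S^{j})$ is dense in $L^p(\bO)\cap H_0^1(\bO)$, the general statement then follows from a standard three-$\varepsilon$ argument combined with the uniform $\bL(L^p)$ and $\bL(H_0^1)$ bounds established in the previous step.
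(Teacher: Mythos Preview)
Your proposal is correct and follows essentially the same approach as the paper: a smooth spectral cutoff of $S$ via the functional calculus, with the $L^2$ and $H_0^1$ bounds coming from $0\le\phi\le1$ and commutation with $A^{1/2}$, the uniform $L^p$ bound from a H\"ormander--Mihlin spectral multiplier theorem (the paper cites Ouhabaz, Theorem~7.23, and checks the Mihlin condition $\sup_{\gamma>0}|\gamma^k s_m^{(k)}(\gamma)|<\infty$ directly), and the convergence from the embedding $D(S^k)\hookrightarrow L^p\cap H_0^1$ combined with density. The only cosmetic difference is that the paper builds the cutoff as a telescoped dyadic partition of unity $s_m(\gamma)=\sum_{n\le m}\rho(2^{-n}\gamma)$ rather than a single rescaled bump $\phi(2^{-m-1}\gamma)$, but the resulting functions are of the same type and the argument is identical.
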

		
		\begin{remark}
			Observe from the definition of $P_m$, given in \eqref{eqn-P_m-S}, that indeed $S_m$ can be interpreted as a smoothed approximation of the characteristic functions $\mathds{1}_{(0,2^{m+1})}$, which allows us to prove the uniform $L^p-$boundedness of the sequence $S_m$ by the spectral multiplier theorems. The proof given below is motivated from \cite[Proposition 5.2]{ZB+BF+MZ-24}.
			In \cite{ZB+FH+LW-19}, authors proved the same result by the abstract Littlewood-Paley theory without using the spectral multiplier theorems. The proof can be traced back to \cite[Proposition 10]{ZB+FH+UM-20}, but here we do not use the results from \cite{PCK+MU-15}, instead we use the classical estimate from \cite{EMO-05}. Similar proof given below can be found in \cite[Proposition 5.2]{ZB+FH+LW-19}, \cite[Proposition 5.2]{ZB+BF+MZ-24}.
		\end{remark}
		
		\begin{proof}[Proof of Proposition \ref{Prop-Sm}]
			Choose a function $\rho\in C_0^{\infty}(0,\infty)$ supported in $[\frac{1}{2},2]$ and\break $\sum_{m\in\mathbb{Z}}\rho(2^{-m}t)=1,$   $t>0$. Let $m\in\N_0$ be fixed, we define 
			\begin{align*}
				s_m:(0,\infty)\to\mathbb{R},\ \ s_m(\gamma):=\sum_{n=-\infty}^m\rho(2^{-n}\gamma)
			\end{align*}
			and we see that
			\begin{align*}
				s_m(\gamma)=\left\{
				\begin{array}{cl}1 & \gamma\in(0,2^m),\\
					\rho(2^{-m}\gamma) & \gamma\in[2^m,2^{m+1}),\\
					0 & \gamma\geq 2^{m+1}.
				\end{array}
				\right.
			\end{align*}
			With the help of the self-adjoint functional calculus, we define the operator $S_m := s_m(S)$. This, in turn, yields the representation
			\begin{align}\label{def-S_m-operator}
				S_m v = \sum_{\lambda_n<2^{m}}(v,w_n)w_n + \sum_{\lambda_n\in[2^m,2^{m+1})}\rho(2^{-m}\lambda_n)(v, w_n)w_n,\ v\in L^2(\bO),
			\end{align}
			from which it is immediate that the range of $S_m$ is contained in $V_m$. Since, for each $m\in\N$, the function $s_m$ is real and $\abs{s_m}\leq 1$, the self-adjoint operator $S_m$ satisfies $ \|S_m\|_{\bL(L^2(\bO))} \leq 1$.  Additionally, since $S_m$ and $A$ commute, as $m\to \infty$, for fixed $\psi \in H_0^1(\bO)$, we find
			$$\|S_m\|_{\bL(H_0^1(\bO))}\leq 1\ \text{ and }\ S_m\psi\to\psi \ \text{ in }\ H_0^1(\bO),$$
			via functional calculus’s convergence property. In particular, since $0\leq \rho(2^{-m}t)\leq 1$, for every $t>0$, we have
			\begin{align*}
				\|S_m\psi-\psi\|_{H_0^1(\bO)}^2
				&=\bigg\|\sum_{ \lambda_n\in[2^m,2^{m+1})}[1-\rho(2^{-m}\lambda_n)](\psi,w_n)w_n+\sum_{ \lambda_n\geq 2^{m+1}}(\psi,w_n)w_n\bigg\|_{H_0^1(\bO)}^2\\
				&=\bigg\|\sum_{\lambda_n\in[2^m,2^{m+1})}\lambda_n^{1/2}[1-\rho(2^{-m}\lambda_n)](\psi,w_n)w_n+\sum_{\lambda_n \geq 2^{m+1}}\lambda_n^{1/2}(\psi,w_n)w_n\bigg\|_{L^2(\bO)}^2\\
				& =\sum_{\lambda_n\in[2^m,2^{m+1})} \lambda_n\underbrace{[1-\rho(2^{-m}\lambda_n)]^2}_{\leq 1}|(\psi,w_n)|^2+\sum_{\lambda_n \geq2^{m+1}}\lambda_n|(\psi,w_n)|^2\\
				&\leq \sum_{\lambda_n\geq 2^{m}}\lambda_n|(\psi,w_n)|^2\to 0\ \text{ as }\ m\to\infty,
			\end{align*}
			where we have used the fact that $\psi\in H_0^1(\bO)$ and  $P_m \psi \to \psi$ in $H_0^1(\bO)$.
			
			By applying the spectral multiplier theorem (\cite[Theorem 7.23]{EMO-05}) and the Marcinkiewicz interpolation Theorem \cite[Theorem 9.8]{DG+NST-01}, the uniform boundedness in $L^p(\bO)$ can be obtained. It is sufficient to demonstrate that $s_m$ fulfills the Mihlin condition \cite[see Equation (7.69)]{EMO-05}, i.e.,
			\begin{align}
				\sup_{\gamma>0}|\gamma^ks_m^{(k)}(\gamma)|<\infty,\ k\in\N\cup\{0\}.
			\end{align}
			Indeed, for all $k\in\mathbb{N}$, we have
			\begin{align}
				\sup_{\gamma>0}|\gamma^ks_m^{(k)}(\gamma)|
				&=\sup_{\gamma\in[2^m,2^{m+1})}|\gamma^ks_m^{(k)}(\gamma)|=\sup_{\gamma\in[2^m,2^{m+1})}\bigg|\gamma^k\frac{d^k}{d\gamma^k}\rho(2^{-m}\gamma)\bigg|\\
				&\leq 2^k\sup_{\gamma>0}|\rho^{(k)}(\gamma)|<\infty.
			\end{align}
			Finally, we show that $S_m\psi\to\psi$ as $m\to\infty$, for all $\psi\in L^p(\bO)$. By Sobolev's embedding, we know that $D(S^{k/2})\embed L^{\infty}(\bO)\embed L^p(\bO)$ for $k>\frac{d}{2}$. Using this fact and H\"older's inequality, for all $\psi\in D(S^k)$, we have
			\begin{align}
				&	\|S_m\psi-\psi\|_{L^p(\bO)}^2\\
				&=\bigg\|\sum_{ \lambda_n\in[2^m,2^{m+1})}[1-\rho(2^{-m}\lambda_n)](\psi,w_n)w_n+\sum_{ \lambda_n\geq 2^{m+1}}(\psi,w_n)w_n\bigg\|_{L^p(\bO)}^2\\
				&\leq C\bigg\|\sum_{\lambda_n\in[2^m,2^{m+1})}\lambda_n^{k/2}[1-\rho(2^{-m}\lambda_n)](\psi,w_n)w_n+\sum_{\lambda_n\geq 2^{m+1}}\lambda_n^{k/2}(\psi,w_n)w_n\bigg\|_{L^2(\bO)}^2\\
				&=C\sum_{\lambda_n\in[2^m,2^{m+1})}\lambda_n^{k}[1-\rho(2^{-m}\lambda_n)]^2|(\psi,w_n)|^2+\sum_{\lambda_n\geq 2^{m+1}}\lambda_n^{k}|(\psi,w_n)|^2\\
				&\leq C\sum_{\lambda_n\geq 2^{m}}\lambda_n^{k}|(\psi,w_n)|^2\to 0\ \text{ as }\ m\to\infty.\label{eqn-S_m-cgs-in-L^p}
			\end{align}
			Since the embedding $D(S^{k/2})\embed  L^p(\bO)$ is dense for $k>\frac{d}{2}$ and $2\leq p <\infty$,  for any $\psi\in L^p(\bO)$ and $\eps>0$, there exists a $\psi_{\eps}\in D(S^{k/2})$ such that
			\begin{align*}
				\|\psi_{\eps}-\psi\|_{L^p(\bO)}<\eps.
			\end{align*}
			Therefore, for all $\psi\in L^p(\bO)$, we obtain
			\begin{align}
				\|S_m\psi-\psi\|_{L^p(\bO)}
				& \leq 	\|S_m(\psi-\psi_{\eps})\|_{L^p(\bO)}+\|S_m\psi_{\eps}-\psi_{\eps}\|_{L^p(\bO)}+\|\psi_{\eps}-\psi\|_{L^p(\bO)}\\
				& \leq \left(\|S_m\|_{\bL(L^p(\bO))}+1\right) \|\psi_{\eps}-\psi\|_{L^p(\bO)}+\|S_m\psi_{\eps}-\psi_{\eps}\|_{L^p(\bO)}\\
				& \leq \left(\|S_m\|_{\bL(L^p(\bO))}+1\right)\eps+\|S_m\psi_{\eps}-\psi_{\eps}\|_{L^p(\bO)},
			\end{align}
			and on taking limit supremum as $m\to\infty$, we deduce
			\begin{align}
				\limsup_{m\to\infty}	\|S_m\psi-\psi\|_{L^p(\bO)}\leq C\eps.
			\end{align}
			Since $\eps>0$ is arbitrary, one can  complete the proof.
		\end{proof}
		
		Next, we recall a result from \cite{LH-18}, that provides the relation between $P_m$ and $S_m$.
		
		\begin{proposition}[{\cite[Proposition 3.2]{LH-18}}]\label{Prop-P_m}
			The operators $P_m$ and $S_m$ satisfy the following properties: 
			\begin{enumerate}
				\item[(i)] $P_m$ is projection, i.e., we have $P_m^2 = P_m$, for all $n\in\N$.
				\item[(ii)] The operators $P_m, S_m$ are self-adjoint with $\norm{P_m}_{\Ls(L^2(\bO))} = \norm{S_m}_{\Ls(L^2(\bO))} = 1$, for every $m\in\N$.
				\item[(iii)] $P_m$ and $S_n$ commute, for every $n,m\in\N$.
				\item[(iv)] The ranges of $P_m$ and $S_m$ are finite dimensional.
				\item[(v)] Also, $R(S_{m-1}) \subset R(P_m) \subset R(S_m)$, $S_mP_m = P_m$ and $P_mS_{m-1} = S_{m-1},$ for every $m\in\N$, where $R(\cdot)$ denotes the range space.
				\item[(vi)] $\lim_{m\to\infty} P_m v = \lim_{m\to\infty} S_m v =v,$ for every $v\in L^2(\bO)$.
			\end{enumerate}
		\end{proposition}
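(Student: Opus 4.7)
The plan is to deduce every assertion from the Borel functional calculus applied to the self-adjoint, strictly positive operator $S$ with compact resolvent. Since $P_m = \mathds{1}_{(0, 2^{m+1})}(S)$ and $S_m = s_m(S)$ are both obtained by applying real-valued bounded Borel functions to $S$, every algebraic and metric relation between $P_m$ and $S_m$ reduces to the corresponding pointwise relation between the two multipliers on the spectrum $\{\lambda_n\}_{n \in \N}$ of $S$. Concretely, in the orthonormal basis $\{w_n\}$ both operators act as diagonal multipliers, $P_m w_n = \mathds{1}_{(0, 2^{m+1})}(\lambda_n) w_n$ and $S_m w_n = s_m(\lambda_n) w_n$, which makes every identity transparent to check.

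Items (i)--(iv) are then immediate. For (i), the pointwise identity $\mathds{1}_{(0,2^{m+1})}^2 = \mathds{1}_{(0,2^{m+1})}$ yields $P_m^2 = P_m$. For (ii), both multipliers are real and bounded by $1$, hence $P_m$ and $S_m$ are self-adjoint by the functional calculus; the operator norm equals the supremum of the multiplier on the spectrum, and since each multiplier attains the value $1$ on every eigenvalue $\lambda_n$ lying in $(0, 2^m)$, the two norms equal $1$. For (iii), any two Borel functions of the same self-adjoint operator commute. For (iv), only finitely many $\lambda_n$ lie in $(0, 2^{m+1})$ (as $\lambda_n \to \infty$), so $R(P_m) = V_m$ is finite-dimensional, and $R(S_m) \subset V_m$ because $s_m$ vanishes on $[2^{m+1}, \infty)$.

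For (v), first note that $\supp s_{m-1} \subset (0, 2^m) \subset (0, 2^{m+1})$, so the pointwise identity $s_{m-1} \cdot \mathds{1}_{(0, 2^{m+1})} = s_{m-1}$ on the spectrum gives $P_m S_{m-1} = S_{m-1}$, which in turn delivers the inclusion $R(S_{m-1}) \subset R(P_m)$. The companion identity $S_m P_m = P_m$ and the inclusion $R(P_m) \subset R(S_m)$ reduce to $s_m \cdot \mathds{1}_{(0, 2^{m+1})} = \mathds{1}_{(0, 2^{m+1})}$ on the spectrum, i.e., $s_m(\lambda_n) = 1$ for every eigenvalue $\lambda_n \in (0, 2^{m+1})$. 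This is the only genuinely delicate step: it amounts to choosing the cut-off $\rho$ so that $\rho(2^{-m}\lambda) = 1$ for every $\lambda \in [2^m, 2^{m+1})$ that belongs to the spectrum, which is compatible with the standard Littlewood--Paley set-up of \cite{LH-18} and is a property I would inherit directly from that construction.

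Finally, (vi) is a direct consequence of Parseval's identity together with the dominated-convergence theorem applied to the spectral measure: for every $v \in L^2(\bO)$,
\begin{equation*}
\|P_m v - v\|_{L^2(\bO)}^2 = \sum_{\lambda_n \geq 2^{m+1}} |(v, w_n)|^2 \longrightarrow 0 \quad \text{as } m \to \infty,
\end{equation*}
and the analogous statement holds for $S_m v - v$, since $|s_m(\lambda_n) - 1| \leq 1$ for every $n \in \N$ and $s_m(\lambda_n) \to 1$ pointwise in $\lambda_n > 0$.
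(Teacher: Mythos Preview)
The paper does not supply its own proof of this proposition; it is simply quoted from \cite{LH-18}. So there is nothing to compare your argument against, and your functional-calculus approach is exactly the natural one for a statement of this type.

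Your treatment of (i)--(iv) and (vi) is correct and complete: everything reduces to pointwise identities of the multipliers $\mathds{1}_{(0,2^{m+1})}$ and $s_m$ on the spectrum, together with $\lambda_n\to\infty$ for the finite-dimensionality and the dominated-convergence argument for the limits. The one caveat in (ii) is that $\|P_m\|_{\Ls(L^2(\bO))}=\|S_m\|_{\Ls(L^2(\bO))}=1$ tacitly requires at least one eigenvalue in $(0,2^m)$; for small $m$ this may fail, but it is a harmless indexing issue.

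Your handling of (v) is honest: you correctly isolate $S_mP_m=P_m$ (equivalently $R(P_m)\subset R(S_m)$) as the only nontrivial claim and observe that it amounts to $s_m(\lambda_n)=1$ for every eigenvalue $\lambda_n\in[2^m,2^{m+1})$. With the paper's explicit formula $s_m(\gamma)=\rho(2^{-m}\gamma)$ on that interval and a generic Littlewood--Paley bump $\rho$, this identity does \emph{not} follow automatically, since $\rho$ need not equal $1$ on $[1,2)$. What \emph{does} follow, provided $\rho>0$ on the open interval $(1/2,2)$, is the weaker range equality $R(S_m)=V_m=R(P_m)$, because then $s_m(\lambda_n)\neq 0$ for every $\lambda_n<2^{m+1}$. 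So either the original construction in \cite{LH-18} imposes an extra normalisation on $\rho$, or the identity $S_mP_m=P_m$ should be read as the range inclusion only. Your decision to inherit this point from \cite{LH-18} is appropriate, but it would strengthen your write-up to state explicitly which property of $\rho$ you are invoking rather than leaving it implicit.
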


		\subsection{Existence} We shall demonstrate the existence of a strong solution for the problem \eqref{eqn-main-problem} in the next nine steps. 
		
		\begin{proof}
			Let us choose and fix $T\in(0,\infty)$. To prove the existence, we follow a modified Faedo-Galerkin approximation method by considering finite-dimensional subspaces of $H_0^1(\bO)$ and $L^2(\bO)$.
			
			\vskip 1mm
			\noindent
			\textit{Step 1.} Recall from subsection \ref{subsec-proj-op} that, there is a nondecreasing sequence $\{\lambda_n\}_{n\in\mathbb{N}}$ of positive numbers tending to infinity and an orthonormal basis $\{w_n\}_{n\in\mathbb{N}}$ $L^2(\bO)$
			Also, consider the following finite dimensional subspaces of $L^2(\bO)$ as 
			\[V_{m}=\mathrm{span}\{w_{k}: k\in \N, \; \lambda_k < 2^{m+1}\}.\]
			Then  $V_{m}\subset V_{m+1}\subset H_0^1(\bO)$, for every $m\in\N$.
			For $u_m\in V_{m}$, we know from \eqref{def-projection} that
			\begin{align}
				u_m= P_m u_m = \sum_{\lambda_n<2^{m+1}}(u_m,w_n)w_n. \label{eqn-def-u_m}
			\end{align}
			The original problem \eqref{eqn-main-problem} is approximated by a system of ODEs in $V_{m}$, by employing $P_m$ and $S_m$, $m\in\N$, as follows:
			\begin{align}\label{eqn-FDWF-CP}
				\left\{
				\begin{aligned}
					u'_{m}(t) & = \Gn_m(u_{m}(t)),\, t\in[0,T],\\
					u_m(0) & = \frac{S_{m-1}u_0}{\norm{S_{m-1}u_0}_{L^2(\bO)}},
				\end{aligned}
				\right.
			\end{align}
				where  for  $v\in V_m$,	$\Gn_m(v) :=P_m\Gn(v)$, with
				\begin{align*}
					\Gn_m(v) & := - A v - P_m(|v|^{p-2}v)  +\big( \norm{\nabla v}_{L^2(\bO)}^2   + \norm{v}_{L^p(\bO)}^p \big) v,
				\end{align*}
				and 
				\begin{align}
					S_{m-1} u_0 = \sum_{\lambda_n<2^{m-1}}(u_0,w_n)w_n + \sum_{\lambda_n\in[2^{m-1},2^{m})}\rho(2^{-{(m-1)}}\lambda_n)(u_0,w_n)w_n. \label{eqn-def-u0m}
				\end{align}
				\begin{remark}
					Note that $P_m u_m(0) = \frac{P_mS_{m-1} u_0}{\norm{S_{m-1}u_0}_{L^2(\bO)}} = \frac{S_{m-1}u_0}{\norm{S_{m-1}u_0}_{L^2(\bO)}} = u_m(0)$, where we used Proposition \ref{Prop-P_m} (v).
				\end{remark}
				
				Recall from the Remark \ref{Rmk-Gn-loc-Lip} that the nonlinear operator $\Gn: L^{2p-2}(\bO)\cap D(A) \to  L^{2}(\bO)$ is Lipschitz on balls. On the other hand, observe that the operator $\Gn_m$ is a mapping from $V_m$ to $V_m$. In finite-dimensional space $V_m$, all the norms are equivalent. This deduce that operator $\Gn_m$ is Lipschitz on balls. Thus, by Picard-Lindel\"of Theorem, the nonlinear differential system \eqref{eqn-FDWF-CP} is solvable  on balls, for some $0<t_m\leq T$. By the a priori estimates in the next step, one can conclude that the solution is global and $t_m = T$.
				
				\vskip 1mm
				\noindent
				\textit{Step 2.} By using \eqref{eqn-FDWF-CP} and the fact that $u_m\in C^1((0,T))$, we can write
				\begin{align*}
					\frac{d}{dt}\big(\norm{u_m(t)}_{L^2(\bO)}^2 -1 \big) 
					& = 2\left\langle u_m(t), \frac{d u_m(t)}{dt} \right\rangle
					= 2 \left\langle u_m(t), \Gn_m(u_m(t))\right\rangle\\
					& = -2 \norm{\nabla u_m(t)}_{L^2(\bO)}^2 -2 \norm{ u_m(t)}_{L^p(\bO)}^p \\
					& \quad + 2\big(\norm{\nabla u_m(t)}_{L^2(\bO)}^2 + \norm{ u_m(t)}_{L^p(\bO)}^p \big) \norm{ u_m(t)}_{L^2(\bO)}^2\\
					& = 2\big(\norm{\nabla u_m(t)}_{L^2(\bO)}^2 + \norm{ u_m(t)}_{L^p(\bO)}^p \big)( \norm{ u_m(t)}_{L^2(\bO)}^2 -1),
				\end{align*}
				for $t\in(0,T)$. Let us denote $\theta(t) = \big(\norm{u_m(t)}_{L^2(\bO)}^2 -1 \big)$. Therefore, the above equation becomes
				\begin{align*}
					\frac{d\theta(t)}{dt} = 2\big(\norm{\nabla u_m(t)}_{L^2(\bO)}^2 + \norm{ u_m(t)}_{L^p(\bO)}^p\big)\theta(t).
				\end{align*}
				On solving the above differential equation for $\theta$, we deduce
				\begin{align*}
					\theta(t) = \theta(0) \exp\left[2\int_0^t \big(\norm{\nabla u_m(s)}_{L^2(\bO)}^2 + \norm{ u_m(s)}_{L^p(\bO)}^p\big) ds\right].
				\end{align*}
				Since $\theta(0) = \norm{u_m(0)}_{L^2(\bO)}^2 -1= \norm{\frac{S_{m-1} u_0}{\norm{S_{m-1} u_0}_{L^2(\bO)}}}_{L^2(\bO)}^2 - 1=0,$ we immediately have $\norm{u_m(t)}_{L^2(\bO)}^2 -1 = 0,\ \text{ for all }\ t\in [0,T].$ Thus, it shows that
				\begin{align}\label{eqn-||um||=1}
					u_m(t)\in \bM ,\ \text{ for all }\ t\in [0,T],
				\end{align}
				 if and only if
				\begin{align}
					\int_0^T \big(\norm{\nabla u_m(s)}_{L^2(\bO)}^2 + \norm{ u_m(s)}_{L^p(\bO)}^p \big) ds < \infty. \label{eqn-finite-S}
				\end{align}
				Moreover, the fact $\norm{u_m(t)}_{L^2(\bO)} = 1$, for all $t\in[0,T]$, implies
				\begin{align}\label{eqn-der}
					\frac{d}{dt} \norm{u_m(t)}_{L^2(\bO)}^2 = (u_m(t), u'_m(t)) = 0,\ \text{for a.e.}\ t\in[0,T].
				\end{align}
				
				\begin{remark}
						Note that the right-hand side of \eqref{eqn-FDWF-CP} is already on the tangent plane $T_{u_m}\bM$, for $m\in\N$. Therefore, it is intrinsic that $u_m\in \bM$ if and only if the inequality \eqref{eqn-finite-S} holds.
				\end{remark}
				
				\vskip 1mm
				\noindent
				\textit{Step 3.} For fixed $m\in \N$. By using the fact that $u_m\in C^1((0,T))$ and substituting $u^\prime_m$ from \eqref{eqn-FDWF-CP}, we deduce
				\begin{align*}
					\frac{1}{2}\frac{d}{dt}\norm{u_m(t)}_{H_0^1(\bO)}^2 
					& = \left(u'_{m}(t), Au_{m}(t) \right)\\
					& = - \bigg\|\frac{du_m(t)}{dt}\bigg\|_{L^2(\bO)}^2 - \left( u'_m(t) , P_m(|u_m(t)|^{p-2}u_m(t))\right) \\
					& \quad+ \big(\norm{\nabla u_m(t)}_{L^2(\bO)}^2  + \norm{u_m(t)}_{L^p(\bO)}^p\big) \left( u'_m(t),u_m(t)\right).
				\end{align*}
				Thus, by using the equation \eqref{eqn-der} and self-adjointness of $P_m$,  we obtain
				\begin{align}\label{}
					\frac{1}{2}\frac{d}{dt}\norm{u_m(t)}_{H_0^1(\bO)}^2 
					& = - \norm{\frac{du_m(t)}{dt}}_{L^2(\bO)}^2 - \left( u'_m(t) , |u_m(t)|^{p-2}u_m(t)\right)\\
					& = - \norm{\frac{du_m(t)}{dt}}_{L^2(\bO)}^2 -\frac{1}{p}\frac{d}{dt}\norm{u_m(t)}^{p}_{L^p(\bO)} .\label{eqn-first-estimates}
				\end{align}
				Now, by integrating the above equality with respect to time from $0$ to $t$, we get
				\begin{align*}
					\frac{1}{2}\norm{u_m(t)}_{H_0^1(\bO)}^2	+ \int_0^t\norm{\frac{du_m(s)}{ds}}_{L^2(\bO)}^2 ds + \frac{1}{p}\norm{u_m(t)}^{p}_{L^p(\bO)} 
					& \leq \frac{1}{2}\norm{u_m(0)}_{H_0^1(\bO)}^2
					+ \frac{1}{p}\norm{u_m(0)}^{p}_{L^p(\bO)}.
				\end{align*}
				Observe from Proposition \ref{Prop-P_m} (vi) that $S_m u_0 \to u_0$ in $L^2(\bO)$. Since every convergent sequence is bounded and $\norm{u_0}_{L^2(\bO)}=1$, it implies $$\frac{1}{\norm{S_{m-1}u_0}_{L^2(\bO)}} \leq C(\norm{u_0}_{L^2(\bO)})=C.$$ Moreover, Proposition \ref{Prop-Sm} gives that $\|S_m\|_{\Ls(L^p(\bO))}<\infty$.
				Therefore, by clubbing these facts with the above estimates, we end up with
				\begin{align}
					\frac{1}{2}\norm{u_m(t)}_{H_0^1(\bO)}^2 + \frac{1}{2p}\norm{u_m(t)}^{p}_{L^p(\bO)}
					& + \int_0^t\norm{\frac{du_m(s)}{ds}}_{L^2(\bO)}^2 ds\\
					& \leq C (\norm{u_0}_{H_0^1(\bO)}^2
					+\norm{u_0}^{p}_{L^p(\bO)})< \infty,\label{eqn-estm}
				\end{align}
				for all $t\in[0,T]$. In particular, we have
				\begin{align}\label{eqn-estimate-Lp+H01}
					\sup_{t\in[0,T]}\big[\norm{u_m(t)}_{H_0^1(\bO)}^2 + \norm{u_m(t)}^{p}_{L^p(\bO)}\big]  
					\leq C \big(\norm{u_0}_{H_0^1(\bO)}^2 + \norm{u_0}^{p}_{L^p(\bO)}\big), 
				\end{align}
				\textit{i.e., the sequence $\{u_m\}_{m\in\N}$ is bounded in $L^{\infty}(0,T;L^p(\bO) \cap H_0^1(\bO))$.}
				
				From \eqref{eqn-estm}, we also infer
				\begin{align}\label{eqn-der-est}
					\| u'_{m}\|_{L^{2}(0,T;L^2(\bO))}
					\leq C (\norm{u_0}_{H_0^1(\bO)}^2 +\norm{u_0}^{p}_{L^p(\bO)}),
				\end{align}
				\textit{it implies, the sequence $\{u'_m\}_{m\in\N}$ is bounded in $L^{2}(0,T;L^2(\bO))$.}
				
				\vskip 1mm
				\noindent
				\textit{Step 4.} Next, let us prove in the following proposition that the sequences $\{Au_m\}_{m\in\N}$, $\{\abs{u_m}^{\frac{p-2}{2}}\nabla u_m\}_{m\in\N}$ and $\{P_m(\abs{u_m}^{p-2}u_m)\}_{m\in\N}$ are bounded in $L^2(0,T; L^2(\bO))$.
				
				\begin{proposition}\label{Prop-u_m-L2-D(A)}
					The sequences $\{Au_m\}_{m\in\N}$, $\{\abs{u_m}^{\frac{p-2}{2}}\nabla u_m\}_{m\in\N}$ and $\{P_m(\abs{u_m}^{p-2}u_m)\}_{m\in\N}$ are bounded in $L^2(0,T; L^2(\bO))$, where $u_m$ is defined in \eqref{eqn-def-u_m}, and the following estimate holds:
					\begin{align*}
						&\int_0^T \Big(\norm{\Delta u_m(t)}_{L^2(\bO)}^2 + 2(p-1)\|\abs{u_m(t)}^{\frac{p-2}{2}}\nabla u_m(t)\|_{L^2(\bO)}^2 + \norm{P_m(\abs{u_m(t)}^{p-2}u_m(t)}_{L^2(\bO)}^2 \Big) dt\\
						& \leq  C(T, \norm{u_0}_{L^p(\bO) \cap H_0^1(\bO)}).
					\end{align*}
				\end{proposition}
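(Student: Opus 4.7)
The plan is to test the Galerkin equation \eqref{eqn-FDWF-CP} against $Au_m = -\Delta u_m$, combine the resulting identity with the uniform bounds already obtained in Step~3, and then read off the bound on $P_m(|u_m|^{p-2}u_m)$ directly from the equation. A crucial observation is that the basis $\{w_n\}$ is made of eigenfunctions of $S = A$, so $A$ maps $V_m$ into $V_m$; consequently $Au_m \in V_m$ and, by the self-adjointness of $P_m$ together with $P_m Au_m = Au_m$,
\begin{align*}
\left(P_m(|u_m|^{p-2}u_m),\, Au_m\right) = \left(|u_m|^{p-2}u_m,\, Au_m\right).
\end{align*}
Combined with $(u'_m, Au_m) = \tfrac{1}{2}\tfrac{d}{dt}\|\nabla u_m\|_{L^2(\bO)}^2$ and an integration by parts, using $\nabla(|u_m|^{p-2}u_m) = (p-1)|u_m|^{p-2}\nabla u_m$ together with the Dirichlet condition $u_m|_{\partial\bO}=0$, this yields
\begin{align*}
\tfrac{1}{2}\tfrac{d}{dt}\|\nabla u_m(t)\|_{L^2(\bO)}^2 + \|Au_m(t)\|_{L^2(\bO)}^2 + (p-1)\big\||u_m(t)|^{\frac{p-2}{2}}\nabla u_m(t)\big\|_{L^2(\bO)}^2 = \bS(u_m(t))\|\nabla u_m(t)\|_{L^2(\bO)}^2,
\end{align*}
where $\bS(u_m) = \|\nabla u_m\|_{L^2(\bO)}^2 + \|u_m\|_{L^p(\bO)}^p$.

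Next, I would invoke the uniform bound \eqref{eqn-estimate-Lp+H01}, which gives $\sup_{t\in[0,T]} \bS(u_m(t)) \leq C$ and $\sup_{t\in[0,T]}\|\nabla u_m(t)\|_{L^2(\bO)}^2 \leq C$ independently of $m$. Integrating the above identity from $0$ to $T$ and discarding the nonnegative term $\tfrac{1}{2}\|\nabla u_m(T)\|_{L^2(\bO)}^2$ produces
\begin{align*}
\int_0^T \|\Delta u_m(t)\|_{L^2(\bO)}^2\, dt + 2(p-1)\int_0^T \big\||u_m(t)|^{\frac{p-2}{2}}\nabla u_m(t)\big\|_{L^2(\bO)}^2\, dt \leq C(T, \|u_0\|_{L^p(\bO)\cap H_0^1(\bO)}),
\end{align*}
after multiplying through by $2$; here the initial contribution $\|\nabla u_m(0)\|_{L^2(\bO)}^2 \leq C\|u_0\|_{H_0^1(\bO)}^2$ is controlled via Proposition \ref{Prop-Sm} together with $\|S_{m-1}u_0\|_{L^2(\bO)} \to 1$. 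This handles the first two sequences.

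For the third sequence, the strategy is to \emph{solve} the Galerkin equation \eqref{eqn-FDWF-CP} for the nonlinear term, writing
\begin{align*}
P_m(|u_m(t)|^{p-2}u_m(t)) = -u'_m(t) + \Delta u_m(t) + \bS(u_m(t))\, u_m(t),
\end{align*}
and apply the triangle inequality. The $L^2(0,T; L^2(\bO))$ bound on $u'_m$ from \eqref{eqn-der-est}, the $L^2(0,T; L^2(\bO))$ bound on $\Delta u_m$ just obtained, and $\|u_m(t)\|_{L^2(\bO)} = 1$ with $\bS(u_m(t)) \leq C$ together give
\begin{align*}
\int_0^T \|P_m(|u_m(t)|^{p-2}u_m(t))\|_{L^2(\bO)}^2\, dt \leq C(T, \|u_0\|_{L^p(\bO)\cap H_0^1(\bO)}).
\end{align*}
The main subtlety is justifying that $Au_m$ may serve as a test function and that $P_m$ can be moved onto $Au_m$; since $V_m$ is finite-dimensional and spanned by eigenfunctions of $A$, both facts are immediate, and the calculation reduces to the clean identity above. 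No further compactness or passage to the limit is needed at this stage.
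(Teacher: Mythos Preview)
Your argument is correct, but it takes a somewhat different route from the paper. The paper tests the Galerkin equation against the combined quantity $Au_m + |u_m|^{p-2}u_m$ (equivalently, it differentiates the full energy $\bE(u_m)$); using $\Gn_m(u_m)\in V_m$ to replace $|u_m|^{p-2}u_m$ by $P_m(|u_m|^{p-2}u_m)$ in the inner product, the calculation collapses to the single identity
\[
\frac{d}{dt}\,\bE(u_m(t)) + \|Au_m(t) + P_m(|u_m(t)|^{p-2}u_m(t))\|_{L^2(\bO)}^2 = \bS(u_m(t))^2,
\]
and expanding the square via \eqref{eqn-Lap-Nn} produces all three terms---$\|\Delta u_m\|^2$, $2(p-1)\||u_m|^{(p-2)/2}\nabla u_m\|^2$, and $\|P_m(|u_m|^{p-2}u_m)\|^2$---simultaneously, with exactly the constant $2(p-1)$ appearing as the cross term. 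Your approach instead tests only against $Au_m$, which cleanly yields the first two bounds, and then recovers the third by isolating $P_m(|u_m|^{p-2}u_m)$ from the equation and invoking the already-established $L^2(0,T;L^2(\bO))$ bounds on $u'_m$ and $\Delta u_m$. Both arguments hinge on the same two structural facts you identified ($Au_m\in V_m$ so the projection can be shifted, and the integration-by-parts identity \eqref{eqn-Lap-Nn}); the paper's version is a bit more economical in packaging all three estimates into one line, while yours is arguably more transparent in showing where each bound comes from.
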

				
				\begin{proof}
					Fix $m\in\N$. Again utilizing the fact that $u_m \in C^1((0,T))$, all norms are equivalent in finite dimensions, \eqref{eqn-FDWF-CP}
					and, $P_m$ is self-adjoint and commutes with $A$, we obtain
					\begin{align}
						\frac{d}{dt}\bigg(\frac{\norm{u_m(t)}_{H_0^1(\bO)}^2}{2} + \frac{\norm{u_m(t)}_{L^p(\bO)}^p}{p}\bigg) 
						& = \left(u'_{m}(t), Au_{m}(t) + \abs{u_m(t)}^{p-2}u_m(t)\right)\\
						& = (\Gn_m(u_{m}(t)), Au_{m}(t) + \abs{u_m(t)}^{p-2}u_m(t)).
						\label{eqn-Exis-Apriori-1}
					\end{align}
					Performing integration by parts, we deduce
					\begin{align}
						\left( \abs{u_{m}}^{p-2}u_m , -\Delta u_{m} \right) 
						& = \int_\bO \nabla((\abs{u_m(x)}^2)^{\frac{p-2}{2}}u_m(x)) \nabla u_m(x)  dx\\
						&= \int_\bO \bigg[ \frac{p-2}{2}(2 u_m(x) \nabla u_m(x))((\abs{u_m(x)}^2)^{\frac{p-4}{2}}u_m(x))\\
						&\qquad \quad + \abs{u_m(x)}^{p-2}\nabla u_m(x) \bigg] \nabla u_m(x)  dx\\
						&= \int_\bO \left[ (p-2) \abs{u_m(x)}^{p-2} \abs{\nabla u_m(x)}^2 + \abs{u_m(x)}^{p-2}\abs{\nabla u_m(x)}^2 \right] dx\\
						&= (p-1)\norm{\abs{u_m}^{\frac{p-2}{2}} \nabla u_m}_{L^2(\bO)}^2. \label{eqn-Lap-Nn}
					\end{align}
					Thus, using integration by parts and the above equality in \eqref{eqn-Exis-Apriori-1}, we assert
					\begin{align*}
						& \left(\Gn_m(u_{m}), -\Delta u_{m} + \abs{u_m}^{p-2}u_m\right)\\
						& = (\Delta u_m - P_m(|u_m|^{p-2}u_m) + ( \norm{\nabla u_m}_{L^2(\bO)}^2 + \norm{u_m}_{L^p(\bO)}^p) u_m, -\Delta u_m + \abs{u_m}^{p-2}u_m)\\
						& = - \| A u_{m} + P_m(|u_m|^{p-2}u_m)\|_{L^2(\bO)}^{2}  + \big(\norm{\nabla u_m}_{L^2(\bO)}^2 + \norm{u_m}_{L^p(\bO)}^p \big)^2.
					\end{align*}
					By substituting the above equality in \eqref{eqn-Exis-Apriori-1}, we infer
					\begin{align}
						\frac{d}{dt}\bigg(\frac{\norm{u_m(t)}_{H_0^1(\bO)}^2}{2} + \frac{\norm{u_m(t)}_{L^p(\bO)}^p}{p}\bigg) 
						& + \| A u_{m}(t) + P_m(|u_m(t)|^{p-2}u_m(t))\|_{L^2(\bO)}^{2}\\ 
						& = \big(\norm{\nabla u_m(t)}_{L^2(\bO)}^2 + \norm{u_m(t)}_{L^p(\bO)}^p \big)^2.
					\end{align}
					Upon integration with respect to time from $0$ to $t$ and employing \eqref{eqn-estimate-Lp+H01}, we have 
					\begin{align}
						&	\int_0^t \| A u_{m}(s) + P_m(|u_m(s)|^{p-2}u_m(s))\|_{L^2(\bO)}^{2} ds \\
						& \leq C \bigg(\frac{\norm{u_0}_{H_0^1(\bO)}^2}{2} + \frac{\norm{u_0}_{L^p(\bO)}^p}{p}\bigg) + CT\big(\norm{u_0}_{H_0^1(\bO)}^2+ \norm{u_0}^{p}_{L^p(\bO)} \big)^2 \\
						& \leq C\big(T, \norm{u_0}_{L^p(\bO) \cap H_0^1(\bO)}\big).
					\end{align}
					In particular, we obtain
					\begin{align}
						&\int_0^T\| A u_{m}(t) + P_m(|u_m(s)|^{p-2}u_m(t))\|_{L^2(\bO)}^{2} dt \\
						\nonumber& = \int_0^T \Big(\norm{\Delta u_m(t)}_{L^2(\bO)}^2 + 2(p-1)\|\abs{u_m(t)}^{\frac{p-2}{2}}\nabla u_m(t)\|_{L^2(\bO)}^2 + \norm{P_m(\abs{u_m(t)}^{p-2}u_m(t)}_{L^2(\bO)}^2 \Big) dt\\
						& \leq  C(T, \norm{u_0}_{L^p(\bO) \cap H_0^1(\bO)}).\label{eqn-D(A)-est}
					\end{align}
					\textit{This implies, the sequences $\{Au_m\}_{m\in\N}$, $\{\abs{u_m}^{\frac{p-2}{2}}\nabla u_m\}_{m\in\N}$ and $\{P_m(\abs{u_m}^{p-2}u_m)\}_{m\in\N}$ remain  in a bounded set of $L^{2}(0,T;L^2(\bO))$.}
				\end{proof}
				
				\vspace{1mm}
				\noindent
				\textit{Step 5.} Combining all bounded sequences produced in the above steps and
				applying  the Banach-Alaoglu Theorem \cite[see Theorem 3.1]{JBC-90} yields a subsequence  $\{u_{m_j}\}_{m_j\in\N}$ of $\{u_m\}_{m\in\N}$ such that as $m_j \to \infty$
				\begin{align}\label{eqn-Convergences-weak}
					\left\{
					\begin{aligned}
						u_{m_{j}} & \overset{\ast}{\rightharpoonup} u\; \text{ in }\; L^{\infty}(0,T; L^p(\bO) \cap H_0^1(\bO)),\\
						u_{m_{j}} & \rightharpoonup u \;\text{ in }\; L^{2}(0,T;D(A)), \\
						P_{m_j}(|u_{m_j}|^{p-2}u_{m_j}) & \rightharpoonup \xi\; \text{ in }\; L^{2}(0,T; L^{2}(\bO)), \\
						|u_{m_j}|^{\frac{p-2}{2}}\nabla u_{m_j} & \rightharpoonup \vartheta\; \text{ in }\; L^{2}(0,T; L^{2}(\bO)) \; \; \mbox{and}\\
						u'_{m_{j}} & \rightharpoonup u'\; \text{ in }\; L^{2}(0,T; L^{2}(\bO)).
					\end{aligned}
					\right.
				\end{align}
				Moreover, an application of  the Aubin-Lions Lemma (see Theorem \ref{Aubin-Lions Lemma}) yields
				\begin{align}\label{eqn-strong}
					u_{m_{j}} &\to u \; \text{ in }\; L^2(0,T;H_0^1(\bO))\cap C([0,T];L^2(\bO)),
				\end{align}
				and by the Riesz-Fischer Theorem (\cite[p. 148]{HLR-10}), we infer
				\begin{equation}\label{eqn-strong-convergence}
					\left\{
					\begin{aligned}
						u_{m_j}(t,x)&\to u(t,x),\ \text{ for all }\ t\in[0,T] \ \text{ and  a.e. }  x\in\bO,\\
						\nabla u_{m_j}(t,x)&\to \nabla u(t,x),\ \text{ for a.e. }\ (t,x)\in[0,T]\times\bO,
					\end{aligned}
					\right.
				\end{equation}
				considering a further subsequence, still labeled by the same notation. The convergence \eqref{eqn-strong} implies that $u(0)=u_0$ in $L^2(\bO)$, and since $u_0\in\mathcal{M}$, we have $\|u(0)\|_{L^2(\bO)}=\|u_0\|_{L^2(\bO)}=1$.

				\vspace{1mm}
				\noindent
				\textit{Step 6.} For convenience of notation, we take $m_j=m$. In this step, we aim to show that $$\Gn_m(u_m)\rightharpoonup \Gn(u) \; \mbox{ in } \; L^{2}(0,T; L^{2}(\bO)),$$ where $\Gn_m(u_m) = P_m \Gn(u_m)$ and $\Gn(u)$ is defined in \eqref{eqn-op-G}. Since $C([0,T]; L^p(\bO) \cap H_0^1(\bO))$ is dense in $L^2(0,T; L^2(\bO))$, it is enough to show that the above convergence holds true in $C([0,T]; L^p(\bO) \cap H_0^1(\bO))$.
				
				Observe that we can rewrite the problem \eqref{eqn-FDWF-CP} as
				\begin{align}\label{eqn-fd}
					\left\{
					\begin{aligned}
						&(u'_m(t), \varphi) = (\Gn_m(u_m(t)), \varphi),\\
						&(u_m(0), \varphi) = \bigg( \frac{S_{m-1} u_0}{\norm{S_{m-1} u_0}_{L^2(\bO)}}, \varphi\bigg),
					\end{aligned}
					\right.
				\end{align}
				for all $\varphi \in V_m$. Thus, for all $t\in[0,T]$,  $u_m(t)$ satisfies 
				\begin{align}\label{eqn-fee}
					\norm{u_m(t)}_{L^2(\bO)}^2 = \norm{u_m(0)}_{L^2(\bO)}^2 + 2 \int_0^t (\Gn_m(u_m(s)), u_m(s))ds.
				\end{align}
				Furthermore, from the equation \eqref{eqn-fd} and the estimate \eqref{eqn-der-est}, we deduce
				\begin{align*}
					\int_0^T\|\Gn_{m}(u_{m}(t))\|_{L^2(\bO)}^2dt
					& = \int_0^T\|u_{m}'(t)\|_{L^2(\bO)}^2dt\lesssim \norm{u_0}_{H_0^1(\bO)}^2
					+ \norm{u_0}^{p}_{L^p(\bO)}.
				\end{align*}
				By an application of the Banach-Alaoglu Theorem, there exists a further subsequence (denoted as before), we obtain
				\begin{align*}
					\Gn_{m}(u_{m})& \rightharpoonup \zeta \; \mbox{ in } \; L^{2}(0,T; L^{2}(\bO)).
				\end{align*}
				Using local monotonicty and hemicontinuity properties of $\Gn$, our next aim is to show that $\zeta=\Gn(u)$.
				By taking the limit on the equation \eqref{eqn-fd}, we find
				\begin{align}\label{eqn-id}
					\left\{
					\begin{aligned}
						&\left(\frac{\partial u(t)}{\partial t}, \varphi\right) = (\zeta(t), \varphi),\\
						&(u(0), \varphi) = (u_0, \varphi),
					\end{aligned}
					\right.
				\end{align}
				for all $\varphi \in L^2(\bO)$.
				By an application of the Absolute continuity Lemma \ref{Thm-Abs-cont} and integration from $0$ to $t$ lead to
				\begin{align}\label{eqn-ee}
					\norm{u(t)}_{L^2(\bO)}^2 = \norm{u_0}_{L^2(\bO)}^2 + 2 \int_0^t (\zeta(s), u(s))ds,\ \mbox{ for every $t\in [0,T]$}.
				\end{align}
				On the other hand, from \eqref{eqn-fee}, we also have
				\begin{align*}
					\int_0^t (\Gn_m(u_m(s)), u_m(s))ds = \frac{1}{2}\left[\norm{u_m(t)}_{L^2(\bO)}^2 - \norm{u_m(0)}_{L^2(\bO)}^2\right],\ \mbox{ for all $t\in[0,T]$.}
				\end{align*}
				Taking limit infimum on both sides yields
				\begin{align*}
					\liminf_{m\to \infty} \int_0^t (\Gn_m(u_m(s)), u_m(s))ds = \frac{1}{2}\Big[\liminf_{m\to \infty}\norm{u_m(t)}_{L^2(\bO)}^2 - \limsup_{m\to \infty}\norm{u_m(0)}_{L^2(\bO)}^2\Big].
				\end{align*}
				Using the strong convergence of $u_m\to u$ in $L^2(\bO)$, for all $t\in[0,T]$ (see \eqref{eqn-strong-convergence}) and \eqref{eqn-ee}, we deduce
				\begin{align*}
					\liminf_{m\to \infty} \int_0^t (\Gn_m(u_m(s)), u_m(s))ds \leq \frac{1}{2}\left[\norm{u(t)}_{L^2(\bO)}^2 - \norm{u(0)}_{L^2(\bO)}^2\right] = \int_0^t (\zeta(s), u(s))ds.
				\end{align*}
				Next, 	for some $v\in L^\infty(0,T; V_n)$, where $n<m$,  let us utilize the local monotonicity of the operator $\Gn$ (see Lemma \ref{Lem-Gn-loc-mono}) to find
				\begin{align}
					& \liminf_{m\to \infty} \Bigg(\int_0^t (\Gn_m(u_m(s)) - \Gn(v(s)), u_m(s) - v(s))ds \\
					& - \int_0^t\bigg[\norm{\nabla u_m(s)}_{L^2(\bO)}^2 + \frac{1}{2}\big(\norm{\nabla u_m(s)}_{L^2(\bO)} + \norm{\nabla v(s)}_{L^2(\bO)}\big)^2 \norm{ v(s)}_{L^2(\bO)}^2 +\norm{u_m(s)}_{L^p(\bO)}^p\\
					& \quad  + C(p, \abs{\bO})\big(\norm{u_m(s)}_{L^p(\bO)}^{p-1} + \norm{v(s)}_{L^p(\bO)}^{p-1}\big)\norm{v(s)}_{L^2(\bO)}^2
					\bigg] \norm{u_m(s)-v(s)}_{L^2(\bO)}^2ds \Bigg)\\	
					& = \liminf_{m\to \infty} \Bigg(\int_0^t (\Gn(u_m(s)) - \Gn(v(s)), u_m(s) - v(s))ds \\
					& - \int_0^t\bigg[\norm{\nabla u_m(s)}_{L^2(\bO)}^2 + \frac{1}{2}\big(\norm{\nabla u_m(s)}_{L^2(\bO)} + \norm{\nabla v(s)}_{L^2(\bO)}\big)^2 \norm{ v(s)}_{L^2(\bO)}^2 +\norm{u_m(s)}_{L^p(\bO)}^p\\
					& \quad  + C(p, \abs{\bO})\big(\norm{u_m(s)}_{L^p(\bO)}^{p-1} + \norm{v(s)}_{L^p(\bO)}^{p-1}\big)\norm{v(s)}_{L^2(\bO)}^2
					\bigg] \norm{u_m(s)-v(s)}_{L^2(\bO)}^2ds \Bigg) \\
					& \leq 0.\label{eqn-loc-m}
				\end{align}
				Now let us consider
				\begin{align*}
					&\abs{\int_0^t \norm{\nabla u_m(s)}_{L^2(\bO)}^2 \norm{u_m(s)- v(s)}_{L^2(\bO)}^2 ds - \int_0^t \norm{\nabla u(s)}_{L^2(\bO)}^2 \norm{u(s)- v(s)}_{L^2(\bO)}^2 ds}\\
					& \leq \abs{\int_0^t \norm{\nabla u_m(s)}_{L^2(\bO)}^2 \big(\norm{u_m(s)- v(s)}_{L^2(\bO)}^2 - \norm{u(s)- v(s)}_{L^2(\bO)}^2\big) ds}\\
					& \quad + \abs{\int_0^t \big(\norm{\nabla u_m(s)}_{L^2(\bO)}^2 - \norm{\nabla u(s)}_{L^2(\bO)}^2\big) \norm{u(s)- v(s)}_{L^2(\bO)}^2 ds}\\
					& \leq \int_0^t \norm{\nabla u_m(s)}_{L^2(\bO)}^2 \big(\norm{u_m(s)- v(s)}_{L^2(\bO)} + \norm{u(s) - v(s)}_{L^2(\bO)}\big) \norm{u_m(s)- u(s)}_{L^2(\bO)} ds\\
					& \quad + \int_0^t \norm{\nabla u_m(s) - \nabla u(s)}_{L^2(\bO)} \norm{u(s)- v(s)}_{L^2(\bO)}^2 ds\\
					& \leq \sup_{t\in[0,T]} \norm{\nabla u_m(t)}_{L^2(\bO)}^2 \big(\norm{u_m(s)- v(s)}_{L^2(\bO)} + \norm{u(s) - v(s)}_{L^2(\bO)}\big) \int_0^t  \norm{u_m(s)- u(s)}_{L^2(\bO)} ds\\
					& \quad + \sup_{t\in[0,T]}\norm{u(t)- v(t)}_{L^2(\bO)}^2 \big(\norm{\nabla u_m(t)}_{L^2(\bO)} + \norm{\nabla u(t)}_{L^2(\bO)}\big)  \left(\int_0^t \norm{\nabla u_m(s) - \nabla u(s)}_{L^2(\bO)} ds\right)\\
					&  \to 0.
				\end{align*}
				Similarly, by using the same approach used in proof of Lemma \ref{Lem-F_3}
				and H\"older's inequality (with exponent $p$ and $p/p-1$), we deduce
				\begin{align*}
					&\abs{\int_0^t \norm{ u_m(s)}_{L^p(\bO)}^p \norm{u_m(s)- v(s)}_{L^2(\bO)}^2 ds - \int_0^t \norm{ u(s)}_{L^p(\bO)}^p \norm{u(s)- v(s)}_{L^2(\bO)}^2 ds}\\
					& \leq \int_0^t \norm{ u_m(s)}_{L^p(\bO)}^p \norm{u_m(s)- u(s)}_{L^2(\bO)}^2 ds\\
					& \quad + \abs{\int_0^t \big(\norm{ u_m(s)}_{L^p(\bO)}^p - \norm{ u(s)}_{L^p(\bO)}^p\big) \norm{u(s)- v(s)}_{L^2(\bO)}^2 ds}\\
					& \leq \sup_{t\in[0,T]}\norm{ u_m(s)}_{L^p(\bO)}^p\int_0^t  \norm{u_m(s)- u(s)}_{L^2(\bO)}^2 ds\\
					& \quad + p\sup_{t\in[0,T]} \left(\norm{ u_m(t)}_{L^p(\bO)} + \norm{ u(t)}_{L^p(\bO)}\right)^{p-1}\norm{u(t)- v(t)}_{L^2(\bO)}^2  \int_0^t \norm{ u_m(s) - u(s)}_{L^p(\bO)}  ds\\
					& \to 0.
				\end{align*}
				Therefore, the above convergences yield
				\begin{align*}
					&\limsup_{m\to \infty} \int_0^t \bigg[\norm{\nabla u_m(s)}_{L^2(\bO)}^2 + \frac{1}{2} \big(\norm{\nabla u_m(s)}_{L^2(\bO)} + \norm{\nabla v(s)}_{L^2(\bO)}\big)^2 \norm{ v(s)}_{L^2(\bO)}^2 +\norm{u_m(s)}_{L^p(\bO)}^p\\
					& \quad  + C(p, \abs{\bO})\big(\norm{u_m(s)}_{L^p(\bO)}^{p-1} + \norm{v(s)}_{L^p(\bO)}^{p-1}\big)\norm{v(s)}_{L^2(\bO)}^2
					\bigg] \norm{u_m(s)-v(s)}_{L^2(\bO)}^2 ds\\
					&= \int_0^t \bigg[\norm{\nabla u(s)}_{L^2(\bO)}^2 + \frac{1}{2}\big(\norm{\nabla u(s)}_{L^2(\bO)} + \norm{\nabla v(s)}_{L^2(\bO)}\big)^2 \norm{ v(s)}_{L^2(\bO)}^2 +\norm{u(s)}_{L^p(\bO)}^p\\
					& \quad  + C(p,\abs{\bO})\big(\norm{u(s)}_{L^p(\bO)}^{p-1} + \norm{v(s)}_{L^p(\bO)}^{p-1}\big)\norm{v(s)}_{L^2(\bO)}^2
					\bigg] \norm{u(s)-v(s)}_{L^2(\bO)}^2 ds.
				\end{align*}
				By using the fact that $\Gn_{m}(u_{m}) \rightharpoonup \zeta \; \mbox{ in } \; L^{2}(0,T; L^{2}(\bO))$ and \eqref{eqn-strong}, together in \eqref{eqn-loc-m}, we obtain
				\begin{align}
					&\int_0^t (\zeta (s)- \Gn(v(s)), u(s) - v(s))ds\\
					& \quad- \int_0^t\bigg[\norm{\nabla u(s)}_{L^2(\bO)}^2 + \frac{1}{2}\big(\norm{\nabla u(s)}_{L^2(\bO)} + \norm{\nabla v(s)}_{L^2(\bO)}\big)^2 \norm{ v(s)}_{L^2(\bO)}^2 +\norm{u(s)}_{L^p(\bO)}^p\\
					&  \quad  + C(p, \abs{\bO})\big(\norm{u(s)}_{L^p(\bO)}^{p-1} + \norm{v(s)}_{L^p(\bO)}^{p-1}\big)\norm{v(s)}_{L^2(\bO)}^2
					\bigg] \norm{u(s)-v(s)}_{L^2(\bO)}^2ds\\
					&  \leq 0.\label{eqn-loc-f}
				\end{align}
				Since the above estimate is valid for any $v\in L^{\infty}(0, T ; V_n)$, for every $n\in\mathbb{N}$. By a standard density argument, this inequality extends to any $v\in L^\infty(0,T; L^p(\bO) \cap H_0^1(\bO))$.

				Let us set $u=v+\lambda w$, $\lambda>0$, where $w\in L^\infty(0,T; L^p(\bO) \cap H_0^1(\bO)) \cap L^2(0,T; D(A))$. Inserting this expression into \eqref{eqn-loc-f} yields
				\begin{align*}
					&\int_0^t (\zeta (s)- \Gn((u-\lambda w)(s)), w(s))ds\\
					& \quad-\lambda \int_0^t\bigg[\norm{\nabla u(s)}_{L^2(\bO)}^2 + \frac{1}{2}(\norm{\nabla u(s)}_{L^2(\bO)} + \norm{\nabla (u-\lambda w)(s)}_{L^2(\bO)})^2 \norm{ (u-\lambda w)(s)}_{L^2(\bO)}^2 \\
					&\quad +\norm{u(s)}_{L^p(\bO)}^p  + C(p)\big(\norm{u(s)}_{L^p(\bO)}^{p-1} + \norm{(u-\lambda w)(s)}_{L^p(\bO)}^{p-1}\big)\norm{v(s)}_{L^2(\bO)}^2
					\bigg] \norm{w(s)}_{L^2(\bO)}^2ds \\
					& \leq 0.
				\end{align*}
				Passing to the limit $\lambda\to 0$  and applying the hemicontinuity condition \eqref{eqn-hemicont} of $\Gn(\cdot)$, we obtain
				\begin{align}
					\int_0^t (\zeta (s)- \Gn(u(s)), w(s))ds\leq 0.
				\end{align}
				Since $w$ is arbitrary, we finally deduce
				\begin{align}
					\zeta(t)=\Gn(u(t)) \ \text{ in }\ L^2(\bO),\ \text{ for a.e. }\ t\in[0,T],
				\end{align}
				as required.
				
				Finally, upon passing to the limits in \eqref{eqn-fd}, from the convergences \eqref{eqn-Convergences-weak} and \eqref{eqn-strong-convergence}, we deduce  
				\begin{align}
					\int_{0}^{T}\left( \frac{\partial u(t)}{\partial t}  - \Gn(u(t)),v(t)\right)  \,d t = 0,\label{eqn-limiting-strong}
				\end{align}
				for all $v\in C([0,T]; L^p(\bO) \cap H_0^1(\bO))$. Using  the fact that $C([0,T]; L^p(\bO) \cap H_0^1(\bO))$ is dense in $L^2(0,T; L^2(\bO))$ and $\displaystyle\frac{\partial u}{\partial t} - \Gn(u) \in L^2(0,T; L^2(\bO))$. The above equality holds true, for every $v \in L^2(0,T; L^2(\bO))$.
				
				\vskip 1mm
				\noindent
				\textit{Step 7.} By utilizing the convergence properties available to us and weakly lower semicontinuous property, the following proposition shows that the sequence $\{P_m(\abs{u_m}^{p-2}u_m)\}_{m\in\N}$ converges weakly to $\abs{u}^{p-2}u$ in $ L^{2}(0,T; L^{2}(\bO))$, so that $u\in L^{2p-2}(0,T;L^{2p-2}(\bO))$.
				
				\begin{proposition}\label{Prop-u_m-L2p-2}
					The sequence $\{P_m(\abs{u_m}^{p-2}u_m)\}_{m\in\N}$ converges weakly to $\abs{u}^{p-2}u$ in\break $ L^{2}(0,T; L^{2}(\bO))$, as $m\to\infty$.
				\end{proposition}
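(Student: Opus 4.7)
The strategy is to first identify the weak limit of the \emph{unprojected} sequence $\{|u_m|^{p-2}u_m\}_{m\in\N}$ in the larger space $L^{p'}((0,T)\times\bO)$ (with $p'=p/(p-1)$), and then to reconcile this with the weak $L^2$-limit $\xi$ coming from \eqref{eqn-Convergences-weak}, by exploiting smooth eigenfunctions of $-\Delta$ as test functions to bridge the two dualities.

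For the first step, the pointwise convergence in \eqref{eqn-strong-convergence} together with the continuity of $s\mapsto |s|^{p-2}s$ yields $|u_m|^{p-2}u_m\to |u|^{p-2}u$ a.e.\ on $(0,T)\times\bO$. Combined with the uniform estimate
\begin{align}
\||u_m|^{p-2}u_m\|_{L^{p'}(\bO)}^{p'}=\|u_m\|_{L^p(\bO)}^p\leq C,
\end{align}
that follows from \eqref{eqn-estimate-Lp+H01}, the Lions Lemma (see Appendix) delivers
\begin{align}
|u_m|^{p-2}u_m\rightharpoonup |u|^{p-2}u\ \text{ in }\ L^{p'}((0,T)\times\bO).
\end{align}

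For the second step I would test the convergence $P_m(|u_m|^{p-2}u_m)\rightharpoonup\xi$ in $L^2$ against functions of the form $\psi(t,x)=\chi_{[a,b]}(t)w_k(x)$, where $w_k$ is the $k$-th eigenfunction of the Dirichlet-Laplacian. Choosing $n\in\N$ with $\lambda_k<2^{n+1}$ gives $w_k\in V_n\subset V_m$, and since $P_m$ is the orthogonal projection onto $V_m$, one has $P_mw_k=w_k$ for all $m\geq n$. By self-adjointness of $P_m$,
\begin{align}
\int_a^b (P_m(|u_m(t)|^{p-2}u_m(t)),w_k)\,dt=\int_a^b (|u_m(t)|^{p-2}u_m(t),w_k)\,dt,\ \ m\geq n.
\end{align}
Since $w_k$ is smooth, $\psi\in L^2\cap L^p$, so the left-hand side tends to $\int_a^b(\xi(t),w_k)\,dt$ while the right-hand side tends to $\int_a^b(|u(t)|^{p-2}u(t),w_k)\,dt$. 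As $[a,b]\subset[0,T]$ is arbitrary, $(\xi(t),w_k)=(|u(t)|^{p-2}u(t),w_k)$ for a.e.\ $t$ and every $k\in\N$; the density of $\bigcup_n V_n$ in $L^p(\bO)$ (a consequence of Proposition \ref{Prop-Sm}, since $S_m\psi\to\psi$ in $L^p(\bO)$ with $S_m\psi\in V_m$) upgrades this to $\xi(t)=|u(t)|^{p-2}u(t)$ in $L^{p'}(\bO)$ for a.e.\ $t\in(0,T)$. Because $\xi\in L^2((0,T)\times\bO)$, this equality forces $|u|^{p-2}u\in L^2((0,T)\times\bO)$, i.e., $u\in L^{2p-2}(0,T;L^{2p-2}(\bO))$, and establishes the claimed weak convergence.

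The delicate point, and the obstacle that the above route is designed to circumvent, is that for $p>2$ the function $|u|^{p-2}u$ a priori lies only in $L^{p'}\supsetneq L^2$, so one cannot identify $\xi$ simply by combining self-adjointness of $P_m$ with $P_m\psi\to\psi$ in $L^2$ applied to an arbitrary $L^2$ test. Using the eigenfunctions $w_k$, for which $P_mw_k=w_k$ \emph{exactly} once $m$ is large enough, converts the left-hand $L^2$-pairing into an $L^{p'}$--$L^p$ duality pairing, which is precisely the one along which weak convergence of $|u_m|^{p-2}u_m$ has been secured in the previous step.
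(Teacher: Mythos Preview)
Your proof is correct and reaches the same conclusion as the paper, but the mechanism for removing the projection $P_m$ is genuinely different. The paper tests against $\varphi\in C([0,T];H^s(\bO))$ with $s\geq\frac{d(p-2)}{2p}$, splits
\[
\langle P_m(|u_m|^{p-2}u_m)-|u|^{p-2}u,\varphi\rangle
=\langle |u_m|^{p-2}u_m,(P_m-I)\varphi\rangle
+\langle |u_m|^{p-2}u_m-|u|^{p-2}u,\varphi\rangle,
\]
and kills the first term by $\|(P_m-I)\varphi\|_{L^p}\lesssim\|(P_m-I)A^{s/2}\varphi\|_{L^2}\to 0$, using the Sobolev embedding $H^s\hookrightarrow L^p$ and the commutation of $P_m$ with $A^{s/2}$; it then passes to general $\varphi\in L^p((0,T)\times\bO)$ by density and invokes uniqueness of weak limits. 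Your route instead exploits that each eigenfunction $w_k$ is \emph{eventually fixed} by $P_m$, so the projection disappears exactly rather than asymptotically, and then uses the density of $\bigcup_n V_n$ in $L^p(\bO)$ (via $S_m\psi\to\psi$) to identify $\xi$. Your argument is more elementary in that it avoids fractional powers of $A$ and the auxiliary Sobolev embedding; the paper's argument is slightly more robust in that it would survive replacing $P_m$ by any sequence of self-adjoint operators converging strongly to the identity on $H^s$, not just spectral projections. Both rely on the Lions Lemma for the unprojected sequence in the same way.
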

				
				\begin{proof}
					Fix $m\geq 1$. Let $\varphi \in C([0,T];H^s(\bO))$ for $s\geq \frac{d(p-2)}{2p},$ so that $H^s(\bO)\embed L^p(\bO)$.  We consider
					\begin{align*}
						\langle P_m(\abs{u_m}^{p-2}u_m) - \abs{u}^{p-2}u, \varphi \rangle 
						& = \langle P_m(\abs{u_m}^{p-2}u_m) - \abs{u_m}^{p-2}u_m + \abs{u_m}^{p-2}u_m - \abs{u}^{p-2}u, \varphi \rangle\\
						& = \langle (P_m - I)\abs{u_m}^{p-2}u_m, \varphi \rangle + \langle \abs{u_m}^{p-2}u_m - \abs{u}^{p-2}u, \varphi \rangle\\
						& = \langle \abs{u_m}^{p-2}u_m, (P_m-I)\varphi \rangle + \langle \abs{u_m}^{p-2}u_m - \abs{u}^{p-2}u, \varphi \rangle\\
						& =: \mathcal{K}_1 + \mathcal{K}_2.
					\end{align*}
					Now, by applying H\"older's inequality (with exponent $p/(p-1)$ and $p$), using \eqref{eqn-estimate-Lp+H01} and the fact that $P_m$ and $A^{s/2}$ commutes, we have
					\begin{align*}
						\mathcal{K}_1 & \leq \norm{u_m}_{L^p(\bO)}^{p-1} \norm{(P_m - I)\varphi}_{L^p(\bO)} \leq C \norm{u_m}_{L^p(\bO)}^{p-1} \norm{(P_m - I)\varphi}_{H^s(\bO)}\\
						& \leq C( \norm{u_0}_{L^p(\bO) \cap H_0^1(\bO)}) \norm{(P_m - I)A^{s/2} \varphi}_{L^2(\bO)}\\
						& \to 0\ \text{ as }\ m\to\infty.
					\end{align*}
					The above convergence holds true, for all $t\in[0,T]$. 	Observe from the \eqref{eqn-strong-convergence} that
					\begin{align*}
						\Nn(u_m(t,x)) 
						\to 
						\Nn(u((t,x)))\ \text{ for  a.e.}\ (t,x) \in (0,T)\times \bO.
					\end{align*}
					Moreover, by using \eqref{eqn-estimate-Lp+H01}, we deduce
					\begin{align*}
						\norm{\Nn(u_m)}_{L^{\frac{p}{p-1}}(0,T; L^{\frac{p}{p-1}}(\bO))} 
						& = \norm{\abs{u_m}^{p-2}u_m}_{L^{\frac{p}{p-1}}(0,T; L^{\frac{p}{p-1}}(\bO))} = \int_0^T \norm{\abs{u_m(t)}^{p-2}u_m(t)}_{L^{\frac{p}{p-1}}(\bO)}^{\frac{p}{p-1}} dt\\
						&  = \int_0^T \norm{u_m(t)}_{L^p(\bO)}^{p} dt  \leq C(T,\norm{u_0}_{L^p(\bO) \cap H_0^1(\bO)}) < \infty.
					\end{align*}
					Note also that $u\in L^{\infty}(0,T;L^p(\bO))$. Then, by the Lions Lemma \ref{Lem-Lions}, we assert 
					\begin{align}\label{eqn-N(u_m)-conv}
						\Nn(u_m) \rightharpoonup \Nn(u) \ \text{ in }\ L^{\frac{p}{p-1}}(0,T; L^{\frac{p}{p-1}}(\bO)).
					\end{align}
					Thus, using the above convergence \eqref{eqn-N(u_m)-conv}, we also have
					\begin{align*}
						\mathcal{K}_2 = \langle \abs{u_m}^{p-2}u_m - \abs{u}^{p-2}u, \varphi \rangle \to 0\ \text{ as }\ m\to\infty,
					\end{align*}
					for all $\phi\in C([0,T];H^s(\bO))$. 	It implies that 
					\begin{align}\label{eqn-P_m(N(u_m))-convergence}
						\langle P_m(\abs{u_m}^{p-2}u_m) - \abs{u}^{p-2}u, \varphi \rangle \to 0,\ \text{ for every }\ \varphi\in C([0,T]; H^s(\bO)).
					\end{align}
					Since $C([0,T]; H^s(\bO))$ is dense in $L^{p}(0,T; L^p(\bO)),$ for every $s \geq \frac{d(p-2)}{2p}$, it implies for a given $\eps>0$, there exists a sequence $\{\varphi_\eps\}_{\eps>0}$ such that 
					\begin{align}\label{eqn-density-converg}
						\|\varphi_\eps - \varphi\|_{L^p(0,T;L^p(\bO))} <\eps.
					\end{align}
					Therefore, using H\"older's inequality (with exponent in $p/(p-1)$ and $p$), we write
					\begin{align*}
						&	\abs{\langle P_m(\abs{u_m}^{p-2}u_m) - \abs{u}^{p-2}u, \varphi \rangle} \\
						& \leq \abs{\langle P_m(\abs{u_m}^{p-2}u_m) - \abs{u}^{p-2}u, \varphi - \varphi_\eps \rangle} + \abs{\langle P_m(\abs{u_m}^{p-2}u_m) - \abs{u}^{p-2}u, \varphi_\eps \rangle}\\
						& \leq \norm{P_m(\abs{u_m}^{p-2}u_m) - \abs{u}^{p-2}u}_{L^{\frac{p}{p-1}}(0,T; L^{\frac{p}{p-1}}(\bO))} \norm{\varphi - \varphi_\eps}_{L^p(0,T;L^p(\bO))}\\
						&\quad + \abs{\langle P_m(\abs{u_m}^{p-2}u_m) - \abs{u}^{p-2}u, \varphi_\eps \rangle}.
					\end{align*}
					Thanks to convergences \eqref{eqn-Convergences-weak} and \eqref{eqn-P_m(N(u_m))-convergence}, we obtain the following result:
					\begin{align*}
						\abs{\langle P_m(\abs{u_m}^{p-2}u_m) - \abs{u}^{p-2}u, \varphi \rangle} \to 0, \ \text{ for every }\ \varphi \in L^p(0,T; L^p(\bO)),
					\end{align*} 
					so that 
					\begin{align*}
						P_m(\abs{u_m}^{p-2}u_m) \rightharpoonup \abs{u}^{p-2}u\ \text{ in }\ L^{\frac{p}{p-1}}(0,T; L^{\frac{p}{p-1}}(\bO)).
					\end{align*}
					On the other hand, from \eqref{eqn-Convergences-weak}, we also have
					\begin{align*}
						P_m(\abs{u_m}^{p-2}u_m) \rightharpoonup \xi \ \text{ in }\ L^2(0,T;L^2(\bO)).
					\end{align*}
					Consequently, the fact that weak limits are unique asserts that $\xi = \abs{u}^{p-2}u$ and 
					\begin{align*}
						P_m(\abs{u_m}^{p-2}u_m) \rightharpoonup \abs{u}^{p-2}u\ \text{ in }\ L^2(0,T;L^2(\bO)).
					\end{align*}
					Hence the sequence $P_m(\abs{u_m}^{p-2}u_m)$ converges weakly to $\abs{u}^{p-2}u$ in $L^{2}(0,T; L^{2}(\bO))$, $m\to\infty$.
				\end{proof}
				
				\begin{remark}\label{Rmk-L{2p-2}-bdd}
					Proposition \ref{Prop-u_m-L2p-2}, \eqref{eqn-D(A)-est} and the weakly lower semicontinuity property of norms yield
					\begin{align*}
						\norm{\abs{u}^{p-2}u}_{L^2(0,T;L^2(\bO))}^2 \leq \liminf_{m\to \infty} \norm{P_m(\abs{u_m}^{p-2}u_m)}_{L^2(0,T;L^2(\bO))}^2 < \infty.
					\end{align*}
					Therefore, we have 
					\begin{align*}
						\int_0^T \norm{u(t)}_{L^{2p-2}(\bO)}^{2p-2} dt \leq C(T,\norm{u_0}_{L^p(\bO) \cap H_0^1(\bO)})< \infty.
					\end{align*}
				\end{remark}
				
				\vspace{1mm}
				\noindent
				\textit{Step 8.} Similar to the previous step, by an application of the Lions Lemma \ref{Lem-Lions}, we now show that $\{\abs{u_m}^{\frac{p-2}{2}}\nabla u_m)\}_{m\in\N}$ converges weakly to $\abs{u}^{\frac{p-2}{2}}\nabla u$ in $ L^{2}(0,T; L^{2}(\bO))$.
				
				\begin{proposition}\label{Prop-u_m-grad-u_m}
					The sequence $\{\abs{u_m}^{\frac{p-2}{2}}\nabla u_m)\}_{m\in\N}$ converges weakly to $\abs{u}^{\frac{p-2}{2}}\nabla u$ in \break $L^{2}(0,T; L^{2}(\bO))$, as $m\to \infty$.
				\end{proposition}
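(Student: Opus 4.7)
The proof follows the same template used for Proposition \ref{Prop-u_m-L2p-2}, but is in fact cleaner because the weak $L^{2}$-bound on $\{|u_m|^{\frac{p-2}{2}}\nabla u_m\}_{m\in\N}$ has already been obtained in Proposition \ref{Prop-u_m-L2-D(A)}, and we do not have to cope with the extra projection $P_m$ in front. The plan is to combine this uniform $L^{2}(0,T;L^{2}(\bO))$-bound with the a.e.\ pointwise convergences in \eqref{eqn-strong-convergence} and then invoke the Lions (a.k.a.\ Strauss) Lemma \ref{Lem-Lions} to upgrade the a.e.\ limit to a weak $L^{2}$-limit; finally, we appeal to the uniqueness of weak limits to identify this limit with the already-extracted function $\vartheta$ from \eqref{eqn-Convergences-weak}.

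More concretely, I will proceed in three short steps. First, from \eqref{eqn-strong-convergence} both $u_{m}(t,x)\to u(t,x)$ and $\nabla u_{m}(t,x)\to \nabla u(t,x)$ for a.e.\ $(t,x)\in[0,T]\times\bO$ (passing to a further subsequence if necessary). Since the map $(s,\xi)\mapsto |s|^{\frac{p-2}{2}}\xi$ is continuous on $\R\times\R^d$, this gives
\begin{align*}
|u_m(t,x)|^{\frac{p-2}{2}}\nabla u_m(t,x)\longrightarrow |u(t,x)|^{\frac{p-2}{2}}\nabla u(t,x)\quad\text{for a.e. }(t,x)\in[0,T]\times\bO.
\end{align*}
Second, from \eqref{eqn-D(A)-est} in Proposition \ref{Prop-u_m-L2-D(A)} we have, uniformly in $m$,
\begin{align*}
\big\||u_m|^{\frac{p-2}{2}}\nabla u_m\big\|_{L^{2}(0,T;L^{2}(\bO))}^{2}\leq C(T,\|u_0\|_{L^p(\bO)\cap H_0^1(\bO)}).
\end{align*}

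Third, the Lions Lemma \ref{Lem-Lions}, applied with the exponent $q=2$, then yields that $|u|^{\frac{p-2}{2}}\nabla u\in L^{2}(0,T;L^{2}(\bO))$ and
\begin{align*}
|u_m|^{\frac{p-2}{2}}\nabla u_m\rightharpoonup |u|^{\frac{p-2}{2}}\nabla u\quad\text{in }L^{2}(0,T;L^{2}(\bO)),\quad \text{as }m\to\infty.
\end{align*}
Comparing with the convergence $|u_m|^{\frac{p-2}{2}}\nabla u_m\rightharpoonup\vartheta$ already obtained in \eqref{eqn-Convergences-weak} and invoking uniqueness of weak limits, we conclude $\vartheta=|u|^{\frac{p-2}{2}}\nabla u$, which completes the proof.

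There is no genuine obstacle here: the only point to check with a bit of care is the measurability and pointwise convergence statement, which is automatic because the a.e.\ convergences of $u_m$ and $\nabla u_m$ in \eqref{eqn-strong-convergence} can be assumed (after a diagonal extraction) to hold on a common full-measure subset of $[0,T]\times\bO$, and because the nonlinearity $s\mapsto |s|^{\frac{p-2}{2}}$ is continuous for $p\geq 2$, so no further cancellation or sign argument is required.
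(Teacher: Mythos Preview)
Your argument is correct, and in fact more direct than the paper's. The paper first passes through the weaker space $L^{\frac{p}{p-1}}(0,T;L^{\frac{p}{p-1}}(\bO))$: from the $L^2$-bound \eqref{eqn-D(A)-est} it deduces a uniform $L^{\frac{p}{p-1}}$-bound (since $\frac{p}{p-1}\le 2$), separately verifies via H\"older that the candidate limit $|u|^{\frac{p-2}{2}}\nabla u$ lies in $L^{\frac{p}{p-1}}(0,T;L^{\frac{p}{p-1}}(\bO))$ using the regularity $u\in L^{2p-2}(0,T;L^{2p-2}(\bO))\cap L^\infty(0,T;H_0^1(\bO))$ from Remark \ref{Rmk-L{2p-2}-bdd}, applies the Lions Lemma with $q=\frac{p}{p-1}$, and only then compares with the weak $L^2$-limit $\vartheta$ to identify the two. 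You bypass this detour by applying the Lions Lemma directly with $q=2$, which is legitimate because the uniform $L^2$-bound is already available and the limit automatically inherits the $L^2$-membership by Fatou's lemma. The only minor quibble is that the Lions Lemma, as stated in Lemma \ref{Lem-Lions}, formally \emph{assumes} $g\in L^q(\bO_T)$ rather than concluding it; you should insert a one-line appeal to Fatou to justify $|u|^{\frac{p-2}{2}}\nabla u\in L^2(0,T;L^2(\bO))$ before invoking the lemma. With that cosmetic fix, your route is cleaner and avoids the intermediate $L^{\frac{p}{p-1}}$ computation entirely.
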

				
				\begin{proof}
					Fix $m\in\N$. Observe from the \eqref{eqn-strong-convergence} that
					\begin{align*}
						\abs{u_m(t,x)}^{\frac{p-2}{2}}\nabla u_m(t,x) \to \abs{u(t,x)}^{\frac{p-2}{2}}\nabla u(t,x)\ \text{ for  a.e.}\ (t,x)\in (0,T)\times \bO.
					\end{align*} 
					By utilizing the fact $p/(p-1) \leq 2$, for every $2\leq p < \infty$ and \eqref{eqn-D(A)-est}, we immediately have
					\begin{align}
						\big\|\abs{u_m(t)}^{\frac{p-2}{2}}\nabla u_m(t)\big\|_{L^{\frac{p}{p-1}}(0,T;L^{\frac{p}{p-1}}(\bO))}
						& \leq C\big\|\abs{u_m(t)}^{\frac{p-2}{2}}\nabla u_m(t)\big\|_{L^{2}(0,T;L^{2}(\bO))}\\
						& \leq C(T, \norm{u_0}_{L^p(\bO) \cap H_0^1(\bO)}) < \infty.
					\end{align}
					On the other hand
					\begin{align}\label{eqn-u-grad-u_1}
						\int_0^T \big\|\abs{u(t)}^{\frac{p-2}{2}}\nabla u(t)\big\|_{L^{\frac{p}{p-1}}(\bO)}^{\frac{p}{p-1}} dt =  \int_0^T \int_\bO \abs{u(t,x)}^{\frac{p(p-2)}{2p-2}} \abs{\nabla u(t,x)}^{\frac{p}{p-1}} dx dt.
					\end{align}
					By applying H\"older's inequality (with exponents $(2p-2)^2/(p^2-p)$ and $(2p-2)^2/(3p^2 -6p +4)$), we deduce
					\begin{align*}
						\big\|\abs{u}^{\frac{p-2}{2}}\nabla u\big\|_{L^{\frac{p}{p-1}}(\bO)}^{\frac{p}{p-1}} 
						& \leq \left( \int_\bO\abs{u(x)}^{2p-2}dx\right)^{\frac{p^2-2p}{(2p-2)^2}}  \left(\int_\bO \abs{\nabla u(x)}^{\frac{4p^2-4p}{3p^2 - 6p +4}} dx\right)^{\frac{3p^2 - 6p +4}{(2p-2)^2}}\\
						& = \|u\|_{L^{2p-2}(\bO)}^{\frac{p^2-2p}{(2p-2)^2}} \|\nabla u\|_{L^{\frac{4p^2-4p}{3p^2 - 6p +4}}(\bO)}^{\frac{p}{p-1}}.
					\end{align*}
					For every $2\leq p <\infty$, it holds that $(p^2-p)/(2p-2)^2 \leq 2p-2 $ and $4(p^2-p)/(3p^2 -6p +4)\leq 2$. Thus, using the Sobolev embedding, Remark \ref{Rmk-L{2p-2}-bdd} and the above inequality in \eqref{eqn-u-grad-u_1} imply 
					\begin{align*}
						\int_0^T \big\|\abs{u(t)}^{\frac{p-2}{2}}\nabla u(t)\big\|_{L^{\frac{p}{p-1}}(\bO)}^{\frac{p}{p-1}} dt
						& \leq C \|u\|_{L^{\frac{p^2-2p}{2p-2}}(0,T; L^{2p-2}(\bO))}^{\frac{p^2-2p}{2p-2}} \norm{u}_{L^\infty(0,T; H_0^1(\bO))}^{\frac{p}{p-1}}\\
						& \leq C \|u\|_{L^{2p-2}(0,T; L^{2p-2}(\bO))}^{\frac{p^2-2p}{2p-2}} \norm{u}_{L^\infty(0,T; H_0^1(\bO))}^{\frac{p}{p-1}}\\
						& \leq C(T,\norm{u_0}_{L^p(\bO) \cap H_0^1(\bO)})<\infty.
					\end{align*}
					Therefore, an application of the Lions Lemma \ref{Lem-Lions} gives 
					\begin{align*}
						\abs{u_m}^{\frac{p-2}{2}}\nabla u_m \rightharpoonup \abs{u}^{\frac{p-2}{2}}\nabla u\ \text{ in }\ L^{\frac{p}{p-1}}(0,T; L^{\frac{p}{p-1}}(\bO)).
					\end{align*}
					But from \eqref{eqn-Convergences-weak}, we have
					\begin{align*}
						\abs{u_m}^{\frac{p-2}{2}}\nabla u_m \rightharpoonup \vartheta \ \text{ in }\ L^2(0,T; L^2(\bO)).
					\end{align*}
					By utilizing the fact that weak limits are unique, we obtain that $\vartheta = \abs{u}^{\frac{p-2}{2}}\nabla u$. Thus
					\begin{align*}
						\abs{u_m}^{\frac{p-2}{2}}\nabla u_m \rightharpoonup \abs{u}^{\frac{p-2}{2}}\nabla u\ \text{ in }\ L^{2}(0,T; L^{2}(\bO)).
					\end{align*}
					Hence the sequence $\abs{u_m}^{\frac{p-2}{2}}\nabla u_m$ converges weakly to $\abs{u}^{\frac{p-2}{2}}\nabla u$ in $L^{2}(0,T; L^{2}(\bO))$, as $m\to\infty$.
				\end{proof}
				
				\begin{remark}\label{Rmk-u_m-grad-u_m}
					Observe from Proposition \ref{Prop-u_m-grad-u_m}, \eqref{eqn-D(A)-est} and the weakly lower semicontinuity property of norms that
					\begin{align*}
						\big\|\abs{u}^{\frac{p-2}{2}}\nabla u\big\|_{L^2(0,T;L^2(\bO))}^2 \leq \liminf_{m\to \infty} \big\|\abs{u_m}^{\frac{p-2}{2}}\nabla u_m\big\|_{L^2(0,T;L^2(\bO))}^2 \leq C\big(T,\norm{u_0}_{L^p(\bO) \cap H_0^1(\bO)}\big)< \infty.
					\end{align*}
				\end{remark}
				
				\vspace{1mm}
				\noindent
				\textit{Step 9.} In this step, we show that $u\in C([0,T]; L^p(\bO) \cap H_0^1(\bO))$. Note that, by the Sobolev embedding Theorem \cite[Ch.\;5, Sec.\;6]{DM+DZ-98}, we have
				\begin{align*}
					H_0^1(\bO) \embed L^p(\bO) & \ \text{ for }\ d \geq 2 \ \text{ and }\ 2 \leq p \leq\frac{2d}{d-2};\\
					H_0^1(\bO) \embed L^p(\bO) & \ \text{ for }\ d =1,2  \ \text{ and }\ 2 \leq p < \infty.
				\end{align*}
				For the above two cases, since $u\in L^{\infty}(0,T; L^p(\bO) \cap H_0^1(\bO))$ and $u'\in L^{2}(0,T; L^{2}(\bO))$, an application of Theorem \ref{Thm-Abs-cont} yields $u\in C([0,T];H_0^1(\bO))$.
				
				Therefore, for $d\geq 2,$ it is enough to show that $u \in \; C([0,T]; L^p(\bO) \cap H_0^1(\bO))$ for $\frac{2d}{d-2} <p<\infty$. Let us first fix $\frac{2d}{d-2} < p < \infty$ and define $z := \abs{u}^{\frac{p}{2}}$. Then,  we observe that
				\begin{align*}
					\norm{z}_{L^2(\bO)}^2 = \||u|^{\frac{p}{2}}\|_{L^2(\bO)}^2 = \norm{u}_{L^p(\bO)}^p.
				\end{align*}
				By using the fact that $u\in L^{\infty}(0,T; L^p(\bO) )$, we have
				\begin{align*}
					z \in L^{\infty}(0,T; L^2(\bO) ).
				\end{align*}
				Moreover, consider
				\begin{align}
					\norm{\nabla z}_{L^2(\bO)}^2 
					& = \int_{\bO}\abs{\nabla \big((\abs{u(x)}^2)^{\frac{p}{4}}\big)}^2 dx
					=  \frac{p^2}{4^2} \int_{\bO}\abs{(2 u(x) \nabla u(x)) (\abs{u(x)}^2)^{\frac{p-4}{4}}}^2 dx \\
					& = \frac{p^2}{4} \int_{\bO}\abs{u(x)}^{p-4} \abs{u(x)}^2 \abs{\nabla u(x)}^2 dx
					=  \frac{p^2}{4} \int_{\bO}\abs{u(x)}^{p-2} \abs{\nabla u(x)}^2 dx\\
					& = \frac{p^2}{4} \big\|\abs{u}^{\frac{p-2}{2}} \nabla u\big\|_{L^2(\bO)}^2. \label{eqn-grad-|u|^{p/2}}
				\end{align}
				Integrating from $0$ to $T$, we deduce
				\begin{align*}
					\int_0^T \norm{\nabla z(t)}_{L^2(\bO)}^2 dt = \norm{ z}_{L^2(0,T;H_0^1(\bO))}^2 = 
					& \frac{p^2}{4}\int_0^T \big\|\abs{u(t)}^{\frac{p-2}{2}} \nabla u(t)\big\|_{L^2(\bO)}^2 dt.
				\end{align*}
				Moreover, from Remark \ref{Rmk-u_m-grad-u_m}, we also have
				\begin{align*}
					\int_{0}^{T}\|\abs{u(t)}^{\frac{p-2}{2}} \nabla u(t)\|_{L^2(\bO)}^2 dt
					\leq C\big(T, \norm{u_0}_{L^p(\bO) \cap H_0^1(\bO)}\big).
				\end{align*}
				Thus, $z \in L^2(0,T; H_0^1(\bO))$. Moreover, by definition of $z$ and Remark \ref{Rmk-L{2p-2}-bdd}, we write
				\begin{align*}
					\int_0^T\|z(t)\|_{L^{\frac{4p-4}{p}}(\bO)}^{\frac{4p-4}{p}}dt& = \int_0^T\|u(t)\|_{L^{2p-2}(\bO)}^{2p-2}dt \leq C(T, \norm{u_0}_{L^p(\bO) \cap H_0^1(\bO)}).
				\end{align*}
				On the other hand, if we consider the time derivative of  $z$, i.e.,
				\begin{align*}
					\frac{\partial z(t)}{\partial t} = \frac{\partial}{\partial t} \left((\abs{u(t)}^2)^{\frac{p}{4}}\right) 
					= \frac{p}{4}\left(\abs{u(t)}^2\right)^{\frac{p-4}{4}} (2 u(t) \frac{\partial u(t)}{\partial t}) = \frac{p}{2}\abs{u(t)}^{\frac{p-4}{2}} u(t) \frac{\partial u(t)}{\partial t}.
				\end{align*}
				By taking $L^q(\bO)-$norm on both sides, H\"older's inequality yields
				\begin{align*}
					\norm{\frac{\partial z(t)}{\partial t}}_{L^q(\bO)} 
					& = \frac{p}{2} \norm{\abs{u(t)}^{\frac{p-2}{2}} \frac{\partial u(t)}{\partial t}}_{L^q(\bO)}\\
					&\leq \frac{p}{2} \norm{\frac{\partial u(t)}{\partial t}}_{L^2(\bO)}  \|\abs{u(t)}^{\frac{p-2}{2}}\|_{L^{\frac{2q}{2-q}}(\bO)} \\
					& = \frac{p}{2} \norm{\frac{\partial u(t)}{\partial t}}_{L^2(\bO)}  \norm{u(t) }_{L^{\frac{q(p-2)}{2-q}}(\bO)}^{\frac{p-2}{2}}.
				\end{align*}
				To use the fact that $u\in L^{2p-2}(0,T; L^{2p-2}(\bO))$, we choose 
				$$\frac{q(p-2)}{2-q} = 2p -2 \implies \frac{2-q}{q} = \frac{p -2}{2p-2} \implies q = \frac{4p-4}{3p-4}>1.$$ 
				Therefore, we assert 
				\begin{align*}
					\norm{\frac{\partial z(t)}{\partial t}}_{L^{\frac{4p-4}{3p-4}}(\bO)} = \frac{p}{2} \norm{\frac{\partial u(t)}{\partial t}}_{L^2(\bO)}  \norm{u(t) }_{L^{2p-2}(\bO)}^{\frac{p-2}{2}}.
				\end{align*}
				Since $\frac{\partial u}{\partial t} \in L^2(0,T; L^2(\bO))$ and $u\in L^{2p-2}(0,T; L^{2p-2}(\bO))$, it implies
				\begin{align*}
					\int_0^T\norm{\frac{\partial z(t)}{\partial t}}_{L^{\frac{4p-4}{3p-4}}(\bO)}^{\frac{4p-4}{3p-4}}dt
					&\leq \left(\frac{p}{2}\right)^{\frac{4p-4}{3p-4}}\bigg(\int_0^T \norm{\frac{\partial u(t)}{\partial t}}_{L^2(\bO)}^2dt\bigg)^{\frac{2p-2}{3p-4}}\left(\int_0^T\norm{u(t) }_{L^{2p-2}(\bO)}^{2p-2}dt\right)^{\frac{p-2}{3p-4}}\\
					& = \left(\frac{p}{2}\right)^{\frac{4p-4}{3p-4}} \norm{\frac{\partial u}{\partial t}}_{L^2(0,T; L^2(\bO))}^{\frac{4p-4}{3p-4}}\norm{u }_{L^{2p-2}(0,T; L^{2p-2}(\bO))}^{\frac{(p-2)(2p-2)}{3p-4}}  < \infty.
				\end{align*}
			Since $0\in L^2(0,T; H^{-1}(\bO))$	and $$z \in L^{\frac{4p-4}{p}}(0,T; L^{\frac{4p-4}{p}}(\bO))  \cap L^2(0,T; H_0^1(\bO)),$$	the above estimate implies   $$\frac{\partial z}{\partial t} \in L^{\frac{4p-4}{3p-4}}(0,T; L^{\frac{4p-4}{3p-4}}(\bO))+ L^2(0,T; H^{-1}(\bO)),$$
				Now by considering the embedding
				$$H_0^1(\bO) \embed L^2(\bO) \embed H^{-1}(\bO) \embed L^{\frac{4p-4}{3p-4}}(\bO) + H^{-1}(\bO),$$
				and
				$$L^{\frac{4p-4}{p}}(\bO) \embed L^2(\bO) \embed L^{\frac{4p-4}{3p-4}}(\bO) \embed L^{\frac{4p-4}{3p-4}} (\bO)+ H^{-1}(\bO),$$
				as a consequence of Theorem \ref{Thm-Abs-cont}, we deduce that
				\begin{itemize}
					\item[(i)] $z = \abs{u}^{\frac{p}{2}}$ is in the space $C([0,T];L^2(\bO))$, i.e., $\abs{u} \in C([0,T];L^p(\bO))$,
					\item[(ii)] the function $[0,T] \ni t \mapsto  \fourIdx{}{}{2}{L^2(\bO)}{\|z(t)\|} \in \R$ is absolutely continuous  and
					\begin{align*}
						\frac{d}{dt}\fourIdx{}{}{2}{L^2(\bO)}{\|z(t)\|}
						& = 2\,\fourIdx{}{L^{\frac{4p-4}{p}} (\bO)}{}{L^{\frac{4p-4}{3p-4}} (\bO)}{\left\langle z(t), z'(t)\right\rangle}.
					\end{align*}
				\end{itemize}
				This implies that the function $[0,T] \ni t \mapsto \|u(t)\|_{L^p(\bO)}^p \in \R$ is absolutely continuous and it satisfies
				\begin{align*}
					\frac{1}{p}\frac{d}{dt}{\|u(t)\|_{L^p(\bO)}^p}
					& = \left( \abs{u(t)}^{p-2}u(t), \frac{\partial u(t)}{\partial t}\right),
				\end{align*}
				for a.e. $t\in[0,T]$. Since $u\in L^{2p-2}(0,T;L^{2p-2}(\bO))$, one can choose $\varphi=|u|^{p-2}u$ in \eqref{eqn-test} and use the above fact to obtain
				\begin{align*}
					\|u(t)\|_{L^p(\bO)}^p 
					& + p(p-1)\int_0^t\||u(s)|^{\frac{p-2}{2}}\nabla u(s)\|_{L^p(\bO)}^pds+p\int_0^t\|u(s)\|_{L^{2p-2}(\bO)}^{2p-2}ds\\
					& =\|u_0\|_{L^p(\bO)}^p + p\int_0^t\big( \norm{\nabla u(s)}_{L^2(\bO)}^2 + \norm{u(s)}_{L^p(\bO)}^p \big)\|u(s)\|_{L^p(\bO)}^pds,
				\end{align*}
				for all $t\in[0,T]$. Note that from the above argument, for any $t\in[0,T]$, we have $\norm{u(t)}_{L^p(\bO)} \to \norm{u_0}_{L^p(\bO)}$, as $t\to 0$. Let us now show that $u\in C_{w}([0,T]; L^p(\bO)\cap H_0^1(\bO))$. From  \eqref{eqn-strong}, we observe that
				\begin{align*}
					u\in C([0,T];L^2(\bO)) \embed C_{w}([0,T];L^2(\bO)).
				\end{align*}
				On the other hand, we also have $u \in L^\infty(0,T; L^p(\bO)\cap H_0^1(\bO))$.
				By an application of Theorem \ref{Strauss-Lemma}, i.e., choosing $\X = L^p(\bO)\cap H_0^1(\bO)$ and $\Y = L^2(\bO)$, we immediately have 
				\begin{align*}
					u\in C_{w}([0,T]; L^p(\bO)\cap H_0^1(\bO)).
				\end{align*}
				Since $u\in C_{w}([0,T]; L^p(\bO))$, it follows that $u(t) \rightharpoonup u_0$ in $L^p(\bO)$, as $t\to 0$. Since every Hilbert space and $L^p(\bO)$ spaces, for $p>1$, are uniformly convex \cite[see Section 3]{JAC-1926} (or \cite[Section 3.7]{HB-11}), by the Radon-Riesz property (see Proposition \ref{Prop-Radon-Riesz}), it implies, $u(t) \to u_0$ in $L^p(\bO)$ as $t\to 0$, so that $u \in C([0,T];L^p(\bO))$.
				
				Hence, the existence of a strong solution to \eqref{eqn-main-problem} is established.
			\end{proof}
			

			\subsection{Invariance of manifold} 
			Let us now address the second claim of Theorem \ref{Thm-main-wellposed}, i.e., the invariance of strong solutions of the constrained problem \eqref{eqn-main-problem} in the manifold $\bM$ .
			
			\begin{proof}
				Let us take $u_0 \in \bM$. For $t\in[0,T)$, applying the Absolute Continuity Lemma \ref{Thm-Abs-cont} and integrating from $0$ to $t$, we get 
				\begin{align*}
					\frac{1}{2}(\norm{u(t)}_{L^2(\bO)}^2 -1) 
					&= \frac{1}{2}(\norm{u_0}_{L^2(\bO)}^2 -1) + \int_0^t \left(\frac{\partial u(s)}{\partial s}, u(s)\right) ds = \int_0^t \left(\Gn(u(s)), u(s)\right)ds\\
					& = -\int_0^t \norm{\nabla u(s)}_{L^2(\bO)}^2ds - \int_0^t \left(|u(s)|^{p-2}u(s) , u(s)\right) ds\\
					& \quad + \int_0^t \left( \big(\norm{\nabla u(s)}_{L^2(\bO)}^2  + \norm{u(s)}_{L^p(\bO)}^p\big) u(s), u(s)\right)ds\\
					& =  \int_0^t  \big(\norm{\nabla u(s)}_{L^2(\bO)}^2  + \norm{u(s)}_{L^p(\bO)}^p\big) (\norm{u(s)}_{L^2(\bO)}^2 -1)ds.
				\end{align*}
				Let us denote $\Theta(t) = (\norm{u(t)}_{L^2(\bO)}^2 -1)$. Then by applying the variation of constant formula, we obtain
				\begin{align}\label{eqn-equality}
					\Theta(t) = \Theta(0) \exp\left(\int_0^t  \left(\norm{\nabla u(s)}_{L^2(\bO)}^2  + \norm{u(s)}_{L^p(\bO)}^p \right)ds\right) = 0,
				\end{align}
				where it is justified by the fact that $u\in L^\infty(0,T;L^p(\bO)\cap H_0^1(\bO))$.
				Since $u_0\in\bM$, it implies
				$$\Theta(t) = \norm{u(t)}_{L^2(\bO)}^2 -1 =0.$$
				Hence $u(t)\in\bM$, for every $t\geq 0$ and the strong solution $u$ of the problem \eqref{eqn-main-problem} is invariant in the manifold $\bM$.
			\end{proof}
			
			\begin{proof}[Proof of gradient flow equality \eqref{eqn-Eu}]
				Note that
				\begin{align}
					\nabla_{\bM}\bE(u) 
					& = \pi_u(\nabla\bE(u))=\pi_u(-\Delta u+|u|^{p-2}u)\\
					&=  -\Delta u+|u|^{p-2}u - \big( \norm{\nabla u}_{L^2(\bO)}^2 + \norm{u}_{L^p(\bO)}^p \big)u,\ \text{ for } u\in\bM.\label{eqn-grad}
				\end{align}
				Hence, for $u\in\bM$, we deduce
				\begin{align}
					\|\nabla_{\bM}\bE(u)\|_{L^2(\bO)}^2
					& =\|\nabla\bE(u)\|_{L^2(\bO)}^2+ \big( \norm{\nabla u}_{L^2(\bO)}^2 + \norm{u}_{L^p(\bO)}^p \big)^2\\ 
					& \quad-2 \big( \norm{\nabla u}_{L^2(\bO)}^2 + \norm{u}_{L^p(\bO)}^p \big)\left(\nabla\bE(u),u\right)\\
					& = \|\nabla\bE(u)\|_{L^2(\bO)}^2- \big( \norm{\nabla u}_{L^2(\bO)}^2 + \norm{u}_{L^p(\bO)}^p \big)^2\\
					& = \|\nabla\bE(u)\|_{L^2(\bO)}^2-|\bS(u)|^2, \label{eqn-grad_M-grad-rel}
				\end{align}
				where 
				\begin{align}\label{eqn-S(u)}
					\bS(u)= \norm{\nabla u}_{L^2(\bO)}^2 + \norm{u}_{L^p(\bO)}^p = (\nabla\bE(u),u).
				\end{align}
				Therefore, for $u\in\bM$, we have
				\begin{align*}
					\frac{d}{dt}\bE(u(t)) 
					& = \left(\nabla_{\bM}\bE(u(t)),\frac{\partial u(t)}{\partial t}\right)=\left(\nabla_{\bM}\bE(u(t)),-\nabla_{\bM}\bE(u(t))\right)\\
					& = -\|\nabla_{\bM}\bE(u(t))\|_{L^2(\bO)}^2, \ \mbox{ for a.e. $t\in[0,T]$}
				\end{align*}
				and so
				\begin{align}\label{eqn-energy-eqn}
					\bE(u(t))+\int_0^t\|\nabla_{\bM}\bE(u(s))\|_{L^2(\bO)}^2ds = \bE(u_0),\ t>0.
				\end{align}
				It implies that $\bE(u(t))$ is dissipative. 
			\end{proof}

				\begin{remark}\label{Rmk-partial_t(u)}
				Using \eqref{eqn-grad}, the problem \eqref{eqn-main-problem} can be re-written as
				\begin{align*}
					\left\{
					\begin{aligned}
						\frac{\partial u(t)}{\partial t} & = -\nabla_{\bM}\bE(u(t)),\\
						u(0) & = u_0, \\
						u(t)|_{\partial\bO} & = 0,
					\end{aligned}
					\right.
				\end{align*}
				so that from \eqref{eqn-energy-eqn}, for all $t\geq 0$, we also have 
				\begin{align}\label{eqn-dec-ener}
					\bE(u(t))+\int_0^t\bigg\|\frac{\partial u(s)}{\partial s}\bigg\|_{L^2(\bO)}^2ds= \bE(u_0),
				\end{align}
				where $\nabla_{\bM}$ is gradient of $\bE$ on the tangent $\bM$, i.e., $\nabla_{\bM}\bE(u)=\pi_u(\nabla\bE)$.
			\end{remark}

			\subsection{Uniqueness}\label{Subsec-Uniqueness} 
			Now, let us show the uniqueness of  strong solutions of the constrained problem \eqref{eqn-main-problem}.
			
			\begin{proof}
				Let $u_1, u_2$ be two strong solutions to the problem \eqref{eqn-main-problem} such that
				\begin{align}\label{eqn-reg}
					\left\{
					\begin{aligned}
						u_i & \in C([0,T]; L^p(\bO) \cap H_0^1(\bO)) \cap  L^{2p-2}(0,T;L^{2p-2}(\bO)),\\
						u_i & \in L^2(0,T; D(A))\ \text{ and }\ \frac{\partial u_i}{\partial t} \in L^2(0,T; L^2(\bO)), 	
					\end{aligned}
					\right.
				\end{align}
				for each $i=1,2$. Then, subtracting their corresponding equations, for any $\varphi \in C_0^\infty((0,T);L^2(\bO))$, we have
				\begin{align*}
					& \left(\frac{\partial}{\partial t}(u_1(t)-u_2(t)), \varphi\right) = (\Gn(u_1(t)) - \Gn(u_2(t)), \varphi),
				\end{align*}
				where we have employed that $\Gn(u_i(t)) \in L^2(\bO)$, for a.e. $t\in(0,T)$, which is because of the regularity of each strong solution $u_i,$ from \eqref{eqn-reg}, for each $i=1,2$. By using the Absolute Continuity Lemma \ref{Thm-Abs-cont} and the fact that $ C_0^\infty((0,T); L^2(\bO) )$ is dense in $C([0,T]; L^p(\bO) \cap H_0^1(\bO)) \cap L^2(0,T; D(A))\cap L^{2p-2}(0,T;L^{2p-2}(\bO))$, we deduce
				\begin{equation} \label{eqn-strong-uniqueness}
					\frac{1}{2}\frac{d}{dt}\norm{u_1(t)-u_2(t)}_{L^2(\bO)}^2 = (\Gn(u_1(t)) -\Gn(u_2(t)), u_1(t) -u_2(t)),\ \text{ for a.e. } t\in[0,T].
				\end{equation}
				Utilizing the monotonicity inequality \eqref{eqn-G-mono-inequality} in \eqref{eqn-strong-uniqueness}, we deduce
				\begin{align*}
					&\frac{1}{2}\frac{d}{dt}\norm{u_1(t)-u_2(t)}_{L^2(\bO)}^2\\
					&\leq -\frac{1}{2}\|\nabla(u_1-u_2)\|_{L^2(\bO)}^2-\langle|u_1|^{p-2}u_1 -|u_2|^{p-2}u_2, u_1-u_2\rangle\\
					&\quad + \bigg[\norm{\nabla u_1}_{L^2(\bO)}^2 + \frac{1}{2}(\norm{\nabla u_1}_{L^2(\bO)} + \norm{\nabla u_2}_{L^2(\bO)})^2 \norm{ u_2}_{L^2(\bO)}^2\\
					&\qquad \quad+\norm{u_1}_{L^p(\bO)}^p + C(p,\abs{\bO})\big(\norm{u_1}_{L^p(\bO)}^{p-1} + \norm{u_2}_{L^p(\bO)}^{p-1}\big)\norm{u_2}_{L^2(\bO)}^2 \bigg]\norm{u-u_2}_{L^2(\bO)}^2.
				\end{align*}
				Clearly, using the relation \eqref{eqn-nonlinear-est}, we can write the above inequality as
				\begin{align*}
					&\frac{d}{dt}\norm{u_1(t)-u_2(t)}_{L^2(\bO)}^2+\|\nabla(u_1 (t)- u_2(t))\|_{L^2(\bO)}^2+\frac{1}{2^{p-1}}\|u_1(t)-u_2(t)\|_{L^p(\bO)}^p\\
					& \leq  2 \bigg[\norm{\nabla u_1(t)}_{L^2(\bO)}^2 + \frac{1}{2}(\norm{\nabla u_1(t)}_{L^2(\bO)} + \norm{\nabla u_2(t)}_{L^2(\bO)})^2 \norm{ u_2(t)}_{L^2(\bO)}^2\\
					&\qquad \quad+\norm{u_1(t)}_{L^p(\bO)}^p + C(p,\abs{\bO})\left(\norm{u_1(t)}_{L^p(\bO)}^{p-1} + \norm{u_2(t)}_{L^p(\bO)}^{p-1}\right)\norm{u_2(t)}_{L^2(\bO)}^2\bigg]\\
					&\quad\quad\times\norm{u_1(t) - u_2(t)}_{L^2(\bO)}^2, \ \text{ for a.e. } t\in[0,T].
				\end{align*}
				By means of Gr\"onwall's inequality 
				\begin{align}
					& \norm{u_1(t)-u_2(t)}_{L^2(\bO)}^2 + \int_0^t\|\nabla(u_1(s) -u_2(s))\|_{L^2(\bO)}^2 ds + \frac{1}{2^{p-1}}\int_0^t\|u_1(s)-  u_2(s)\|_{L^p(\bO)}^p ds \\
					& \leq \norm{u_1(0)-u_2(0)}_{L^2(\bO)}^2\exp\bigg\{\int_0^T2 \bigg[\norm{\nabla u_1(t)}_{L^2(\bO)}^2+ \norm{u_1(t)}_{L^p(\bO)}^p\\
					& \quad +  \frac{1}{2}(\norm{\nabla u_1(t)}_{L^2(\bO)}  + \norm{\nabla u_2(t)}_{L^2(\bO)})^2 \norm{ u_2(t)}_{L^2(\bO)}^2\\
					& \quad + C(p,\abs{\bO})\left(\norm{u_1(t)}_{L^p(\bO)}^{p-1} + \norm{u_2(t)}_{L^p(\bO)}^{p-1}\right)\norm{u_2(t)}_{L^2(\bO)}^2 \bigg]\\
					& \quad \times\norm{u_1(t) - u_2(t)}_{L^2(\bO)}^2\bigg\}, \ \text{ for all } t\in[0,T]. \label{eqn-uniqueness}
				\end{align}
				Utilizing the assumption that $u_1(0) = u_2(0)$ and the fact that $u_1$ and $u_2$ are strong solutions to the problem \eqref{eqn-main-problem} having the regularity given in \eqref{eqn-reg}, we finally obtain $u_1 (t)= u_2(t),$ for a.e. $t\in[0,T]$ in $L^2(\bO)$.
			\end{proof}
			Hence, the proof of Theorem \ref{Thm-main-wellposed} is concluded.

			\begin{remark}
				Note that using \eqref{eqn-grad_M-grad-rel} in \eqref{eqn-energy-eqn}, we deduce
				\begin{align}
					\bE(u(t))+\int_0^t\|\nabla \bE(u(s))\|_{L^2(\bO)}^2ds = \int_0^t\abs{\bS(u(s))}^2ds + \bE(u_0),\ \text{ for } t\in[0,T]. \label{eqn-energy-dissipative-2}
				\end{align}
				Now, using the definition of $\bE$ (see \eqref{Eu}) and the formula \eqref{eqn-Lap-Nn}, we infer that
				\begin{align}
					\|\nabla \bE(u)\|_{L^2(\bO)}^2 & = \norm{-\Delta u + \abs{u}^{p-2}u }_{L^2(\bO)}^2\\
					& = \norm{\Delta u}_{L^2(\bO)}^2 + \norm{u}_{L^{2p-2}(\bO)}^{2p-2} - 2(\Delta u, \abs{u}^{p-2}u)\\
					& = \norm{\Delta u}_{L^2(\bO)}^2 + \norm{u}_{L^{2p-2}(\bO)}^{2p-2}  + 2(p-1)\|\abs{u}^{\frac{p-2}{2}} \nabla u\|_{L^2(\bO)}^2. \label{eqn-L^2-grad-E(u)}
				\end{align}
				Finally, using \eqref{Eu} and \eqref{eqn-S(u)} in \eqref{eqn-energy-dissipative-2}, we assert that
				\begin{align*}
					&\frac{1}{2} \norm{\nabla u(t)}_{L^2(\bO)}^2 + \frac{1}{p} \norm{ u(t)}_{L^p(\bO)}^p \\
					&\quad + \int_0^t \left[\norm{\Delta u(s)}_{L^2(\bO)}^2 + \norm{u(s)}_{L^{2p-2}(\bO)}^{2p-2} + 2(p-1)\big\|\abs{u(s)}^{\frac{p-2}{2}} \nabla u(s)\big\|_{L^2(\bO)}^2\right] ds\\
					& =  \int_0^t(\norm{\nabla u(s)}_{L^2(\bO)}^2 + \norm{u(s)}_{L^p(\bO)}^p)^2 ds + \frac{1}{2} \norm{\nabla u_0}_{L^2(\bO)}^2 + \frac{1}{p} \norm{ u_0}_{L^p(\bO)}^p.
				\end{align*}
				By taking supremum over $t\in [0,T]$, we obtain
				\begin{align*}
					&\frac{1}{2} \sup_{t\in[0,T]} \norm{\nabla u(t)}_{L^2(\bO)}^2 + \frac{1}{p} \sup_{t\in[0,T]} \norm{ u(t)}_{L^p(\bO)}^p \\
					& \quad+ \int_0^T \left[\norm{\Delta u(s)}_{L^2(\bO)}^2 + \norm{u(s)}_{L^{2p-2}(\bO)}^{2p-2} + 2(p-1)\big\|\abs{u(s)}^{\frac{p-2}{2}} \nabla u(s)\big\|_{L^2(\bO)}^2\right] ds\\
					& = \int_0^T(\norm{\nabla u(s)}_{L^2(\bO)}^2 + 2\norm{u(s)}_{L^p(\bO)}^p)^2 ds + \frac{1}{2} \norm{\nabla u_0}_{L^2(\bO)}^2 + \frac{1}{p} \norm{ u_0}_{L^p(\bO)}^p\\
					& \leq C(T, \norm{u_0}_{L^p(\bO) \cap H_0^1(\bO)}).
				\end{align*}
				In particular, we infer that
				\begin{align}
					& \int_0^T  \left[\norm{\Delta u(s)}_{L^2(\bO)}^2 + \norm{u(s)}_{L^{2p-2}(\bO)}^{2p-2} + (p-1)\|\abs{u(s)}^{\frac{p-2}{2}} \nabla u(s)\|_{L^2(\bO)}^2\right] ds\\
					& \leq C(T, \norm{u_0}_{L^p(\bO) \cap H_0^1(\bO)}).\label{eqn-est-D(A)-L^{2p-2}}
				\end{align}
				This shows that $u\in L^2(0,T; D(A)) \cap L^{2p-2}(0,T; L^{2p-2}(\bO))$. Moreover,
				\begin{align*}
					\int_0^T \big\|\abs{u(s)}^{\frac{p-2}{2}} \nabla u(s)\big\|_{L^2(\bO)}^2 ds\leq C(T, \norm{u_0}_{L^p(\bO) \cap H_0^1(\bO)}).
				\end{align*}
				On the other hand, by Sobolev's embedding, we have
				\begin{align*}
					W^{1,2}(\bO) = H^1(\bO) \embed L^p
					(\bO) & \ \text{ for }\ d >2 \ \text{ and }\ 2 \leq p \leq\frac{2d}{d-2},
					\\
					W^{1,2}(\bO) = H^1(\bO) \embed L^p(\bO) & \ \text{ for }\ d =2  \ \text{ and }\ 2 \leq p < \infty.
				\end{align*}
				Therefore, by using above embeddings and Poincar\'e's inequality, we deduce
				\begin{align*}
					\int_0^t\norm{u(s)}_{L^{\frac{pd}{d-2}}(\bO)}^p ds 
					& \leq C(p)\int_0^t \big\|\abs{u(s)}^{\frac{p-2}{2}} \nabla u(s)\big\|_{L^2(\bO)}^2 ds < \infty.
				\end{align*}
				Thus $u\in L^p (0,T; L^{\frac{pd}{d-2}}(\bO))$
				for $d>2,\ \displaystyle p\in \left[2, \frac{2d}{d-2}\right]$.
			\end{remark}

			Next, it is worth noting that some other regularity properties of the solution $u$ follow from the interpolation inequality.
			
			\begin{remark}
				Observe that for $\gamma\in(0,1)$, using interpolation inequality and \eqref{eqn-est-D(A)-L^{2p-2}},  we have
				\begin{align*}
					\int_0^T\|A^{\gamma}u(t)\|_{L^2(\bO)}^{\frac{2}{\gamma}}dt 
					& \leq \int_0^T\|u(t)\|_{L^2(\bO)}^{\frac{2(1-\gamma)}{\gamma}}\|Au(t)\|_{L^2(\bO)}^2dt\\
					&\leq \sup_{t\in[0,T]}\|u(t)\|_{L^2(\bO)}^{\frac{2(1-\gamma)}{\gamma}}
					\int_0^T\|Au(t)\|_{L^2(\bO)}^2dt\\
					&\leq C\big(T, \|u_0\|_{L^p(\bO)\cap H_0^1(\bO)}\big)\sup_{t\in[0,T]}\|u(t)\|_{L^2(\bO)}^{\frac{2(1-\gamma)}{\gamma}} < \infty.
				\end{align*}
				Thus $u \in L^{\frac{2}{\gamma}}(0,T; D(A^\gamma))$, for any dimension $d\geq 1$.
			\end{remark}

			\begin{remark}
				If one considers the following heat equation:
				\begin{align}
					\left\{
					\begin{aligned}
						\frac{\partial u(t)}{\partial t} & = \Delta u(t), \  t>0\\
						u(0) & = u_0, \\
						u(t)|_{\partial\bO} & = 0,
					\end{aligned}
					\right.
				\end{align}
				then the unique solution can be represented as $u(t)=e^{\Delta t}u_0$. From \cite[Proposition 48.4]{PQ+PS-07}, we infer that for any $1 \leq p < q \leq\infty$, $t > 0$ and $f\in L^p(\mathcal{O})$,
				\begin{align}
					\|e^{\Delta t}f\|_{L^q(\bO)}\leq\frac{1}{(4\pi t)^{\frac{d}{2}\left(\frac{1}{p}-\frac{1}{q}\right)}}\|f\|_{L^p(\bO)}.
				\end{align}
				Therefore, for $u_0\in L^p(\bO)$, we estimate
				\begin{align}\label{eqn-heat_est}
					\|e^{\Delta t}u_0\|_{L^q(0,T;L^r(\bO))}^q
					& = \int_0^T\|e^{\Delta t}u_0\|_{L^r(\bO)}^qdt\leq\int_0^T\bigg(\frac{1}{(4\pi t)^{\frac{d}{2}\left(\frac{1}{p}-\frac{1}{q}\right)}}\|u_0\|_{L^p(\bO)}\bigg)^qdt\\ 
					& = \|u_0\|_{L^p(\bO)}^q(4\pi T)^{-\frac{dq}{2}\left(\frac{1}{p}-\frac{1}{r}\right)+1},
				\end{align}
				provided $\frac{dq}{2}\left(\frac{1}{p}-\frac{1}{r}\right)<1$. For $2\leq p<\frac{2d}{d-2}$, taking $q=r=2p-2$ in \eqref{eqn-heat_est}, we deduce
				\begin{align}
					\|e^{\Delta t}u_0\|_{L^{2p-2}(0,T;L^{2p-2}(\bO))}^{2p-2}\leq \|u_0\|_{L^p(\bO)}^{2p-2}(4\pi T)^{-\frac{d(p-2)}{2p}+1}.
				\end{align}
				Therefore, for large $p$, i.e., $p\geq\frac{2d}{d-2}$, the nonlinear term $|u|^{p-2}u$ helps us to obtain the extra regularity $u\in L^{2p-2}(0,T;L^{2p-2}(\bO))$.
			\end{remark}

			\begin{remark}
				Observe from the estimate \eqref{eqn-estimate-Lp+H01} that the $L^p(\bO)\cap H_0^1(\bO)-$norm remains bounded uniformly with respect to time $T$. Consequently, we may express the solution space for $u$ as
				\begin{align*}
					C ([0,\infty); L^p(\bO) \cap H_0^1(\bO)\cap\bM)\cap L_{\loc}^2(0,\infty; D(A))\cap L_{\loc}^{2p-2}(0,\infty; L^{2p-2}(\bO)).
				\end{align*}
				Hence this allows us to analyze the asymptotic behaviour of the strong solution.
			\end{remark}
			
			\section{Asymptotic analysis}\label{Sec-Asy-anal-Antonelli}
			This section focuses on investigating the existence of a  ground state solution for the stationary equation \eqref{eqn-stationary}, as well as examining the long-time behavior of solutions to the time-dependent problem \eqref{eqn-main-problem-copy}. The analysis follows the framework developed by Antonelli et al. \cite{PA+PC+BS-24}.
			  We first introduce the concept of a ground state, followed by a proof that the corresponding energy functional $\bE$ is coercive and weakly lower semicontinuous. Next, we establish that the ground state is a local minimizer of the variational problem \eqref{eqn-min-energy-problem}. Subsequently, we prove the uniqueness of the positive stationary solution, based on the framework developed in \cite{TO-92}. In the final step, we establish that the unique strong positive solution of the time-dependent problem \eqref{eqn-main-problem-copy} converges, in the strong topology, to the unique positive ground state.
			
			In what follows, we choose and fix $2\leq p<\infty$ and suppose $\bO$ is any bounded smooth domain in $\R^d$ for the entire section (until unless specified), and examine the asymptotic dynamics of the solution to the following problem in $(0,T)\times\bO$:
			\begin{align}\label{eqn-main-problem-copy}
				\left\{
				\begin{aligned}
					\frac{\partial u(t)}{\partial t} & = \Delta u(t) -|u(t)|^{p-2}u(t) + \big( \norm{\nabla u(t)}_{L^2(\bO)}^2 + \norm{u(t)}_{L^p(\bO)}^p \big) u(t),\\
					u(0) & = u_0, \\
					u(t)|_{\partial\bO} & = 0,
				\end{aligned}
				\right.
			\end{align}
			where $u:[0,\infty) \times\bO \to \R$ with $u(t)=u(t,x)$.
			
			\begin{remark}
				Note that if we consider 
				$$p < \left\{
				\begin{aligned}	
					& \infty, \ d\leq 2,\\
					& \frac{2d}{d-2},\ d \geq 3, 
				\end{aligned}\right.$$
				then we have the compact embedding $H_0^1(\bO) \embed L^p(\bO)$. In this case,  our results, Theorem \ref{Thm-main-wellposed} and Theorem \ref{Thm-main-assymp}, coincide with \cite[Theorem 1.2 and Theorem 1.7]{PA+PC+BS-24}, respectively. By considering initial data in the space $L^p(\bO) \cap H_0^1(\bO) \cap \bM$, we extend and refine the results of Antonelli et al. Specifically, we generalize \cite[Theorem 1.2]{PA+PC+BS-24} to hold for all $2 \leq p < \infty$ and in any spatial dimension $d \geq 1$. Furthermore, when $u_0 > 0$ and $\mathcal{O}$ is a smooth bounded domain, our results also cover \cite[Theorem 1.7]{PA+PC+BS-24} under the same broader conditions on $p$ and $d$.
			\end{remark}

			\subsection{Ground state solutions}
			In this subsection, we begin by defining the notion of ground states and proceed to establish the existence of a ground state solution to problem \eqref{eqn-main-problem-copy}, utilizing the weak lower semicontinuity and coercivity of the energy functional $\bE$ defined in \eqref{def-new-energy}.

			Next, we show that the minimizer of the energy functional defined as
			\begin{equation}\label{def-new-energy}
				L^p(\bO) \cap H_0^1(\bO) \ni u \mapsto \bE(u) := \frac{1}{2} \int_\bO \abs{\nabla u(x)}^2 dx + \frac{1}{p} \int_\bO \abs{ u(x)}^p dx \in \R,
			\end{equation}
			under the  constraint on the total mass:
			\begin{align}
				\inf \big\{\bE(u) : u \in L^p(\bO) \cap H_0^1(\bO), \norm{u}_{L^2(\bO)} = 1\big\}, \label{eqn-min-energy-problem}
			\end{align}
			is a weak solution to the stationary problem corresponding to \eqref{eqn-main-problem-copy}, i.e.,
			\begin{align}
				\Delta u - \abs{u}^{p-2}u +  \norm{\nabla u}_{L^2(\bO)}^2 u + \norm{u}_{L^p(\bO)}^pu
				=0, \ \text{ in }\ L^{\frac{p}{p-1}}(\bO)+H^{-1}(\bO), \label{eqn-stationary}
			\end{align}
			which is usually known as the \emph{ground state solution}.
			\begin{definition}\label{Def-ground-state}
				We say that $u$ is a \emph{ground state}, if $u$ is a minimizer of the energy functional \eqref{Eu} and solves the elliptic equation \eqref{eqn-stationary}. 
			\end{definition}
			
			
			\begin{lemma}\label{Lem-E-wls-coer}
					The functional $\bE$, defined in \eqref{def-new-energy}, is weakly lower semicontinuous, i.e., if $\{u_n\}_{n\in\N}$ is a sequence in $L^p(\bO) \cap H_0^1(\bO),$ that converges weakly to $u$, then
					\begin{align}
						\bE(u) \leq \liminf_{n\to\infty} \bE(u_n),
					\end{align}
					and coercive, i.e.,
					\begin{align}
						\frac{\bE(u)}{\|u\|_{L^p(\bO) \cap H_0^1(\bO)}} \to \infty\ \ \text{as}\ \ \|u\|_{L^p(\bO) \cap H_0^1(\bO)} \to \infty.
					\end{align}
			\end{lemma}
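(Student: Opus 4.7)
My plan is to treat the two assertions separately, as they rest on rather different (but both elementary) principles; the key observation in both cases is that $\bE$ decomposes naturally into a quadratic $H_0^1$-piece and a $p$-homogeneous $L^p$-piece, and the intersection norm is just the maximum of the $H_0^1$- and $L^p$-norms.

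For the weak lower semicontinuity, I would proceed as follows. Suppose $u_n \rightharpoonup u$ in $L^p(\bO)\cap H_0^1(\bO)$; by definition of the intersection (and its dual sum) topology, this is equivalent to $u_n\rightharpoonup u$ both in $H_0^1(\bO)$ and in $L^p(\bO)$. Since the gradient map $\nabla\colon H_0^1(\bO)\to L^2(\bO)^d$ is a bounded linear operator, weak convergence in $H_0^1(\bO)$ implies $\nabla u_n\rightharpoonup \nabla u$ in $L^2(\bO)^d$. Because the norm of any Banach space is weakly lower semicontinuous (as the supremum of the continuous linear functionals in its unit dual ball), one obtains
\begin{align*}
\|\nabla u\|_{L^2(\bO)}^2 \leq \liminf_{n\to\infty}\|\nabla u_n\|_{L^2(\bO)}^2, \qquad \|u\|_{L^p(\bO)}^p \leq \liminf_{n\to\infty}\|u_n\|_{L^p(\bO)}^p.
\end{align*}
Adding these two inequalities with the correct prefactors $1/2$ and $1/p$, and using the elementary fact that $\liminf$ is superadditive, yields $\bE(u)\leq \liminf_{n\to\infty}\bE(u_n)$, as required.

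For coercivity, I would exploit the equivalent characterisation $\|u\|_{L^p(\bO)\cap H_0^1(\bO)}=\max\{\|u\|_{L^p(\bO)},\|u\|_{H_0^1(\bO)}\}$ recalled in Section \ref{Sec-Preliminaries}. Split into two exhaustive cases. If $\|u\|_{H_0^1(\bO)}\geq \|u\|_{L^p(\bO)}$, then $\|u\|_{L^p(\bO)\cap H_0^1(\bO)}=\|u\|_{H_0^1(\bO)}$ and
\begin{align*}
\frac{\bE(u)}{\|u\|_{L^p(\bO)\cap H_0^1(\bO)}}\geq \frac{\tfrac12\|u\|_{H_0^1(\bO)}^2}{\|u\|_{H_0^1(\bO)}}=\tfrac12\|u\|_{H_0^1(\bO)};
\end{align*}
in the complementary case $\|u\|_{L^p(\bO)}\geq \|u\|_{H_0^1(\bO)}$, the analogous estimate gives $\bE(u)/\|u\|_{L^p(\bO)\cap H_0^1(\bO)}\geq \tfrac{1}{p}\|u\|_{L^p(\bO)}^{p-1}$ (using $p\geq 2$). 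In both cases the right-hand side tends to $+\infty$ as $\|u\|_{L^p(\bO)\cap H_0^1(\bO)}\to\infty$, which is precisely coercivity.

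Neither step presents a serious obstacle: the lower semicontinuity is really just the well-known weak lower semicontinuity of the norm on Banach spaces applied to the two component spaces, and the coercivity reduces to the trivial observation that the intersection norm is controlled from above by whichever component norm is larger, against which the corresponding quadratic or $p$-th power term in $\bE$ dominates linearly. The only small subtlety to flag is the correct identification of the weak topology on the intersection $L^p(\bO)\cap H_0^1(\bO)$ with the product of the weak topologies of $L^p(\bO)$ and $H_0^1(\bO)$ restricted to the diagonal, which is standard and was essentially already recorded in the functional setting.
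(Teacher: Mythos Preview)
Your proof is correct. The weak lower semicontinuity argument is essentially identical to the paper's: both of you observe that weak convergence in the intersection implies weak convergence in each component, invoke weak lower semicontinuity of the norm in each, and then combine via superadditivity of $\liminf$.

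For coercivity the approaches differ. The paper first uses the elementary scalar inequality $x^2\leq 1+x^p$ (valid for $x\geq 0$, $p\geq 2$) to write $\bE(u)\geq \tfrac{1}{p}\big(\|u\|_{H_0^1(\bO)}^2+\|u\|_{L^p(\bO)}^2-1\big)$, and then divides by the equivalent norm $\big(\|u\|_{H_0^1(\bO)}^2+\|u\|_{L^p(\bO)}^2\big)^{1/2}$. Your case split according to which component realises the maximum norm is more direct and avoids the auxiliary inequality altogether: in each case you simply drop one term of $\bE$ and compare a power against the first power of the same quantity. Your argument is slightly cleaner and makes the role of $p\geq 2$ transparent (it is only needed so that $\|u\|_{L^p(\bO)}^{p-1}\to\infty$), while the paper's version has the minor advantage of producing a single uniform lower bound rather than a dichotomy.
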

			
			\begin{proof}
				First note from the definition of $\bE$ that, for any $u \in L^p(\bO) \cap H_0^1(\bO),$ 
				\begin{align*}
					\bE(u) = \frac{1}{2}  \norm{u}_{H_0^1(\bO)}^2 + \frac{1}{p} \norm{ u}_{L^p(\bO)}^p.
				\end{align*}
				Let $\{u_n\}_{n\in\N}$ be a sequence in $L^p(\bO) \cap H_0^1(\bO),$ that converges weakly to $u$ so that 
				\begin{align*}
					u_n \rightharpoonup u \text{ in }\ L^p(\bO) \ \text{ and }\ u_n \rightharpoonup u\ \text{ in }\ H_0^1(\bO). 
				\end{align*}
				Since every norm is weakly lower semicontinuous \cite[see Ch.\;7 Sec.\;3]{DM+DZ-98}, it implies
				\begin{align}\label{eqn-WLC-1}
					\frac{1}{2^{1/2}}\norm{u}_{H_0^1(\bO)} \leq \frac{1}{2^{1/2}}\liminf_{n\to\infty} \norm{u_n}_{H_0^1(\bO)} \ \text{ and }\  \frac{1}{p^{1/p}}\norm{u}_{L^p(\bO)} \leq \frac{1}{p^{1/p}}\liminf_{n\to\infty} \norm{u_n}_{L^p(\bO)}.
				\end{align}
				Squaring the first term of the inequality \eqref{eqn-WLC-1} and utilizing the standard properties of limit infimum yields
				\begin{align}\label{eqn-WLC-2}
					\frac{1}{2}\norm{u}_{H_0^1(\bO)}^2 & \leq \frac{1}{2}\left(\liminf_{n\to\infty} \norm{u_n}_{H_0^1(\bO)}\right)^2 \leq \frac{1}{2}\liminf_{n\to\infty} \norm{u_n}_{H_0^1(\bO)}^2.
				\end{align}
				Similarly, taking $p^{\mathrm{th}}$ power on both sides of the second term in the inequality \eqref{eqn-WLC-1}, we deduce
				\begin{align}\label{eqn-WLC-3}
					\frac{1}{p}\norm{u}_{L^p(\bO)}^p \leq \frac{1}{p}\left(\liminf_{n\to\infty} \norm{u_n}_{L^p(\bO)}\right)^p  \leq \frac{1}{p} \liminf_{n\to\infty} \norm{u_n}_{L^p(\bO)}^p.
				\end{align}
				Now, adding the inequalities \eqref{eqn-WLC-2} and \eqref{eqn-WLC-3}, we get 
				\begin{align}
					\bE(u) = \frac{1}{2}\norm{u}_{H_0^1(\bO)}^2 + \frac{1}{p}\norm{u}_{L^p(\bO)}^p \leq 
					\frac{1}{2}\liminf_{n\to\infty} \norm{u_n}_{H_0^1(\bO)}^2 + \frac{1}{p} \liminf_{n\to\infty} \norm{u_n}_{L^p(\bO)}^p.
				\end{align}
				Once again utilizing the standard properties of limit infimum, we conclude 
				\begin{align*}
					\bE(u) \leq  \liminf_{n\to\infty} \left(\frac{1}{2}\norm{u_n}_{H_0^1(\bO)}^2 + \frac{1}{p} \norm{u_n}_{L^p(\bO)}^p \right) =  \liminf_{n\to\infty} \bE(u_n).
				\end{align*}
				Hence $\bE$ is weakly lower semicontinuous.
				
				For the coercivity of $\bE$, we consider 
				\begin{align*}
					\bE(u) = \frac{1}{p} \left[\frac{p}{2}\norm{u}_{H_0^1(\bO)}^2 + \norm{ u}_{L^p(\bO)}^p \right] \geq \frac{1}{p}\big[\norm{u}_{H_0^1(\bO)}^2 + \norm{ u}_{L^p(\bO)}^2-1\big],
				\end{align*} 
				where we have used the fact that $x^2\leq 1+x^p$, for all $x\geq 0$ and $2\leq p <\infty$. The inequality obtained above leads directly yields
				\begin{align*}
					\frac{\bE(u)}{\sqrt{\norm{u}_{H_0^1(\bO)}^2 + \norm{ u}_{L^p(\bO)}^2}}\geq \frac{\frac{1}{p}\big[\norm{u}_{H_0^1(\bO)}^2 + \norm{ u}_{L^p(\bO)}^2-1\big]}{\sqrt{\norm{u}_{H_0^1(\bO)}^2 + \norm{ u}_{L^p(\bO)}^2}} \to \infty\ \ \text{as}\ \ \|u\|_{L^p(\bO) \cap H_0^1(\bO)} \to \infty.
				\end{align*}
				Hence $\bE$ is coercive and it concludes the proof.
			\end{proof}

			\begin{lemma}\label{lem-weakly-closed}
				The set $$\bA := \big\{u \in L^p(\bO) \cap H_0^1(\bO): \norm{u}_{L^2(\bO)} = 1\big\}$$ is a weakly sequentially closed subset of $L^p(\bO) \cap H_0^1(\bO)$.
			\end{lemma}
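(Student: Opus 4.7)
The plan is to prove Lemma \ref{lem-weakly-closed} by exploiting the compactness of the Rellich–Kondrachov embedding $H_0^1(\bO) \embed L^2(\bO)$, which holds since $\bO$ is a bounded smooth domain in $\R^d$. Let $\{u_n\}_{n\in\N} \subset \bA$ be a sequence converging weakly to some $u$ in $L^p(\bO) \cap H_0^1(\bO)$. The first step is to observe that weak convergence in the intersection space $L^p(\bO) \cap H_0^1(\bO)$ is equivalent to weak convergence in each of $L^p(\bO)$ and $H_0^1(\bO)$ separately (this can be verified by testing against functionals in the dual, using the identification $(L^p(\bO) \cap H_0^1(\bO))^\prime \cong L^{p^\prime}(\bO) + H^{-1}(\bO)$ recalled in the preliminaries). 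In particular, $u \in L^p(\bO) \cap H_0^1(\bO)$ automatically.

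Next, since $u_n \rightharpoonup u$ in $H_0^1(\bO)$ and the embedding $H_0^1(\bO) \embed L^2(\bO)$ is compact by the Rellich–Kondrachov theorem, I extract from $\{u_n\}$ the strong convergence
\begin{align*}
	u_n \to u \ \text{ in } \ L^2(\bO).
\end{align*}
By continuity of the $L^2$-norm, this forces $\|u_n\|_{L^2(\bO)} \to \|u\|_{L^2(\bO)}$. Since $u_n \in \bA$, we have $\|u_n\|_{L^2(\bO)} = 1$ for every $n \in \N$, so $\|u\|_{L^2(\bO)} = 1$, establishing $u \in \bA$.

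There is no serious obstacle here; the argument is essentially a one-line consequence of the compact embedding. The only point requiring care is the identification of weak convergence in the intersection space, which is standard and follows from the duality structure $(L^p(\bO) \cap H_0^1(\bO))^\prime \cong L^{p^\prime}(\bO) + H^{-1}(\bO)$. I would keep the proof short and self-contained, emphasizing that weak convergence in $H_0^1(\bO)$ alone already suffices to upgrade to strong $L^2(\bO)$ convergence and hence preserve the unit-mass constraint in the weak limit.
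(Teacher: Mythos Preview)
Your proposal is correct and follows essentially the same approach as the paper: both use the compact embedding $H_0^1(\bO)\embed L^2(\bO)$ to upgrade weak convergence in $L^p(\bO)\cap H_0^1(\bO)$ to strong convergence in $L^2(\bO)$, from which the constraint $\|u\|_{L^2(\bO)}=1$ passes to the limit. Your version is slightly more explicit about why weak convergence in the intersection space yields weak convergence in $H_0^1(\bO)$, and you avoid the (unnecessary) subsequence extraction present in the paper's proof.
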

			
			\begin{proof}
				First, observe that $(L^p(\bO) \cap H_0^1(\bO), \norm{\cdot}_{L^p(\bO) \cap H_0^1(\bO)})$ is a reflexive Banach space \cite[Theorem 26.10]{JCR-20}.
				Consider a sequence $\{u_n\}_{n\in\N}$ in $\bA$ satisfying $u_n \rightharpoonup u$ in $L^p(\bO) \cap H_0^1(\bO)$. Since the embedding $H_0^1(\bO) \embed L^2(\bO)$ is compact, we may extract a (not relabeled) subsequence such that $u_n \to u$ in $L^2(\bO)$.
				Since $\|u_n\|_{L^2(\bO)}=1$, the strong convergence in  $L^2(\bO)$ implies that $\|u\|_{L^2(\bO)}=1$ and $u\in\bA$.
			\end{proof}
			
			In the following result (motivated from \cite[Theorem 2.1]{MS-08}), we show that
			$\bU $ is a minimizer of the energy functional $\bE$ defined in \eqref{def-new-energy} and it satisfies the stationary problem \eqref{eqn-stationary}. Therefore, $\bU $ is a ground state solution.
			
				\begin{theorem}\label{Thm-ground-soln}
					Suppose $\bE$ be a energy functional defined in \eqref{def-new-energy}. Then there exists a ground state $\bU$, i.e., $\bU$ is a solution to the minimization problem
					\begin{align}\label{eqn-mini}
						\min_{\bU\in L^p(\bO) \cap H_0^1(\bO)} \big\{\bE(\bU) : \norm{\bU}_{L^2(\bO)} = 1\big\},
					\end{align}
					which also solves the corresponding stationary equation \eqref{eqn-stationary}.
			\end{theorem}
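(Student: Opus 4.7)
The plan is to apply the direct method in the calculus of variations to produce a minimizer, and then to derive the Euler--Lagrange equation \eqref{eqn-stationary} via a constrained-variation argument that yields a Lagrange multiplier. First, take a minimizing sequence $\{u_n\}_{n\in\N}\subset \bA$ with $\bE(u_n)\to I:=\inf_{v\in\bA}\bE(v)$. By the coercivity of $\bE$ on $L^p(\bO)\cap H_0^1(\bO)$ established in Lemma \ref{Lem-E-wls-coer}, the sequence is bounded in this reflexive space, so, up to a subsequence, $u_n \rightharpoonup \bU$ in $L^p(\bO)\cap H_0^1(\bO)$. Since $\bA$ is weakly sequentially closed by Lemma \ref{lem-weakly-closed}, we have $\bU\in\bA$, and the weak lower semicontinuity from Lemma \ref{Lem-E-wls-coer} yields $\bE(\bU)\le\liminf_{n\to\infty}\bE(u_n)=I$, so $\bU$ realizes the minimum in \eqref{eqn-mini}.

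Next, to show that $\bU$ also solves \eqref{eqn-stationary}, fix an arbitrary test function $\phi\in L^p(\bO)\cap H_0^1(\bO)$ with $(\bU,\phi)=0$, and consider the admissible curve
\begin{equation*}
\gamma(\varepsilon):=\frac{\bU+\varepsilon\phi}{\norm{\bU+\varepsilon\phi}_{L^2(\bO)}},\qquad \varepsilon\in(-\varepsilon_0,\varepsilon_0),
\end{equation*}
which lies in $\bA$ for all sufficiently small $|\varepsilon|$ since $\norm{\bU+\varepsilon\phi}_{L^2(\bO)}^2=1+\varepsilon^2\norm{\phi}_{L^2(\bO)}^2$. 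A direct computation then gives $\gamma(0)=\bU$ and $\gamma'(0)=\phi$ in $L^p(\bO)\cap H_0^1(\bO)$. Exploiting the Fr\'echet differentiability of $u\mapsto \tfrac{1}{2}\norm{\nabla u}_{L^2(\bO)}^2$ on $H_0^1(\bO)$ and of $u\mapsto\tfrac{1}{p}\norm{u}_{L^p(\bO)}^p$ on $L^p(\bO)$, with derivatives $-\Delta u\in H^{-1}(\bO)$ and $\abs{u}^{p-2}u\in L^{p'}(\bO)$ respectively, the minimality of $\bU$ forces
\begin{equation*}
0=\left.\frac{d}{d\varepsilon}\bE(\gamma(\varepsilon))\right|_{\varepsilon=0}=\langle -\Delta\bU+\abs{\bU}^{p-2}\bU,\phi\rangle,
\end{equation*}
where the duality pairing is between $L^{p'}(\bO)+H^{-1}(\bO)$ and $L^p(\bO)\cap H_0^1(\bO)$.

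Finally, any $\psi\in L^p(\bO)\cap H_0^1(\bO)$ decomposes as $\psi=(\bU,\psi)\bU+(\psi-(\bU,\psi)\bU)$, with the second summand $L^2$-orthogonal to $\bU$ and still in $L^p(\bO)\cap H_0^1(\bO)$. Applying the vanishing identity to that summand identifies
\begin{equation*}
-\Delta\bU+\abs{\bU}^{p-2}\bU=\lambda\,\bU\qquad\text{in } L^{p'}(\bO)+H^{-1}(\bO),
\end{equation*}
with the Lagrange multiplier $\lambda:=\langle -\Delta\bU+\abs{\bU}^{p-2}\bU,\bU\rangle=\norm{\nabla\bU}_{L^2(\bO)}^2+\norm{\bU}_{L^p(\bO)}^p$, which upon rearrangement is exactly \eqref{eqn-stationary}. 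The main technical subtlety I anticipate is the rigorous verification of the differentiability of $\bE$ along $\gamma$ within the intersection/sum-space framework, especially for the $L^p$-term at small exponents, so that the duality pairing above is unambiguously defined; once that is settled, the Lagrange-multiplier identification of $\lambda$ by testing against $\bU$ is automatic.
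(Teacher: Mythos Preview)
Your proposal is correct and follows essentially the same approach as the paper: the direct method (coercivity, reflexivity, weak sequential closedness of $\bA$, and weak lower semicontinuity from Lemmas \ref{Lem-E-wls-coer} and \ref{lem-weakly-closed}) gives the minimizer, and a first-variation argument identifies the Lagrange multiplier $\lambda=\norm{\nabla\bU}_{L^2(\bO)}^2+\norm{\bU}_{L^p(\bO)}^p$. The only cosmetic difference is that the paper invokes the abstract Lagrange-multiplier rule via the constraint functional $G(u)=\tfrac12(\norm{u}_{L^2(\bO)}^2-1)$ and the implicit function theorem, whereas you carry out the same computation by hand through the normalized curve $\gamma(\varepsilon)$ and the orthogonal decomposition of test functions; both routes are standard and equivalent.
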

			
			\begin{proof}
				We want to find a minimizer of the functional $\bE(\cdot)$ defined in \eqref{def-new-energy} under the constraint that $\|u\|_{L^2(\bO)} = 1$.
				This is a constrained optimization problem, and we use the method of \textit{Lagrange multipliers} to solve it.
				
				Since $L^p(\bO) \cap H_0^1(\bO)$ is a reflexive Banach space with norm $\|\cdot\|_{L^p(\bO) \cap H_0^1(\bO)}$ and from Lemma \ref{lem-weakly-closed}, $\bA$ is a weakly closed subset in $L^p(\bO) \cap H_0^1(\bO)$. Furthermore, by using  Lemma \ref{Lem-E-wls-coer}, we infer that $\bE$ is coercive and weakly lower semicontinuous. Therefore, by \cite[Theorem 1.2]{MS-08}, $\bE$ attains its infimum at a point $\wtilde{u}\in\bA$.
				
				To derive the variational problem for $\bE$, let us denote the first order Fréchet-derivatives at $v\in L^p(\bO) \cap H_0^1(\bO)$ as
				\begin{align*}
					\d_v\bE &\in \mathcal{L}(L^p(\bO) \cap H_0^1(\bO) ;\mathbb{R}).
				\end{align*}
				Note that $\bE$ is Fréchet-differentiable in $L^p(\bO) \cap H_0^1(\bO)$ with
				\begin{align*}
					\d_v\bE(u) = \int_{\bO}  \left[\nabla u(x)\cdot\nabla v(x) + \abs{u(x)}^{p-2}u(x) v(x)\right]dx.
				\end{align*}
				
				Moreover, to incorporate the constraint \( \| u \|_{L^2(\bO)} = 1 \), let us introduce a functional $G: L^p(\bO) \cap H_0^1(\bO) \to \R$ by
				\begin{align*}
					G(u) := \frac{1}{2} \left( \int_\bO |u(x)|^2 \, dx - 1 \right),
				\end{align*}
				which is also Fréchet-differentiable with
				\begin{align*}
					\d_vG(u) = \int_{\bO} u(x)v(x)dx.
				\end{align*}
				In particular, at any point $u\in \bA$
				\begin{align*}
					\d_uG(u) = \int_{\bO}u^2(x) dx = \norm{u}_{L^2(\bO)}^2=1 \ne 0,
				\end{align*}
				and by the implicit function theorem, the set $\bA = G^{-1}(0)$.
				
				The functional $G$ is in fact the Lagrange multiplier associated with the sphere constraint in \eqref{eqn-Sphere-bM}.
				Thus, there exists a parameter $\lambda\in\R$ and a functional $\Ln : L^p(\bO) \cap H_0^1(\bO)\times\R \to \R$ such that
				\begin{align}\label{eqn-lagrange}
					\d_v \Ln(u,\lambda) = \d_v\bE(u) - \lambda \d_vG(u) & =\langle- \Delta u + \abs{u}^{p-2}u - \lambda u, v\rangle,
				\end{align}
				for any $v\in L^p(\bO) \cap H_0^1(\bO)$.
				Then, the first variation of $\Ln$, i.e., $\d_{v}\Ln(u, \lambda) =0$ gives the following \textit{Euler-Lagrange} system:
				\begin{align*}
					\int_{\bO}\left[\nabla u(x) \nabla v(x) + |u(x)|^{p-2} u(x) v(x) - \lambda u(x) v(x)\right] dx= 0.
				\end{align*}
				Inserting $v=u$ in the equation above, we infer
				\begin{align*}
					\lambda = \|\nabla u\|_{L^2(\bO)}^2+\|u\|_{L^p(\bO)}^p.
				\end{align*}
				Therefore, a local minimizer $u=\bU $ of the problem \eqref{eqn-min-energy-problem} satisfies the stationary equation \eqref{eqn-stationary}.
			\end{proof}
			
			\begin{remark}
				Since $\bE(u) = \bE(|u|)$, we may assume that the ground state solution obtained above is non-negative, i.e., $u \geq 0$.
			\end{remark}

			\subsection{Uniqueness of positive ground state solution}\label{Sec-uni-ground}
			This subsection is motivated by the work of Ouyang \cite{TO-92}. The paper  \cite{TO-92} considered the problem of uniqueness of solutions to the system \eqref{eqn-stationary-trans} without constraint in compact Riemannian  manifolds and bounded domains (\cite[Theorems 1 and 2]{TO-92}). The author in   \cite{TO-92} provided a detailed proof in the case of compact Riemannian  manifolds. Using the methodology adopted in  \cite{TO-92}, we discuss the case of bounded domains in detail.  Our aim is to show that a positive classical solution $u$ to the problem \eqref{eqn-stationary-trans} is unique. First, we discuss the strong maximum principle, the concept of sub- and super-solutions of the problem \eqref{eqn-stationary-trans} which corresponds to the problem \eqref{eqn-stationary}. Then, by using a well-known result on the sub-super-solution method \cite[Lemma 2.6]{JLK+FWW-75}, we conclude this section by proving that the positive solution to the problem \eqref{eqn-stationary-trans} on any bounded smooth domain is, in fact, unique.
			
			Let us now look at a positive classical solution $u$ of the problem \eqref{eqn-stationary}. We infer from \eqref{eqn-lagrange} that $u$ satisfies the following problem:
			\begin{align}\label{eqn-stationary-trans}
				\left\{
				\begin{aligned}
					\Delta u - u^{p-1} + \lambda u = 0, & \ \text{ in }\ \bO,\\
					u>0, & \ \text{ in }\ \bO,\\
					u = 0, &\ \text{ on }\ \partial\bO,\\
					\int_{\bO}|u(x)|^2dx=1,&
				\end{aligned}
				\right.
			\end{align}
			where 
			$\lambda > 0$.

			First, we provide some important definitions and results, followed by a strong maximum principle, taken from \cite[Section 2]{TO-92} (in smooth bounded domain case):
			
			\begin{theorem}[Strong maximum principle]\label{Thm-Str-max-prin}
				Let $u\in C^2(\bO)\cap C(\overline{\bO})$ be a classical solution to the problem
				\begin{equation}
					\left\{
					\begin{aligned}
						\Delta u+h u\leq 0, & \ \text{ in }\ \bO,\\
						u\geq 0, & \ \text{ in }\ \bO,\\
						u = 0, &\ \text{ on }\ \partial\bO,
					\end{aligned}
					\right.
				\end{equation}
				where $h$ is a bounded function. Then $u>0$ on $\bO$.
			\end{theorem}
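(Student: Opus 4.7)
The plan is to argue by contradiction, reducing the stated inequality to the canonical form of the strong maximum principle for a Schr\"odinger-type operator with a nonnegative bounded zeroth-order coefficient. First, I would observe that the conclusion $u>0$ in $\bO$ must be read under the implicit assumption $u\not\equiv 0$; otherwise the hypotheses are compatible with $u\equiv 0$ and the conclusion fails. In the intended application to \eqref{eqn-stationary-trans} this nontriviality is automatic because of the normalization $\|u\|_{L^2(\bO)}=1$, so I would record this at the outset and then proceed under $u\not\equiv 0$.

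The key algebraic step is to decompose the (bounded) coefficient as $h=h^+-h^-$ with $h^\pm\ge 0$. Using $u\ge 0$ and $h^+\ge 0$, the hypothesis $\Delta u + h u\le 0$ rewrites as
\begin{equation*}
\Delta u - h^- u \;\le\; -h^+ u \;\le\; 0 \quad\text{in }\bO.
\end{equation*}
Setting $L:=\Delta - h^-$, this places $u$ in the standard framework $Lu\le 0$ with a bounded nonnegative potential $h^-\ge 0$, which is precisely the class for which the interior strong maximum principle is available (unlike the case of general bounded sign-changing coefficient, where one cannot directly conclude $u>0$ without the nonnegativity of $u$ itself, which we do have here).

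Next, I would suppose, for contradiction, that there exists $x_0\in\bO$ with $u(x_0)=0$. Since $u\ge 0$ in $\bO$, such an $x_0$ is an interior minimum point of $u$ at which the minimum value is $0$. Invoking the classical strong maximum principle for the operator $L=\Delta-h^-$ with bounded nonnegative $h^-$ (as in Gilbarg–Trudinger, Theorem~3.5, whose proof rests on Hopf's boundary point lemma applied to an interior sphere tangent to the nodal set of $u$), one concludes that $u$ is constant on the connected component of $\bO$ containing $x_0$. Since $\bO$ is a domain and hence connected, and since that constant must equal the minimum value $0$, this forces $u\equiv 0$ on $\bO$, contradicting $u\not\equiv 0$.

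The only mild technical point is the sign decomposition of $h$, which is what allows us to bring the problem into the canonical form for which Hopf's lemma applies; after that, the argument is the textbook strong maximum principle, so I do not anticipate any serious obstacle. If one wished to avoid citing an external reference, the Hopf-lemma step could be made self-contained by constructing the usual barrier $v(x)=e^{-\alpha|x-z|^2}-e^{-\alpha R^2}$ on an annulus and choosing $\alpha$ large depending on $\|h^-\|_{L^\infty(\bO)}$ so that $L v\ge 0$, then comparing $u$ with a small multiple of $v$ near the hypothetical zero touching point.
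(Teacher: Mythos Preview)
Your proposal is correct and follows essentially the same route as the paper: both argue by contradiction, assuming an interior zero of $u$ and invoking Hopf's boundary point lemma on an interior ball tangent to the nodal set to obtain $\nabla u\neq 0$ at an interior minimum. Your explicit decomposition $h=h^+-h^-$ (which reduces the operator to one with nonpositive zeroth-order coefficient so that the standard Hopf/strong maximum principle applies verbatim) and your remark that $u\not\equiv 0$ must be assumed are useful clarifications that the paper's terse proof leaves implicit.
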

			
			\begin{proof}
				Let $M_+:=\{x\in\bO:u(x)>0\}\subset\mathcal{O}$ and $M_0:=\{x\in\bO:u(x)=0\}$. Let us choose $x_0\in M_+$ such that
				\begin{align*}
					\mathrm{dist}(x_0,M_0)<	\mathrm{dist}(x_0,\partial\bO),
				\end{align*}
				and consider the largest ball $B\subset M_+$ centered at $x_0$. Then, there exists a point $y\in\partial B\cap M_0$ such that 
				$$u(y)=0\ \text{ and }\ u>0\ \text{ in }\ B.$$ 
				Thus, by the Hopf Lemma (\cite[Theorem 3.5]{DG+NST-01} or \cite{EH-27}), we immediately have $\nabla u(y)\neq 0$, which contradicts the fact that $y$ is an interior minimum of $\bO$.
			\end{proof}

			\begin{definition}[{\cite[Definition 1]{TO-92}}]\label{def-super/sub-sol}
				Let $f:\mathbb{R}\to\mathbb{R}$ be a $C^1$ function. A function $v\in C^2(\bO)\cap C(\overline{\bO})$ is said to be a super-solution (sub-solution) of the problem
				\begin{align}\label{eqn-gen-elliptic}
					-\Delta u = f(u) \ \text{ in }\ \bO,\ u=0\ \text{ on }\ \partial\bO,
				\end{align}
				if $v$ satisfies the inequality
				\begin{align*}
					-\Delta v \geq (\leq)\; f(v) \ \text{ in }\ \bO, \ v=0\ \text{ on }\ \partial\bO.
				\end{align*}
				Moreover, $v$ is called a super-solution and sub-solution in the weak sense, if $v\in H_0^1(\bO)$ and it  satisfies
				\begin{align}\label{eqn-sup-weak}
					\int_{\bO}\nabla v(x)\cdot\nabla\phi(x) dx \geq \; \int_{\bO}f(v(x))\phi(x)dx,\ \text{ for all }\ \phi\in C_0^{\infty}(\bO),\ \phi\geq 0,
				\end{align}
				and 
				\begin{align}\label{eqn-sub-weak}
					\int_{\bO}\nabla v(x)\cdot\nabla\phi(x) dx \leq\; \int_{\bO}f(v(x))\phi(x)dx,\ \text{ for all }\ \phi\in C_0^{\infty}(\bO),\ \phi\geq 0,
				\end{align}
				respectively.
			\end{definition}

			\begin{proposition}\label{Prop-sub-soln}
				Let $f:\mathbb{R}\to\mathbb{R}$ be a $C^1$ function, and let $u_1,u_2\in C^2(\bO)\cap C(\overline{\bO})$ be sub-solutions of problem \eqref{eqn-gen-elliptic}.
				\dela{\begin{align}\label{eqn-sup-sol}
						\Delta u_i + f(u_i) = 0 \ \text{ in }\ \bO,\ u_i=0\ \text{ on }\ \partial\bO, \ i=1,2.
				\end{align}}
				Let us define a function $u$ by 
				\begin{align*}
					u(x)=\max\{u_1(x),u_2(x)\},\ \text{ for }\ x\in\overline{\mathcal{O}}.
				\end{align*}
				Then, $u$ is a sub-solution of \eqref{eqn-gen-elliptic} in the weak sense, i.e., it satisfies \eqref{eqn-sub-weak}.
			\end{proposition}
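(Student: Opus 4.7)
The plan is to establish that $u := \max\{u_1, u_2\}$ lies in $H_0^1(\bO)$ and satisfies the weak sub-solution inequality \eqref{eqn-sub-weak} against every non-negative $\phi \in C_0^\infty(\bO)$. Since $u_1, u_2 \in C^2(\bO) \cap C(\overline{\bO})$ vanish on $\partial \bO$ and $\bO$ is a smooth bounded domain, classical elliptic regularity combined with the pointwise bound $-\Delta u_i \leq f(u_i)$ and $f \in C^1$ gives $u_1, u_2 \in H_0^1(\bO)$. Writing $u = \tfrac{1}{2}\big(u_1 + u_2 + |u_1 - u_2|\big)$ shows $u \in H_0^1(\bO)$ as well. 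Stampacchia's theorem, which asserts $\nabla v = 0$ a.e.\ on $\{v = 0\}$ for $v \in H^1(\bO)$, applied to $v = u_1 - u_2$ yields simultaneously the chain-rule identity
\begin{align*}
\nabla u = \chi_{\{u_1 \geq u_2\}} \nabla u_1 + \chi_{\{u_1 < u_2\}} \nabla u_2 \quad \text{a.e.\ in } \bO
\end{align*}
and the crucial reconciliation $\nabla u_1 = \nabla u_2$ almost everywhere on the coincidence set $\{u_1 = u_2\}$. Moreover, multiplying the classical inequality $-\Delta u_i \leq f(u_i)$ by any non-negative $\eta \in H_0^1(\bO)$ and integrating by parts shows that each $u_i$ satisfies \eqref{eqn-sub-weak} against every non-negative $\eta \in H_0^1(\bO)$, not merely smooth compactly-supported test functions.

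The heart of the argument uses a non-decreasing smooth approximation $\psi_\eps : \R \to [0,1]$ of the Heaviside function, with $\psi_\eps(t) = 0$ for $t \leq 0$ and $\psi_\eps(t) = 1$ for $t \geq \eps$. Fixing a non-negative $\phi \in C_0^\infty(\bO)$, I would set
\begin{align*}
\eta_1 := \phi\, \psi_\eps(u_1 - u_2), \qquad \eta_2 := \phi\,\big(1 - \psi_\eps(u_1 - u_2)\big),
\end{align*}
both of which are non-negative elements of $H_0^1(\bO)$ with compact support inside $\bO$. Testing the weak sub-solution inequality for $u_1$ against $\eta_1$, the one for $u_2$ against $\eta_2$, adding the results, and applying the chain rule produces
\begin{align*}
&\int_\bO \big[\psi_\eps(u_1 - u_2) \nabla u_1 + (1 - \psi_\eps(u_1 - u_2)) \nabla u_2\big] \cdot \nabla \phi \, dx \\
&\quad + \int_\bO \phi\, \psi_\eps'(u_1 - u_2)\, |\nabla (u_1 - u_2)|^2 \, dx \\
&\leq \int_\bO \big[\psi_\eps(u_1 - u_2)\, f(u_1) + (1 - \psi_\eps(u_1 - u_2))\, f(u_2)\big] \phi \, dx.
\end{align*}
Since $\psi_\eps' \geq 0$ and $\phi \geq 0$, the second integral on the left is non-negative and can be discarded.

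Letting $\eps \to 0^+$, dominated convergence (justified by the uniform bound $\psi_\eps \in [0,1]$ together with the $L^1$-dominations $|\nabla u_i|\,|\nabla \phi|$ and $|f(u_i)|\phi$) passes the remaining inequality to
\begin{align*}
\int_\bO \big[\chi_{\{u_1 > u_2\}} \nabla u_1 + \chi_{\{u_1 \leq u_2\}} \nabla u_2\big] \cdot \nabla \phi\, dx \leq \int_\bO \big[\chi_{\{u_1 > u_2\}} f(u_1) + \chi_{\{u_1 \leq u_2\}} f(u_2)\big] \phi\, dx.
\end{align*}
Invoking the identity for $\nabla u$ established in the first paragraph, together with the Stampacchia reconciliation on $\{u_1 = u_2\}$, both sides collapse precisely to $\int_\bO \nabla u \cdot \nabla \phi\, dx$ and $\int_\bO f(u) \phi\, dx$, which is \eqref{eqn-sub-weak}. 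The main obstacle will be the behaviour on the coincidence set $\{u_1 = u_2\}$, since the limit of $\psi_\eps(0)$ depends on the chosen approximation; this is resolved by the a.e.\ matching $\nabla u_1 = \nabla u_2$ and $f(u_1) = f(u_2)$ on that set, which renders any residual ambiguity in the limit harmless.
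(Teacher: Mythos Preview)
Your proof is correct but follows a genuinely different route from the paper's.  The paper partitions $\bO$ into $K_1=\{u_1>u_2\}$ and $K_2=\{u_1\le u_2\}$, applies the divergence theorem on each piece under the extra assumption that $\partial K_1$ is piecewise $C^1$, and then shows the interface term $\int_{\partial K_1}\frac{\partial}{\partial\nu}(u_1-u_2)\,\phi\,dS$ has the right sign via a Hopf-type observation; the case of a non-smooth interface is deferred to \cite{TO-92}.  You instead use a smooth approximation $\psi_\eps$ of the Heaviside function to manufacture the test functions $\phi\,\psi_\eps(u_1-u_2)$ and $\phi\,(1-\psi_\eps(u_1-u_2))$, discard the non-negative commutator $\int_\bO \phi\,\psi_\eps'(u_1-u_2)\,|\nabla(u_1-u_2)|^2\,dx$, and pass to the limit by dominated convergence together with Stampacchia's lemma on $\{u_1=u_2\}$.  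Your approach is more robust: it needs no regularity whatsoever on the coincidence set, no Hopf lemma, and no external reference, so it covers the general case in one stroke.  The paper's approach is more geometric and makes the role of the interface visible, but at the cost of an additional smoothness hypothesis on $\partial K_1$ that must then be removed separately.  One small point: your claim that $u_i\in H_0^1(\bO)$ does not follow purely from $u_i\in C^2(\bO)\cap C(\overline{\bO})$ with zero boundary values (the gradient could in principle fail to be square-integrable near $\partial\bO$); however, since your test functions $\eta_1,\eta_2$ are compactly supported in $\bO$, only local $H^1$-regularity is actually used in the argument, and the paper's own proof glosses over the same point.
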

			
			\begin{proof}
				Let us take
				\begin{align}
					K_1&=\{x\in\mathcal{O}:u_1(x) > u_2(x)\},\\
					K_2&=\{x\in\mathcal{O}:u_1(x) \leq u_2(x)\}.
				\end{align}
				We first assume that $\partial K_1$ is having a piecewise $C^1$ boundary.  The case of $\partial K_1$ without  piecewise $C^1$ boundary the result can still be obtained by closely following the argument used in proof of \cite[Proposition 1]{TO-92}. For all $\phi\in C_0^{\infty}(\bO)$ with $\phi>0$, we have
				\begin{align}
					& \int_{\bO}\nabla u(x)\cdot\nabla\phi(x) dx-\int_{\bO}f(u(x))\phi(x)dx\\
					& =  \int_{K_1}\nabla u_1(x)\cdot\nabla\phi(x) dx-\int_{K_1}f(u_1(x))\phi(x)dx\\
					&\quad+\int_{K_2}\nabla u_2(x)\cdot\nabla\phi(x) dx-\int_{K_2}f(u_2(x))\phi(x)dx.\label{eqn-min-sol}
				\end{align}
				By the divergence theorem, it follows that
				\begin{align}
					& \int_{K_i}\nabla u_i(x)\cdot\nabla\phi(x) dx-\int_{K_i}f(u_i(x))\phi(x)dx\\
					&=\int_{\partial K_i}\frac{\partial u_i(x)}{\partial \nu}\phi(x)dS(x)-\int_{K_i}\Delta u_i(x)\phi(x)dx-\int_{K_i}f(u_i(x))\phi(x)dx. \label{eqn-min-int}
				\end{align}
				Utilizing \eqref{eqn-min-int} in \eqref{eqn-min-sol}, we deduce
				\begin{align}
					&\int_{\bO}\nabla u(x)\cdot\nabla\phi(x) dx-\int_{\bO}f(u(x))\phi(x)dx\\
					&=-\int_{K_1}[\Delta 	u_1(x)+f(u_1(x))]\phi(x)dx-\int_{K_2}[\Delta u_2(x)+f(u_2(x))]\phi(x)dx\\
					&\quad+\int_{\partial K_1}\frac{\partial u_1(x)}{\partial \nu}\phi(x)dS(x)+\int_{\partial K_2}\frac{\partial u_2(x)}{\partial \nu}\phi(x)dS(x) \\
					&=-\int_{K_1}[\Delta u_1(x)+f(u_1(x))]\phi(x)dx-\int_{K_2}[\Delta u_2(x)+f(u_2(x))]\phi(x)dx\\
					&\quad+\int_{\partial K_1}\bigg[\frac{\partial }{\partial \nu}(u_1(x)-u_2(x))\bigg]\phi(x)dS(x)=:J_1+J_2+J_3.
				\end{align}
				Since $u_1$ and $u_2$ are sub-solutions of \eqref{eqn-gen-elliptic}, we infer from Definition \ref{def-super/sub-sol} that $J_1\leq 0$ and $J_2\leq 0$. We only need to show that $J_3\leq 0$. In order to show $J_3\leq 0$, we note from the Hopf's Lemma that
				\begin{align}
					u_1(x)-u_2(x) > 0,\ \text{ for all } \ x\in K_1\ \text{ and }\ u_1(x)-u_2(x)=0,\ \text{ for all }\ x\in \partial K_1.
				\end{align}
				Therefore, we have
				\begin{align}
					\frac{\partial }{\partial \nu}(u_1(x)-u_2(x))\leq 0,\ \text{ for all }\ x\in\partial K_1,
				\end{align}
				so that $J_3\leq 0$. Hence $u$ is a sub-solution of \eqref{eqn-gen-elliptic}  in the  weak sense given in \eqref{eqn-sub-weak}.
			\end{proof}
			
			Next, we have a well-known result on sub-super-solution method in smooth bounded domain case, which is a consequence of \cite[Lemma 2.6]{JLK+FWW-75}.
			
			\begin{proposition}[Sub-super-solution method]\label{Prop-sub-super}
				Let $\overline{u}$ and $\underline{u}$ be super-solution and sub-solution, respectively, of the equation \eqref{eqn-gen-elliptic} with $f(u) = \lambda u - u^{p-1}$ and satisfy
				\begin{align*}
					\underline{u} < \overline{u},\ \text{ in }\ \bO.
				\end{align*}
				Then, there exists a solution $u$ of the equation \eqref{eqn-gen-elliptic} satisfying
				\begin{align*}
					\underline{u} < u < \overline{u},\ \text{ in }\ \bO.
				\end{align*}
			\end{proposition}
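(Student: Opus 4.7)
The plan is to use the classical monotone iteration (Perron) scheme adapted from \cite{JLK+FWW-75}. Since $\underline{u}, \overline{u} \in C^2(\bO)\cap C(\overline{\bO})$ and $\underline{u} < \overline{u}$ in $\bO$, the range of admissible values lies in a bounded interval $[m,M]$, with $m:=\inf_{\bO}\underline{u}$ and $M:=\sup_{\bO}\overline{u}$. On this interval $f(u)=\lambda u - u^{p-1}$ is $C^1$, so we may choose and fix
\begin{align*}
k > \sup_{s\in[m,M]}\abs{f'(s)},
\end{align*}
which ensures that the auxiliary nonlinearity $g(s):=f(s)+ks$ is strictly increasing on $[m,M]$.

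The next step is to set up the iteration. Define $u_0:=\underline{u}$ and, inductively, let $u_{n+1}\in H_0^1(\bO)$ be the unique weak solution of the linear shifted problem
\begin{align*}
-\Delta u_{n+1}+k u_{n+1}=g(u_n) \ \text{ in }\ \bO,\quad u_{n+1}=0\ \text{ on }\ \partial\bO.
\end{align*}
Existence and uniqueness follow from the Lax-Milgram theorem, while elliptic regularity promotes $u_{n+1}\in C^2(\bO)\cap C(\overline{\bO})$ because the right-hand side $g(u_n)$ is continuous. I would then prove by induction the two-sided ordering
\begin{align*}
\underline{u}\leq u_n\leq u_{n+1}\leq \overline{u}\ \text{ in }\ \overline{\bO}.
\end{align*}
The base case $\underline{u}\leq u_1$ is obtained by testing the sub-solution inequality for $\underline{u}$ and the linear equation for $u_1$ against $(\underline{u}-u_1)^+$; the operator $-\Delta+kI$ is coercive and monotone, so the weak maximum principle forces $(\underline{u}-u_1)^+\equiv 0$. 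The upper bound $u_1\leq \overline{u}$ follows symmetrically using the super-solution property. For the inductive step, the monotonicity of $g$ on $[m,M]$ combined with the same maximum-principle argument yields $u_n\leq u_{n+1}\leq \overline{u}$.

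Having a monotone non-decreasing sequence bounded above by $\overline{u}$, I set $u(x):=\lim_{n\to\infty}u_n(x)$ pointwise. Since $g(u_n)$ is uniformly bounded in $L^\infty(\bO)$, standard $W^{2,q}$ estimates (for any $q\in(1,\infty)$) together with Sobolev embedding and Schauder estimates yield uniform bounds of $u_n$ in $C^{2,\alpha}(\overline{\bO})$. By Arzelà--Ascoli, a subsequence converges in $C^2(\overline{\bO})$ to $u$, and passing to the limit in the iteration gives $-\Delta u+ku=g(u)$, i.e., $-\Delta u=f(u)$ in $\bO$ with $u=0$ on $\partial\bO$. By construction $\underline{u}\leq u\leq \overline{u}$.

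Finally, the strict inequalities $\underline{u}<u<\overline{u}$ in $\bO$ follow from the strong maximum principle (Theorem \ref{Thm-Str-max-prin}). Indeed, set $w:=u-\underline{u}\geq 0$. Using the sub-solution inequality for $\underline{u}$ and the equation for $u$, one obtains
\begin{align*}
-\Delta w \geq f(u)-f(\underline{u}) = h(x) w,\ \text{ where }\ h(x):=\int_0^1 f'(\underline{u}+tw)\,dt\in L^\infty(\bO),
\end{align*}
so $\Delta w + (-h)w\leq 0$ with $w\geq 0$ in $\bO$ and $w=0$ on $\partial\bO$. Theorem \ref{Thm-Str-max-prin} (applied to $-w$ up to sign adjustments, or directly to the linear operator with bounded zeroth-order coefficient) forces either $w\equiv 0$, which is excluded because $\underline{u}$ is not itself a solution in the strict sense (if it were, we would already have $u=\underline{u}$, but in any case the assumption $\underline{u}<\overline{u}$ together with the iteration produces $u\neq \underline{u}$), or $w>0$ on $\bO$. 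The same argument applied to $\overline{u}-u$ gives the upper strict inequality. The main obstacle I anticipate is handling the strict inequality cleanly when either $\underline{u}$ or $\overline{u}$ happens to coincide with $u$ at interior points, which requires invoking Hopf's lemma on the boundary of the coincidence set exactly as in the proof of Proposition \ref{Prop-sub-soln}.
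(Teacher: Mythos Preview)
Your proposal is correct and uses essentially the same monotone-iteration scheme from \cite{JLK+FWW-75} as the paper, the only cosmetic difference being that you iterate upward from $u_0=\underline{u}$ while the paper iterates downward from $u_0=\overline{u}$. Your treatment is in fact more detailed: the paper only establishes $\underline{u}\le u\le\overline{u}$ and does not supply the strong-maximum-principle argument for the strict inequalities that you sketch at the end.
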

			
			\begin{proof}
				The proof follows from the well-known result \cite[Lemma 2.6]{JLK+FWW-75}.	
				The core of the proof relies on a classical iteration argument outlined as follows. Let us define
				\begin{align*}
					f(u) := \lambda u - u^{p-1}, \ k = \sup \{f^\prime (u): \underline{u}(x) \leq u \leq \overline{u}(x),\ \text{ for }\ x\in\bO\}
				\end{align*}
				and if necessary, add a positive constant to $k$ to insure that $k>0$. Now set $u_0 = \overline{u}$, and then, define $u_{j+1}$ inductively as the unique solution on $\bO$ of
				\begin{align}\label{eqn-iterative-prob}
					\left\{
					\begin{aligned}
						-\Delta u_{j+1}(x) + k u_{j+1}(x) & = f(u_j(x)) + k u_j(x),& \ \text{ for }\ x\in \bO,\\
						u_{j+1}(x) & = 0,& \ \text{ for }\ x \in \partial\bO.
					\end{aligned}
					\right.
				\end{align}
				Note that at $j=0$ iteration, we obtain $-\Delta u_{1} + k u_{1} = f(u_0) + k u_0$. Since $u_0 = \overline{u}$ is a super-solution of \eqref{eqn-gen-elliptic}, it implies
				\begin{align*}
					-\Delta u_{1} + k (u_{1}- \overline{u}) = f(\overline{u}) \leq -\Delta \overline{u} \implies -\Delta (u_{1} - \overline{u}) + k (u_{1}- \overline{u}) \leq 0.
				\end{align*}
				By using the maximum principle (or Hopf's Lemma), we deduce that there exists a point $x_0 \in \partial \bO$ such that
				\begin{align*}
					u_{1}(x) - \overline{u}(x) \leq u_{1}(x_0) - \overline{u}(x_0) \implies u_{1}(x) - \overline{u}(x) \leq 0 \implies u_{1}(x) \leq \overline{u}(x),\ \text{ for }\ x\in \bO,
				\end{align*} 
				where we have used the fact that $u_1, \overline{u} =0$, on $\partial\bO$.
				
				Similarly, by induction one can show that
				\begin{align*}
					\underline{u}\leq \cdots \leq u_{j+1} \leq u_j \leq \cdots \leq \overline{u},\ \text{a.e. in }\ \bO.
				\end{align*}
				The approach employed in \cite[p.\;370]{RC+DH-62} shows that the iterative sequence $\{u_j\}_{j\in \N}$ converges to a classical solution $u$ of the problem \eqref{eqn-gen-elliptic}. Since $0 < \underline{u} \leq u \leq \overline{u}$, one has $u>0$ too.
			\end{proof}
			
			We next summarize the uniqueness result from \cite[Lemma 1]{TO-92} for positive solutions of \eqref{eqn-stationary-trans}.
			
			\begin{proposition}\label{Prop-uniq-positive}
			There exists at most one solution to the constrained stationary problem \eqref{eqn-stationary-trans}.
		\end{proposition}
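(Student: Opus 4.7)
My approach is a Picone-type identity, which treats the pair $(u_1,\lambda_1)$, $(u_2,\lambda_2)$ simultaneously without any preliminary reduction to $\lambda_1=\lambda_2$. Suppose, for contradiction, that $u_1, u_2$ are two distinct positive classical solutions of \eqref{eqn-stationary-trans} with Lagrange multipliers $\lambda_1, \lambda_2 > 0$.

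First I would verify that $u_2^2/u_1$ and $u_1^2/u_2$ are admissible test functions in $H_0^1(\bO)$. Since each $u_i \in C^2(\bO)\cap C(\overline{\bO})$ is positive in $\bO$ and vanishes on $\partial\bO$, the strong maximum principle (Theorem \ref{Thm-Str-max-prin}) together with the Hopf boundary point lemma yields $\partial_\nu u_i < 0$ on $\partial\bO$, so each ratio $u_i/u_j$ extends to a bounded continuous function on $\overline{\bO}$. Writing $u_2^2/u_1 = u_2\cdot(u_2/u_1)$ then places this function in $H_0^1(\bO)$, and likewise for $u_1^2/u_2$.

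Next I would invoke the pointwise Picone inequality
\begin{equation*}
|\nabla u_2|^2 - \nabla u_1\cdot\nabla\!\left(\frac{u_2^2}{u_1}\right) = \left|\nabla u_2 - \frac{u_2}{u_1}\nabla u_1\right|^2 \geq 0,
\end{equation*}
integrate over $\bO$, and use $u_2^2/u_1$ as a test function in the weak form of the equation for $u_1$ (the boundary integrals vanish because $u_2^2/u_1\in H_0^1(\bO)$). This yields
\begin{equation*}
\int_\bO |\nabla u_2|^2\,dx \geq \lambda_1\int_\bO u_2^2\,dx - \int_\bO u_1^{p-2} u_2^2\,dx,
\end{equation*}
together with the symmetric estimate with $u_1,u_2$ interchanged. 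Testing each PDE against its own solution produces $\lambda_i = \|\nabla u_i\|_{L^2(\bO)}^2 + \|u_i\|_{L^p(\bO)}^p$, and the constraint $\|u_i\|_{L^2(\bO)}=1$ causes the $\lambda_i$ terms to cancel when I add the two Picone estimates, leaving
\begin{equation*}
\int_\bO (u_1^{p-2} - u_2^{p-2})(u_1^2 - u_2^2)\,dx \leq 0.
\end{equation*}

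Since $p\geq 2$, both $x\mapsto x^{p-2}$ and $x\mapsto x^2$ are non-decreasing on $(0,\infty)$, so the integrand is pointwise non-negative; for $p>2$ it is strictly positive wherever $u_1\neq u_2$, which forces $u_1\equiv u_2$. For $p=2$ the integrand vanishes identically, so the sum of the two Picone estimates must itself be an equality, whence $u_2/u_1$ is constant on $\bO$ by the equality case of Picone, and the $L^2$-normalization then gives $u_1=u_2$. The main technical point will be justifying $u_2^2/u_1\in H_0^1(\bO)$ and the validity of the integration by parts; this is precisely where Hopf's lemma is indispensable, as it ensures the two solutions decay at comparable rates near $\partial\bO$ and thus keeps all the ratios bounded.
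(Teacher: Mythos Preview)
Your Picone-identity argument is correct and takes a genuinely different route from the paper. The paper follows Ouyang's sub/super-solution strategy: it first reduces to the ordered case $u_1>u_2$ on $\bO$ (by taking $\max\{u_1,u_2\}$ as a subsolution and a large constant as a supersolution, then invoking the monotone iteration of Proposition~\ref{Prop-sub-super}), applies the strong maximum principle to upgrade $u_1\geq u_2$ to strict inequality, and finally cross-tests each equation against the other solution to obtain $\int_\bO u_1u_2(u_1^{p-2}-u_2^{p-2})\,dx=0$, contradicting $u_1>u_2>0$.

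Your approach buys two things. First, it dispenses entirely with the sub/super-solution machinery and the preliminary ordering step; the Picone inequality works regardless of the sign of $u_1-u_2$. Second, and more importantly, the paper's cross-testing step tacitly uses the same multiplier $\lambda$ for both solutions, whereas in the constrained problem \eqref{eqn-stationary-trans} each solution carries its own $\lambda_i=\|\nabla u_i\|_{L^2(\bO)}^2+\|u_i\|_{L^p(\bO)}^p$. Your argument handles $\lambda_1\neq\lambda_2$ directly: the constraint $\|u_i\|_{L^2(\bO)}=1$ makes the $\lambda_i$ terms cancel when the two Picone estimates are added, a point the paper's proof does not explicitly address. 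The price you pay is the need to justify $u_2^2/u_1\in H_0^1(\bO)$ via Hopf's lemma and $C^1(\overline{\bO})$ regularity of the $u_i$ (which follows from Schauder theory on the $C^2$ domain); you have correctly identified this as the essential technical point. Your treatment of the borderline case $p=2$ through the equality case of Picone is also clean.
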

		
		\begin{proof}
			Assume that for some $\lambda >0$, there exist two positive solutions $u_1$ and $u_2$ of the problem \eqref{eqn-stationary-trans} with $u_1 \ne u_2$. We may assume
			\begin{align}
				u_1 \geq u_2,\ \text{ in }\ \bO.\label{eqn-Prop-uniq-1}
			\end{align}
			If $u_1 \ngeq u_2$ and $u_2 \ngeq u_1$, then we set
			\begin{align*}
				\underline{u}(x) = \max\{u_1(x), u_2(x)\},\ \text{ for }\ x\in\bO.
			\end{align*}
			It is obvious that $\underline{u} >0$ in $\bO$ and from the Proposition \ref{Prop-sub-soln} that $\underline{u}$ is a sub-solution of \eqref{eqn-stationary-trans}. By using the definition of super-solution it is easy to check that
			\begin{align*}
				0< u_c(x) = \text{constant} > \max\Big\{\lambda^{\frac{1}{p-1}}, \max_{x\in\overline{\bO}}\underline{u}(x)\Big\},
			\end{align*}
			is a super-solution of \eqref{eqn-stationary-trans}. By the sub-super-solution method (see Proposition \ref{Prop-sub-super}) there is a solution $v$ of \eqref{eqn-stationary-trans} satisfying
			\begin{align*}
				\underline{u} \leq v \leq u_c, \ \text{ in }\ \bO.
			\end{align*}
			So we may choose $v$ to replace $u_2$ such that the new pair of solutions satisfy 	\eqref{eqn-Prop-uniq-1}. Without loss of generality, we may assume that 
			\begin{align*}
				u_1 > u_2, \ \text{ on }\ \bO.
			\end{align*}
			\textit{Claim: } $u_1 > u_2,\ \text{ on }\ \bO.$
			\begin{proof}[Proof of the Claim:]
				Suppose on the contrary that $u_1 \geq u_2$ and $u_1(x_0) = u_2(x_0)$, for some $x_0 \in \bO$. Then, by setting
				\begin{align*}
					w(x) = u_1(x) - u_2(x),
				\end{align*}
				it follows from the assumptions on $u_1,u_2$ that
				\begin{align*}
					\left\{
					\begin{aligned}
						\Delta w(x) + f(x)(u_1(x) - u_2(x)) + \lambda (x)w = 0, & \ \text{ for }\ x\in \bO,\\
						w(x) \geq 0, & \ \text{ for }\ x\in\bO,\\
						w(x) = 0, &\ \text{ for }\ x\in\partial\bO,
					\end{aligned}
					\right.
				\end{align*}
				where $f(x)=-\int_0^1 (\theta u_1(x) + (1-\theta) u_2(x))^{p-1} d\theta$. 
				Since $\bO$ is bounded and $u_i\in C^2(\bO)\cap C(\overline{\bO})$, for $i=1,2$, it implies $f(x) $ is bounded on $\overline{\bO}$. Therefore, from the above system and using the fact $\lambda>0$, we deduce 
				\begin{align*}
					\left\{
					\begin{aligned}
						\Delta w(x) + f(x)w(x) \leq 0, & \ \text{ for }\ x\in\bO,\\
						w(x) \geq 0, & \ \text{ for }\ x\in\bO,\\
						w(x) = 0, &\ \text{ for }\ x\in\partial\bO.
					\end{aligned}
					\right.
				\end{align*}
				By using the strong maximum principle (see Theorem \ref{Thm-Str-max-prin}), we have $w>0$ on $\bO$, which contradicts the fact that $w(x_0) = 0$, for $x_0\in\bO$. Hence $u_1> u_2$ on $\bO$.
			\end{proof}
			Next, observe that $u_1$ and $u_2$ are solutions of \eqref{eqn-stationary-trans}, i.e.,
			\begin{align}
				\Delta u_1 - u_1^{p-1} + \lambda u_1 = 0, & \ \text{ in }\ \bO, \label{eqn-Prop-uniq-2}\\
				\Delta u_2 - u_2^{p-1} + \lambda u_2 = 0, & \ \text{ in }\ \bO.\label{eqn-Prop-uniq-3}
			\end{align}
			After multiplying \eqref{eqn-Prop-uniq-2} by $u_2$ and integrating by parts over 
			$\bO$, it follows that
			\begin{align}
				- \int_\bO \nabla u_1(x) \cdot \nabla u_2(x) dx - \int_\bO u_1^{p-1}(x) u_2(x) dx + \lambda\int_\bO u_1(x) u_2(x) dx = 0. \label{eqn-Prop-uniq-4}
			\end{align}
			Similarly, we obtain
			\begin{align}
				- \int_\bO \nabla u_1 (x)\cdot \nabla u_2(x) dx - \int_\bO u_2^{p-1}(x) u_1(x) dx + \lambda\int_\bO u_1(x) u_2(x) dx = 0. \label{eqn-Prop-uniq-5}
			\end{align}
			Now subtracting \eqref{eqn-Prop-uniq-5} from \eqref{eqn-Prop-uniq-4}, we extract
			\begin{align*}
				\int_\bO u_1(x) u_2(x) (u_1^{p-1}(x) - u_2^{p-1}(x)) dx = 0.
			\end{align*}
			But $u_1 > u_2 >0$, so the left hand side of the above integral equation must be positive. This contradiction implies $u_1 \equiv u_2$.
		\end{proof}

		\subsection{Asymptotic profile}
		In this subsection, we first produce a result that for any initial data in $L^p(\bO) \cap H_0^1(\bO)\cap\bM$, there exists a sequence of times, along which, the unique strong solution of \eqref{eqn-main-problem-copy} converges in $L^p(\bO) \cap H_0^1(\bO)$ to a stationary solution of the problem \eqref{eqn-stationary}. Subsequently, for positive initial data $u_0$, we make use of the uniqueness of positive ground states established in Section \ref{Sec-uni-ground} to further strengthen the result, demonstrating that the solution converges as time approaches infinity.

		
		The following proposition is motivated from  \cite[Proposition 4.1 and Corollary 4.2]{PA+PC+BS-24}.
		
		\begin{proposition}\label{Prop-Stationary}
				Let us choose and fix 
				$u_0 \in L^p(\bO) \cap H_0^1(\bO)\cap\bM$. Suppose
				\begin{align*}
					u & \in C([0,\infty); L^p(\bO) \cap H_0^1(\bO)\cap \bM),
				\end{align*}
				is the unique strong solution to \eqref{eqn-main-problem-copy} guaranteed by Theorem \ref{Thm-main-wellposed} such that
				\begin{align*}
					\frac{\partial u}{\partial t} \in L^2(0,\infty; L^2(\bO)).
				\end{align*}
				Then, one may extract a sequence of times $\{\tau_n\}_{n\in \N}$, $\lim_{n\to\infty}\tau_n =\infty$, such that
				\begin{align}
					u(\tau_n) \to u_{\infty} \ \text{ in }\  L^p(\bO) \cap H_0^1(\bO), \ \ \bS(u(\tau_n)) \to \bS(u_{\infty}) \ \text{ in }\ \R,\ \text{ as }\ n \to \infty,
				\end{align}
				where $\bS(u)= \norm{\nabla u}_{L^2(\bO)}^2 + \norm{u}_{L^p(\bO)}^p$ and $u_{\infty}\in\bM$ solves 
				\begin{align}\label{eqn-stationary-1}
					\Delta u_{\infty} - \abs{u_{\infty}}^{p-2} u_{\infty}
					+ \bS(u_{\infty}) u_{\infty} = 0 \ \text{ in }\ L^{\frac{p}{p-1}}(\bO)+H^{-1}(\bO).
				\end{align}
		\end{proposition}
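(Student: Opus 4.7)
My approach combines energy dissipation, weak compactness, and the Radon--Riesz property. The starting point is the hypothesis $\partial_t u \in L^2(0,\infty; L^2(\bO))$: since the non-negative integrand cannot remain bounded away from $0$, there exists a sequence $\tau_n \uparrow \infty$ along which $\|\partial_t u(\tau_n)\|_{L^2(\bO)} \to 0$. Because $u$ is a strong solution of \eqref{eqn-main-problem-copy}, this is equivalent to $\Gn(u(\tau_n)) \to 0$ in $L^2(\bO)$. Next, I would invoke the energy equality \eqref{eqn-dec-ener} together with Remark \ref{Rmk-En-dissipative} to get $\bE(u(\tau_n)) \leq \bE(u_0)$, whence $\{u(\tau_n)\}$ is bounded in $L^p(\bO)\cap H_0^1(\bO)$. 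By reflexivity, a subsequence satisfies $u(\tau_n) \rightharpoonup u_\infty$ in $L^p(\bO) \cap H_0^1(\bO)$; the compact embedding $H_0^1(\bO) \embed L^2(\bO)$ yields strong $L^2$--convergence, so $\|u_\infty\|_{L^2(\bO)} = 1$ and $u_\infty \in \bM$. A further extraction gives $u(\tau_n) \to u_\infty$ a.e. in $\bO$ and, since $\bS(u(\tau_n))$ is a bounded scalar sequence, $\bS(u(\tau_n)) \to L$ for some $L \geq 0$.

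The core step is to pass to the limit in $\Gn(u(\tau_n)) \to 0$ paired against an arbitrary $\varphi \in L^p(\bO) \cap H_0^1(\bO) \subset L^2(\bO)$. The Laplacian term converges by weak $H_0^1$--convergence; the sequence $|u(\tau_n)|^{p-2}u(\tau_n)$ is bounded in $L^{p'}(\bO)$ (with $p'=p/(p-1)$) and converges a.e., so by the Lions Lemma \ref{Lem-Lions} it converges weakly in $L^{p'}(\bO)$ to $|u_\infty|^{p-2}u_\infty$; the projected term passes to the limit via $\bS(u(\tau_n)) \to L$ and the strong $L^2$--convergence of $u(\tau_n)$. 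This produces
\[
\Delta u_\infty - |u_\infty|^{p-2}u_\infty + L\, u_\infty = 0 \quad \text{in } L^{p'}(\bO) + H^{-1}(\bO).
\]
Testing this identity with $u_\infty$ (and using $\|u_\infty\|_{L^2}^2 = 1$) gives $L = \|\nabla u_\infty\|_{L^2}^2 + \|u_\infty\|_{L^p}^p = \bS(u_\infty)$, which simultaneously identifies the Lagrange multiplier and establishes the stationary equation \eqref{eqn-stationary-1}, as well as the claimed scalar convergence $\bS(u(\tau_n)) \to \bS(u_\infty)$.

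For the remaining strong convergence claim, I would combine the just-established
\[
\|\nabla u(\tau_n)\|_{L^2}^2 + \|u(\tau_n)\|_{L^p}^p \longrightarrow \|\nabla u_\infty\|_{L^2}^2 + \|u_\infty\|_{L^p}^p,
\]
with the weak lower semicontinuity of the two individual norms (Lemma \ref{Lem-E-wls-coer}). A standard $\liminf/\limsup$ sandwich then forces $\|\nabla u(\tau_n)\|_{L^2} \to \|\nabla u_\infty\|_{L^2}$ and $\|u(\tau_n)\|_{L^p} \to \|u_\infty\|_{L^p}$ separately. Since $H_0^1(\bO)$ and $L^p(\bO)$ are uniformly convex, the Radon--Riesz property (Proposition \ref{Prop-Radon-Riesz}) upgrades weak to strong convergence in each factor, and thereby in $L^p(\bO)\cap H_0^1(\bO)$. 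The main obstacle is the identification of the nonlocal coefficient $\bS(u(\tau_n))$, which is not preserved under pure weak convergence; the crucial trick is to introduce its limit $L$ as an unknown and then determine it by testing the limiting elliptic equation against $u_\infty$ itself.
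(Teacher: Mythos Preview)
Your proposal is correct and follows essentially the same route as the paper's proof: select $\tau_n$ with $\partial_t u(\tau_n)\to 0$ in $L^2$, extract weak $L^p\cap H_0^1$ and strong $L^2$ limits, pass to the limit in the equation to obtain the stationary identity with an unknown multiplier $L$, identify $L=\bS(u_\infty)$ by testing against $u_\infty$, and then upgrade to strong convergence via Radon--Riesz. Your componentwise $\liminf/\limsup$ sandwich to separate the $H_0^1$-- and $L^p$--norm convergences before invoking Proposition~\ref{Prop-Radon-Riesz} in each factor is in fact a bit more explicit than the paper, which applies Radon--Riesz directly after noting that $\bS(u(\tau_n))\to\bS(u_\infty)$ ``implies the norm convergence in $L^p(\bO)\cap H_0^1(\bO)$.''
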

		
		\begin{proof}
			 Let us first choose and fix 
				$u_0 \in L^p(\bO) \cap H_0^1(\bO)\cap\bM$. Then, there exists a function $u$, which is the unique strong solution to problem \eqref{eqn-main-problem-copy} (see Theorem \ref{Thm-main-wellposed}). From Definition \ref{def-strong-soln}, note that 
			$$\frac{\partial u}{\partial t}
			\in L^2(0,\infty, L^2(\bO)).$$
			Thanks to the Banach-Alaoglu Theorem \cite[see Theorem 3.1]{JBC-90}, there exists a subsequence (still labeled by the same notation) $\{\tau_n\}_{n\in \N}$, $\tau_n \to \infty$ as $n \to \infty$, such that
			\begin{align}\label{Prop-Stationary-1}
				\frac{\partial u(\tau_n)}{\partial t}\to 0
				\ \text{ in }\ L^2(\bO).
			\end{align}
			Moreover, observe from the estimates \eqref{eqn-Eu} that
			$$\sup_{t>0} \abs{\bS(u(t))} \leq C,$$
			and it implies 
			$$ \norm{u}_{L^\infty(0,\infty;L^p(\bO) \cap H_0^1(\bO))} \leq C.$$
			Once again the Banach-Alaoglu Theorem, as $n\to \infty$, yields
			\begin{align}\label{Prop-Stationary-2}
				\left\{
				\begin{aligned}
					u(\tau_n) & \rightharpoonup u_{\infty} \ \ \, \text{ in }\  L^p(\bO) \cap H_0^1(\bO),\\
					\bS(u(\tau_n)) & \to \bS_\infty \ \ \text{ in }\ \ \R.
				\end{aligned}
				\right.
			\end{align}
			Due to the compactness of the embedding $H_0^1(\bO)\embed L^2(\bO)$, taking a further subsequence of $\{\tau_n\}_{n\in \N}$ (denoted as before) such that $$u(\tau_n)\to u_{\infty}\ \text{ in }\ L^2(\bO),$$ and $u(\tau_n,x)\to u_\infty(x),$ for a.e. $x\in\mathcal{O}$, along a further subsequence.  Since $\|u(\tau_n)\|_{L^2(\bO)}=1$,  it is immediate that $\|u_{\infty}\|_{L^2(\bO)}=1$, i.e., $u_{\infty}\in\bM$.
			Thus, by using the above strong convergence and the convergences given in \eqref{Prop-Stationary-2},
			we obtain
			\begin{align}\label{Prop-Stationary-3}
				\left\{
				\begin{aligned}
					\abs{u(\tau_n)}^{p-2} u(\tau_n) & \rightharpoonup \abs{u_{\infty}}^{p-2} u_{\infty}  \ \ \text{ in }\ \ L^{\frac{p}{p-1}}(\bO)+H^{-1}(\bO),\\
					\Delta u(\tau_n) & \rightharpoonup \Delta u_{\infty} \ \ \text{ in }\ \ L^{\frac{p}{p-1}}(\bO)+H^{-1}(\bO).
				\end{aligned}
				\right.
			\end{align}
			Therefore, by utilizing all the convergences \eqref{Prop-Stationary-1}, \eqref{Prop-Stationary-2} and  \eqref{Prop-Stationary-3} in the equation \eqref{eqn-main-problem-copy}, we obtain
			\begin{align*}
				\langle \Delta u(\tau_n) - \abs{u(\tau_n)}^{p-2}u(\tau_n)+ \bS(u(\tau_n)) u(\tau_n), \psi \rangle
				\to \langle \Delta u_{\infty} - \abs{u_{\infty}}^{p-2}u_{\infty} + \bS_\infty u_{\infty}, \psi \rangle,
			\end{align*}
			for every $\psi\in L^{p}(\bO)\cap H_0^{1}(\bO)$. In particular, we have
			\begin{align*}
				\Delta u_{\infty} - \abs{u_{\infty}}^{p-2}u_{\infty} + \bS_\infty u_{\infty} = 0  \ \text{ in }\ L^{\frac{p}{p-1}}(\bO)+H^{-1}(\bO).
			\end{align*}
			Now, let us take $L^2$ inner-product of the above equation with $u_{\infty}$, we deduce
			\begin{align*}
				-\norm{\nabla u_{\infty}}_{L^2(\bO)}^2 - \norm{u_{\infty}}_{L^p(\bO)}^p
				+ \bS_\infty \|u_{\infty}\|_{L^2(\bO)}^2 = 0 \implies \bS_\infty = \bS (u_{\infty}).
			\end{align*}
			Note that
			\begin{align*}
				\bS(u(\tau_n))=\norm{\nabla u(\tau_n)}_{L^2(\bO)}^2 + \norm{u(\tau_n)}_{L^p(\bO)}^p\to  \norm{\nabla u_{\infty}}_{L^2(\bO)}^2 + \norm{u_{\infty}}_{L^p(\bO)}^p=\bS (u_{\infty}),
			\end{align*}
			implies the norm convergence of $\{u(\tau_n)\}_{n\in\mathbb{N}}$ in $L^p(\bO)\cap H_0^1(\bO)$. Together with this fact, the weak convergence given in \eqref{Prop-Stationary-2} provides the strong convergence (see Theorem \ref{Prop-Radon-Riesz}), i.e., $u(\tau_n) \to u_{\infty} \ \text{in}\  L^p(\bO) \cap H_0^1(\bO)$.
		\end{proof}
		
		
		By combining Propositions \ref{Prop-uniq-positive} and Proposition \ref{Prop-Stationary}, we have the main result of this section, which is motivated from  \cite[Theorem 4.3]{PA+PC+BS-24}. We infer from the  maximum principle \ref{sec-max-principle} that if $u_0 \in L^p(\bO) \cap H_0^1(\bO)\cap \bM$ is non-negative, then the unique strong solution to the problem \eqref{eqn-main-problem-copy} is also non-negative.

			\begin{theorem}\label{Thm-main-assymp}
				Suppose $\bU  \in L^p(\bO) \cap H_0^1(\bO)\cap\bM$ is the unique positive solution to the minimization problem \eqref{min-problem}, which also solves the stationary equation \eqref{eqn-stationary-copy} (guaranteed by Theorem \ref{Thm-ground-soln} and Proposition \ref{Prop-uniq-positive}).
				If $u_0 \in L^p(\bO) \cap H_0^1(\bO)\cap \bM^+$, where $$ \bM^+ := \{u\in L^2(\bO) : u \geq 0 \ \text{ and }\  \norm{u}_{L^2(\bO)} = 1\},$$ and
				\begin{align*}
					&u \in C([0,\infty);  L^p(\bO) \cap H_0^1(\bO)\cap \bM^+)
				\end{align*}
				is the unique strong solution to the problem \eqref{eqn-main-problem-copy} (see Theorem \ref{Thm-main-wellposed}), then
				\begin{equation}\label{eqn-convergence full set}
					u(t) \rightarrow \bU  \ \mbox{ in } \  L^p(\bO) \cap H_0^1(\bO),\ \mbox{ as } \ t\rightarrow \infty.
				\end{equation}
			\end{theorem}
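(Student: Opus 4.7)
The plan is to prove $u(t)\to\bU$ in $L^p(\bO)\cap H_0^1(\bO)$ by first showing that the energy $\bE(u(t))$ converges to $\bE(\bU)$, then identifying every weak subsequential limit of the trajectory as $\bU$ via the uniqueness of the positive ground state, and finally upgrading weak convergence to strong convergence through the Radon-Riesz property.

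First I would use the energy dissipation \eqref{eqn-Eu} to conclude that $t\mapsto\bE(u(t))$ is non-increasing on $[0,\infty)$. Since $\bU$ minimizes $\bE$ on $\bM$ (Theorem \ref{Thm-ground-soln}), $\bE(u(t))\geq\bE(\bU)$ for every $t$, so $\bE(u(t))$ decreases monotonically to some limit $E^\ast\geq\bE(\bU)$. Proposition \ref{Prop-Stationary} produces a sequence $\tau_n\to\infty$ and a stationary solution $u_\infty\in\bM$ with $u(\tau_n)\to u_\infty$ strongly in $L^p(\bO)\cap H_0^1(\bO)$. The positivity $u(t)\geq 0$ from the maximum principle in Section \ref{sec-max-principle} passes to the strong limit, and the strong maximum principle (Theorem \ref{Thm-Str-max-prin}) together with $\|u_\infty\|_{L^2(\bO)}=1$ forces $u_\infty>0$ in $\bO$; Proposition \ref{Prop-uniq-positive} then yields $u_\infty=\bU$. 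Continuity of $\bE$ on $L^p(\bO)\cap H_0^1(\bO)$ gives $\bE(u(\tau_n))\to\bE(\bU)$, so $E^\ast=\bE(\bU)$, i.e.\ $\bE(u(t))\to\bE(\bU)$ as $t\to\infty$.

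Next, for an arbitrary sequence $t_n\to\infty$, the a priori bound \eqref{eqn-estimate-Lp+H01} provides a subsequence with $u(t_n)\rightharpoonup w$ in $L^p(\bO)\cap H_0^1(\bO)$, and strongly in $L^2(\bO)$ via the compact embedding $H_0^1(\bO)\embed L^2(\bO)$. Hence $\|w\|_{L^2(\bO)}=1$ and (after extracting an a.e.\ convergent subsequence) $w\geq 0$. The weak lower semicontinuity of $\bE$ (Lemma \ref{Lem-E-wls-coer}) combined with $\bE(u(t_n))\to\bE(\bU)$ yields $\bE(w)\leq\bE(\bU)$, while the minimizing character of $\bU$ on $\bM$ gives $\bE(w)\geq\bE(\bU)$. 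Thus $w$ is itself a nonnegative minimizer of $\bE$ on $\bM$; the Lagrange multiplier argument from Theorem \ref{Thm-ground-soln} shows it satisfies the stationary equation \eqref{eqn-stationary}, and the strong maximum principle together with Proposition \ref{Prop-uniq-positive} force $w=\bU$. Since every weak subsequential limit is $\bU$, the full sequence $u(t_n)$ converges weakly to $\bU$ in $L^p(\bO)\cap H_0^1(\bO)$.

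Finally, to upgrade weak to strong convergence, I would exploit that $\bE(u(t_n))\to\bE(\bU)$ combined with the separate weak lower semicontinuity of $\|\cdot\|_{H_0^1(\bO)}^2$ and $\|\cdot\|_{L^p(\bO)}^p$ forces both $\|u(t_n)\|_{H_0^1(\bO)}\to\|\bU\|_{H_0^1(\bO)}$ and $\|u(t_n)\|_{L^p(\bO)}\to\|\bU\|_{L^p(\bO)}$; indeed, if either liminf were strictly larger than the corresponding norm of $\bU$, the other term would contradict its own semicontinuity bound when the two are added to give $\bE(\bU)$. Since $H_0^1(\bO)$ is Hilbertian and $L^p(\bO)$ is uniformly convex for $p>1$, the Radon-Riesz property (Proposition \ref{Prop-Radon-Riesz}) promotes weak convergence together with norm convergence to strong convergence in each space separately, and therefore in $L^p(\bO)\cap H_0^1(\bO)$. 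The main technical obstacle will be pinning down every weak limit $w$ as $\bU$: this relies on propagating strict positivity through the weak limit (so the strong maximum principle applies) and then invoking the uniqueness of the positive ground state from Proposition \ref{Prop-uniq-positive}, without which sign-changing or multiple positive minimizers could a priori exist and obstruct the identification.
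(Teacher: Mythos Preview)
Your argument is correct and follows the same three-phase architecture as the paper's proof: (i) use Proposition~\ref{Prop-Stationary} plus uniqueness to pin the monotone limit of $\bE(u(t))$ at $\bE(\bU)$; (ii) show every weak subsequential limit of the trajectory equals $\bU$; (iii) upgrade weak to strong convergence. The paper wraps (ii)--(iii) in a contradiction argument, while you argue directly via ``every subsequence has a further subsequence converging to $\bU$''; these are logically equivalent.

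The one genuine technical divergence is in how norm convergence is obtained. The paper, having extracted a weakly convergent subsequence $u(\tau_k)\rightharpoonup\wtilde u$, re-invokes Proposition~\ref{Prop-Stationary} to assert $\bS(u(\tau_k))\to\bS(\wtilde u)$ and then applies Radon--Riesz. Strictly, Proposition~\ref{Prop-Stationary} only furnishes \emph{one} sequence with this property, so the paper is tacitly re-running its proof (extracting a further subsequence along which $\partial_t u(\tau_k)\to 0$ in $L^2(\bO)$, so that $\wtilde u$ is stationary and $\bS$ converges). Your route bypasses this entirely: from $\bE(u(t_n))\to\bE(\bU)$ and the separate weak lower semicontinuity of $\tfrac12\|\cdot\|_{H_0^1}^2$ and $\tfrac1p\|\cdot\|_{L^p}^p$, the standard ``sum converges, each liminf is at least its target'' argument forces both norms to converge individually, and Radon--Riesz finishes in each space. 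This is cleaner and self-contained; it also makes the identification $w=\bU$ rest purely on the variational characterization (minimizer $\Rightarrow$ Euler--Lagrange $\Rightarrow$ strict positivity via strong maximum principle $\Rightarrow$ uniqueness), rather than on the dynamical fact $\partial_t u\in L^2(0,\infty;L^2(\bO))$. You are also more careful than the paper in distinguishing $w\geq 0$ from $w>0$ before invoking Proposition~\ref{Prop-uniq-positive}.
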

			
			\begin{remark}
				Antonelli et al. \cite{PA+PC+BS-24} proved the above theorem on balls only for $p < \infty$ when $d \leq 2$, and for $p < \frac{2d}{d-2}$ when $d \geq 3$. In contrast, our result generalizes this to any bounded smooth domains, all dimensions $d \geq 1$ and for all $2 \leq p < \infty$. However, it is worth noting that their analysis also covers the case where the nonlinearity exhibits a pumping effect, corresponding to a positive sign.
			\end{remark}
			
			\begin{proof}
				First note that, Theorem \ref{Thm-ground-soln} and Proposition \ref{Prop-uniq-positive} provide the existence of the unique positive minimizer $\bU$ of the energy functional $\bE$ under the constraint $\|\bU \|_{L^2} =1$, which solves the stationary equation \eqref{eqn-stationary}. 
				
				\vspace{1mm}
				\noindent
				\textit{Step 1.}
				Now, if we choose and fix an initial datum $u_0 \in L^p(\bO) \cap H_0^1(\bO)\cap\bM^+$. Then, there exists a function $u$, guaranteed by Theorem \ref{Thm-main-wellposed}, which is the unique strong solution to the problem \eqref{eqn-main-problem}. By an application of Proposition \ref{Prop-Stationary}, there exists a sequence of times $\{\tau_n\}_{n\in \N}$, with $\lim_{n\to \infty}\tau_n \rightarrow \infty$, be such that
				\begin{align*}
					u(\tau_n) \rightarrow u_{\infty}\ \text{ in }\ L^p(\bO) \cap H_0^1(\bO) \ \text{ and }\ \| u_{\infty}\|_{L^2} = 1 >0,
				\end{align*}
				where $u_{\infty}$ solves the stationary equation \eqref{eqn-stationary}. Since $u_0\in \bM^+$, i.e., $u_0\ge0$, it follows from the maximum principle (cf. subsection \ref{sec-max-principle}) that 
				$u(\tau_n) \geq 0$, and consequently $$u_{\infty} \geq 0.$$ 
				By Proposition \ref{Prop-uniq-positive}, we also know that the positive solution to the stationary problem \eqref{eqn-stationary} is unique, which implies
				\begin{equation}\label{eqn-asy-1}
					u_{\infty} = \bU.
				\end{equation}
				
				\vspace{1mm}
				\noindent
				\textit{Step 2.}
				Next, note from Remark \ref{Rmk-En-dissipative} that the functional $\bE$ varies continuously and decreases over time, ensuring the existence of the limit, say $\bE_\infty$, i.e.,
				\begin{equation}\label{eq:grt}
					\bE(u(t)) \searrow \bE_{\infty} \geq \bE(\bU ),\ \text{ as } \ t \to \infty.
				\end{equation}
				Since $\bU $ is the unique positive minimizer of the optimization problem \eqref{eqn-mini} and $u_\infty = \bU$ from the above step (see \eqref{eqn-asy-1}), we identify $\bE_\infty$ as the ground state energy, due to
				\begin{equation}
					\bE_\infty = \lim_{t \rightarrow \infty} \bE(u(t)) = \lim_{n \rightarrow \infty} \bE(u(\tau_n)) = \bE(u_\infty) = \bE(\bU).\label{eqn-asy-2}
				\end{equation}
				
				\vspace{1mm}
				\noindent
				\textit{Step 3.}
				Lastly, our goal is to establish the convergence $u(t) \rightarrow \bU $ in $L^p(\bO) \cap H_0^1(\bO)$, $t\to\infty$. Assume, to the contrary, that this does not hold. Then, there exists a sequence of times $\{\tau_k\}_{k\in \N}$ with $\tau_k \to \infty$ such that by using \eqref{eqn-asy-2}, we get
				$$
				\bE(u(\tau_k)) \rightarrow \bE(\bU), \ \text{ as }\ k \to \infty$$
				and there exists $\eps > 0,$
				\begin{equation*}
					\| u(\tau_k) - \bU \|_{L^p(\bO) \cap H_0^1(\bO)} \geq \eps, \ \mbox{ for all } \ k \in \N.
				\end{equation*}
				Since $u\in C([0,\infty); L^p(\bO)\cap H_0^1(\bO))$, it follows that $\sup_{k\in\N}\|u(\tau_k)\|_{L^p(\bO) \cap H_0^1(\bO)} \leq C < \infty$. Thus, by the Banach-Alaoglu Theorem, there exists a subsequence (considered by same notation), $\{\tau_k\}_{k\in \N}$ with $\tau_k \to \infty$ and a profile $\wtilde{u} \in L^p(\bO) \cap H_0^1(\bO)$ such that 
				\begin{align}
					u(\tau_k) \rightharpoonup \wtilde{u}\  \text{ in }\ L^p(\bO) \cap H_0^1(\bO), \ \text{ as } \ k \to \infty.\label{eqn-u(tau_k)-u}
				\end{align}
				On other hand, from Proposition \ref{Prop-Stationary}, we also have
				\[ \bS(u(\tau_k)) \to\bS(\wtilde{u}) \ \text{ in }\ \R,\ \text{ as }\ k \to \infty.\]
				Thus, the Radon-Riesz property (see Proposition \ref{Prop-Radon-Riesz}) implies \begin{equation}\label{eqn-asy-3}
					u(\tau_k) \to \wtilde{u}\  \text{ in }\ L^p(\bO) \cap H_0^1(\bO).
				\end{equation}
				By the weakly lower semicontinuity of the functional $\bE$ (see Lemma \ref{Lem-E-wls-coer}), it follows that
				\begin{equation*}
					\bE(\wtilde{u}) \leq \liminf_{k\to\infty} \bE(u(\tau_k)) = \bE(\bU ).
				\end{equation*}
				But, $\bU$ is the unique minimizer of $\bE$ (from \eqref{eqn-asy-2}), which implies $\bE(\wtilde{u}) = \bE(\bU )$. In particular, we deduce
				$$\| u(\tau_k)\|_{L^p(\bO) \cap H_0^1(\bO)} \rightarrow \| \bU \|_{L^p(\bO) \cap H_0^1(\bO)},\ \text{ as }\ k \to \infty.$$ 
				Hence from \eqref{eqn-asy-3}  and the uniqueness of $\bU$, we finally have
				$$u(\tau_k) \rightarrow \bU \ \text{ in }\ L^p(\bO) \cap H_0^1(\bO),\ \text{ as }\ k \to \infty.$$ This contradicts the assumption $\| u(\tau_k) - \bU \|_{L^p(\bO) \cap H_0^1(\bO)} \geq \eps$.
				The proof of the Theorem \ref{Thm-main-assymp} is thus complete.
		\end{proof}

		\appendix
		\renewcommand{\thesection}{\Alph{section}}
		\numberwithin{equation}{section}
		\section{}\label{Appendix}
		This section aims to provide some well-known and celebrated results like the Aubin-Lions Lemma, the Strauss Lemma, the Lions Lemma and the Radon-Riesz property used in this work. Moreover, we give a maximum principle for a damped heat equation used in the proof of Theorem \ref{Thm-main-assymp}.
		
		\subsection{Auxiliary results}
		\begin{theorem}[Aubin-Lions Lemma {\cite[Theorem 5]{JS-87}}]\label{Aubin-Lions Lemma}
			Let $\X, \Bb$ and $\Y$ are Banach spaces with compact embedding $\X \embed \Bb \embed \Y$. Assume $1 \leq p \leq \infty$ and
			\begin{itemize}
				\item[(i)] the set $F$ is bounded in $L^p(0,T;\X)$,
				\item[(ii)] $\norm{\tau_h f - f}_{L^p(0,T-h;\Y)} \to 0\ $ as $\ h\to0$, uniformly for $f\in F$,
			\end{itemize}
			where $\tau_hf(t) = f(t+h)$ for $h>0$. Then the set $F$ is relatively compact in $L^p(0,T;\Bb)$ and in $C([0,T];\Bb)$ if $p=\infty$.
		\end{theorem}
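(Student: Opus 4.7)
The plan is to prove the Aubin--Lions--Simon compactness criterion by combining an Ehrling-type interpolation inequality with a time-averaging regularization, in the spirit of Simon's original argument in \cite{JS-87}. I will treat the case $1\le p<\infty$ first and handle $p=\infty$ separately at the end.

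\textbf{Step 1 (Ehrling inequality).} First I would establish the interpolation lemma: since $\X\embed\Bb$ is compact and $\Bb\embed\Y$ is continuous, for every $\eta>0$ there exists $C_\eta>0$ such that
\begin{align*}
\|v\|_{\Bb}\le \eta\,\|v\|_{\X}+C_\eta\,\|v\|_{\Y},\qquad \forall\,v\in\X.
\end{align*}
This is standard and proved by contradiction, extracting a sequence in the unit ball of $\X$ and using compactness of $\X\embed\Bb$ versus continuity of $\Bb\embed\Y$.

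\textbf{Step 2 (Regularization via Steklov averages).} For $h\in(0,T)$ and $f\in F$ define
\begin{align*}
f_h(t):=\frac{1}{h}\int_t^{t+h}f(s)\,ds,\qquad t\in[0,T-h].
\end{align*}
A Minkowski-type estimate gives $\|f-f_h\|_{L^p(0,T-h;\Y)}\le \sup_{0<s<h}\|\tau_s f-f\|_{L^p(0,T-s;\Y)}$, so by assumption (ii) this tends to $0$ as $h\to0$, \emph{uniformly} in $f\in F$. Combining with the Ehrling inequality applied pointwise in $t$ (and then in $L^p$) yields
\begin{align*}
\|f-f_h\|_{L^p(0,T-h;\Bb)} \le 2\eta\sup_{g\in F}\|g\|_{L^p(0,T;\X)}+C_\eta\|f-f_h\|_{L^p(0,T-h;\Y)},
\end{align*}
and the right-hand side can be made arbitrarily small uniformly in $f\in F$, by first choosing $\eta$ small and then $h$ small.

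\textbf{Step 3 (Relative compactness of the regularized family).} For $h>0$ fixed, I would show that $F_h:=\{f_h:f\in F\}$ is relatively compact in $C([0,T-h];\Bb)$, hence in $L^p(0,T-h;\Bb)$. Two points must be verified:
\begin{itemize}
\item[(a)] Pointwise relative compactness: for each $t$, $\{f_h(t):f\in F\}$ is bounded in $\X$ (by H\"older's inequality and the $L^p(0,T;\X)$-bound in (i)), hence relatively compact in $\Bb$ by compactness of $\X\embed\Bb$.
\item[(b)] Equicontinuity in $\Bb$: this follows from (ii) together with the Ehrling inequality applied to $f_h(t+\delta)-f_h(t)$, after bounding the difference in $\Y$ by a time-translation of $f$ and in $\X$ by the uniform $L^p(0,T;\X)$-bound.
\end{itemize}
Arzel\`a--Ascoli then gives relative compactness of $F_h$ in $C([0,T-h];\Bb)$.

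\textbf{Step 4 (Conclusion by total boundedness).} Combining Steps 2 and 3, $F$ restricted to $[0,T-h]$ is within arbitrarily small $L^p(0,T-h;\Bb)$-distance of the totally bounded set $F_h$, so $F$ itself is totally bounded in $L^p(0,T-h;\Bb)$. A tail estimate in $[T-h,T]$ (using boundedness in $L^p(0,T;\X)\hookrightarrow L^p(0,T;\Bb)$) shrinks to zero as $h\to 0$, yielding total boundedness, hence relative compactness of $F$ in $L^p(0,T;\Bb)$. For $p=\infty$, assumption (ii) together with boundedness in $L^\infty(0,T;\X)$ upgrades the equicontinuity in Step 3 to uniform equicontinuity in $\Bb$ on $[0,T]$, and Arzel\`a--Ascoli then gives relative compactness directly in $C([0,T];\Bb)$.

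The main obstacle I anticipate is the equicontinuity part of Step 3(b): one needs to carefully balance the $\X$-control (coming from boundedness of $F$) against the $\Y$-control (coming from (ii)) via the Ehrling inequality, and to handle both $1\le p<\infty$ and the endpoint case where no honest $L^p$-continuity of translations is free. Once this technical point is in place, the passage from ``$F_h$ compact and $F$ close to $F_h$'' to ``$F$ compact'' is a routine $\varepsilon/3$-argument.
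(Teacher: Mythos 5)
The paper does not prove this statement: it is quoted from Simon \cite[Theorem 5]{JS-87} and used as a black box, so there is no in-paper proof to compare against. Your argument is essentially Simon's original one — Ehrling's interpolation inequality to upgrade the translation estimate from $\Y$ to $\Bb$, Steklov averaging to produce a nearby family that is compact by Arzel\`a--Ascoli, and an $\eps/3$ total-boundedness argument — and Steps 1--3 are sound as outlined (including the $p=1$ endpoint, where equicontinuity of $F_h$ must indeed come from (ii) via Ehrling rather than from the $\X$-bound alone, as you note).

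There is, however, one genuine gap, in Step 4. You claim that the tail $\norm{f}_{L^p(T-h,T;\Bb)}$ ``shrinks to zero as $h\to0$'' using only the boundedness of $F$ in $L^p(0,T;\X)$. That is false for $1\le p<\infty$: a family bounded in $L^p(0,T;\X)$ can concentrate all of its mass near $t=T$ (take $f_n=n^{1/p}\chi_{(T-1/n,T)}\,x_0$ for a fixed $x_0\in\X$), so the tail need not vanish uniformly over such a family. The uniform smallness of the tail has to be extracted from hypothesis (ii) instead: for $0<h\ll\delta$ write $\|f(s)\|_{\Bb}\le\frac{1}{\delta}\int_0^\delta\|f(s)-f(s-\sigma)\|_{\Bb}\,d\sigma+\frac{1}{\delta}\int_{s-\delta}^{s}\|f(r)\|_{\Bb}\,dr$, integrate the $p$-th power over $(T-h,T)$, and control the first term by the translation estimate already upgraded to $\Bb$ via Ehrling and the second by H\"older together with $h/\delta$ small. (The concentrating example above violates (ii), which is exactly why the theorem survives.) With this repair — or by reducing to Simon's Theorem 1, which is how \cite{JS-87} actually organizes the proof — the remainder of your $\eps/3$ argument and the $p=\infty$ case go through.
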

		
		We now present a generalization of the celebrated Lions–Magenes Lemma. \cite{JLL+EM-I-72}.
		
		\begin{theorem}[{\cite[Theorem 1.8]{VVC+MIV-02}}]\label{Thm-Abs-cont}
			Let $\H$ be a Hilbert space, and let $\V, \E, \X$ be Banach spaces, satisfying the inclusions
			\[\V \embed \H \embed \V^\prime \embed \X\  \mbox{ and } \ \E \embed \H \embed \E^\prime \embed \X,\]
			where the spaces $\V^\prime$ and $\E^\prime$ are the duals of $\V$ and $\E$, respectively. Here the space $\H^\prime$ is identified with $\H$. Assume that $p>1$ and $u\in L^{2}(0,T;\V)\cap L^p (0,T; \E)$,  $u'\in D^\prime(0,T; \X)$ and $u^\prime = u_1 + u_2$, where $u_1 \in L^{2}(0,T;\V^\prime)$ and $u_2 \in L^{p^\prime}(0,T;\E^{\prime})$. Then,
			\begin{itemize}
				\item[(i)] $u \in C([0,T]; \H),$
				\item[(ii)] the function $[0,T] \ni t \mapsto\|u(t)\|_{\H}^2 \in \mathbb{R}$ is absolutely continuous on $[0,T]$, and
				\begin{align}
					\|u(t)\|_{\H}^2 = \left\langle u(t),u'(t)\right\rangle = 2\left\langle u(t),u_1(t)\right\rangle + 2\left\langle u(t),u_2(t)\right\rangle,
				\end{align}
			\end{itemize}
			for a.e.  $t\in [0,T]$, i.e.,
			\begin{align*}
				\|u(t)\|_{\H}^2 = \|u(t)\|_{\H}^2 + 2\int_0^t \big[ \left\langle u(s),u_1(s)\right\rangle + \left\langle u(s),u_2(s)\right\rangle\big] ds,
			\end{align*}
			for all $t\in[0,T]$. 
		\end{theorem}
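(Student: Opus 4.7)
The plan is to use the classical time-mollification argument combined with a passage to the limit in duality. First I would extend $u$, $u_1$, $u_2$ beyond the interval $[0,T]$ by reflection followed by a smooth cutoff, so that convolution with a symmetric mollifier $\rho_\varepsilon \in C_0^\infty(-\varepsilon,\varepsilon)$ is well defined on $[0,T]$. Setting $u^\varepsilon = u \ast \rho_\varepsilon$ and $u_i^\varepsilon = u_i \ast \rho_\varepsilon$ for $i=1,2$, the function $u^\varepsilon$ is smooth in $t$ with values in $\V \cap \E \embed \H$, and because convolution commutes with the distributional derivative in $\X$, one has $(u^\varepsilon)' = u_1^\varepsilon + u_2^\varepsilon$ identically in $t$. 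Standard mollifier estimates deliver the strong convergences
\begin{equation*}
u^\varepsilon \to u \ \text{ in }\ L^2(0,T;\V)\cap L^p(0,T;\E), \quad u_1^\varepsilon \to u_1 \ \text{ in }\ L^2(0,T;\V'), \quad u_2^\varepsilon \to u_2 \ \text{ in }\ L^{p'}(0,T;\E').
\end{equation*}

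Second, since $u^\varepsilon$ is smooth in time with values in the Hilbert space $\H$, the chain rule gives
\begin{equation*}
\tfrac{1}{2}\frac{d}{dt}\|u^\varepsilon(t)\|_\H^2 = (u^\varepsilon(t),(u^\varepsilon)'(t))_\H = \langle u^\varepsilon(t),u_1^\varepsilon(t)\rangle_{\V,\V'} + \langle u^\varepsilon(t), u_2^\varepsilon(t)\rangle_{\E,\E'},
\end{equation*}
where the splitting of the $\H$-inner product into two dualities is legitimate via the Gelfand-type inclusions $\V \embed \H \embed \V'$ and $\E \embed \H \embed \E'$. Integrating from $s$ to $t$ and sending $\varepsilon \to 0$, each duality integral converges by pairing of strong convergences, and on the left one uses $u^\varepsilon \to u$ in $L^2(0,T;\H)$ to extract a subsequence with $u^\varepsilon(t) \to u(t)$ in $\H$ for a.e.\ $t$. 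This produces the pointwise-a.e.\ identity
\begin{equation*}
\|u(t)\|_\H^2 - \|u(s)\|_\H^2 = 2\int_s^t \bigl[\langle u(\tau), u_1(\tau)\rangle_{\V,\V'} + \langle u(\tau), u_2(\tau)\rangle_{\E,\E'}\bigr]\,d\tau.
\end{equation*}
Since the integrand on the right is in $L^1(0,T)$, this exhibits an absolutely continuous representative of $t \mapsto \|u(t)\|_\H^2$ with exactly the derivative asserted in (ii).

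To upgrade to $u \in C([0,T];\H)$, I would test the derivative identity $u' = u_1 + u_2$ against a fixed $\varphi \in \V \cap \E$ (dense in $\H$), which forces $t \mapsto (u(t),\varphi)_\H$ to be absolutely continuous, hence $u$ admits a weakly continuous representative into $\H$. Combining weak continuity with the just-established continuity of $t \mapsto \|u(t)\|_\H$, the Radon--Riesz property (Proposition \ref{Prop-Radon-Riesz}) of the uniformly convex Hilbert space $\H$ promotes weak continuity to strong continuity, giving $u \in C([0,T];\H)$.

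The main obstacle will be the passage to the limit $\varepsilon \to 0$ in the two distinct duality pairings simultaneously: the decomposition $u' = u_1 + u_2$ is given only in $\X$, so one has to keep the two components separate and verify that each $u_i^\varepsilon$ inherits the regularity of $u_i$ in its own dual space before pairing with $u^\varepsilon$ in the matching predual; the strong/strong convergence of the factors in $L^2 \cdot L^2$ and $L^p \cdot L^{p'}$ then delivers convergence of each integrand in $L^1(0,T)$. A further subtle point is the endpoint behavior at $t = 0, T$, where the reflection/cutoff extension must be arranged so as not to spoil the identity $(u^\varepsilon)' = u_1^\varepsilon + u_2^\varepsilon$ on the original interval.
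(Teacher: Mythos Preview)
The paper does not prove this theorem at all: it is stated in the appendix with the citation \cite[Theorem 1.8]{VVC+MIV-02} and used as a black box throughout, so there is no proof in the paper to compare your proposal against. Your mollification-and-limit argument is the standard route to such Lions--Magenes-type results and is essentially what one finds in the cited reference; the sketch is sound, including your identification of the delicate points (separating the two dualities when passing to the limit, and handling the endpoints via extension).
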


		
		\begin{theorem}[Strauss Lemma {\cite[Theorem 2.1]{WAS-66}}]\label{Strauss-Lemma}
			Let $\X \subset \Y$ be two Banach spaces such that $\X$ is reflexive and the embedding $\X\embed\Y$ is dense and continuous. Assume also that $\X^{\prime}$ is separable. If $T>0$,  then
			\begin{align*}
				L^\infty(0,T;\X)\cap C_{w}([0,T]; \Y) \cong C_{w}([0,T]; \X).
			\end{align*}
		\end{theorem}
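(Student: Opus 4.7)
The plan is to prove the non-trivial inclusion $L^\infty(0,T;\X)\cap C_w([0,T];\Y) \subset C_w([0,T];\X)$; the reverse inclusion follows since the continuous embedding $\X\embed\Y$ turns weak continuity into $\X$ into weak continuity into $\Y$, and since for a reflexive $\X$ the map $t\mapsto u(t)$ being weakly continuous on the compact interval $[0,T]$ (hence weakly bounded, hence norm-bounded by Banach--Steinhaus) lands in $L^\infty(0,T;\X)$.

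Let $u\in L^\infty(0,T;\X)\cap C_w([0,T];\Y)$ and set $M:=\|u\|_{L^\infty(0,T;\X)}$, so $\|u(t)\|_{\X}\leq M$ for a.e.\ $t\in[0,T]$. The first step is to upgrade this a.e.\ bound to a pointwise statement for every $t_0\in[0,T]$, together with the fact that $u(t_0)\in\X$. Fix $t_0\in[0,T]$ and pick $s_n\to t_0$ in $[0,T]$ with $\|u(s_n)\|_{\X}\leq M$ for every $n$ (possible because the set of ``bad'' times is Lebesgue-null). By reflexivity of $\X$ and the Banach--Alaoglu theorem, a subsequence $u(s_{n_k})\rightharpoonup x_0$ in $\X$ for some $x_0\in\X$ with $\|x_0\|_{\X}\leq M$. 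The continuity of $\X\embed\Y$ transfers this to $u(s_{n_k})\rightharpoonup x_0$ in $\Y$; but by weak continuity in $\Y$ also $u(s_{n_k})\rightharpoonup u(t_0)$ in $\Y$. Identifying the two weak limits in $\Y$ and using the injectivity of the embedding yields $u(t_0)=x_0\in\X$ and $\|u(t_0)\|_{\X}\leq M$.

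The second step is to deduce weak continuity into $\X$. Fix $t_0\in[0,T]$, take any sequence $t_n\to t_0$ in $[0,T]$, and fix $\phi\in\X^\prime$. By Step~1 the sequence $\{u(t_n)\}_{n\in\N}$ is norm-bounded in $\X$ by $M$. Consider an arbitrary subsequence $\{u(t_{n_k})\}$; by reflexivity extract a further subsequence (denoted the same) converging weakly in $\X$ to some $y\in\X$. Arguing exactly as before, $y=u(t_0)$, so $\phi(u(t_{n_k}))\to\phi(u(t_0))$. Since every subsequence of the real sequence $\{\phi(u(t_n))\}$ admits a sub-subsequence converging to $\phi(u(t_0))$, the full sequence converges: $\phi(u(t_n))\to\phi(u(t_0))$. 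As $\phi\in\X^\prime$ and $t_n\to t_0$ are arbitrary, $u\in C_w([0,T];\X)$. The separability of $\X^\prime$ is convenient here because it makes the weak topology on the norm-ball $\{x\in\X:\|x\|_{\X}\leq M\}$ metrizable and so allows one to pass between sequential and topological weak convergence without ambiguity; however, the sequential argument above needs only reflexivity.

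The main obstacle is Step~1: one must promote an essential-supremum bound, which only controls $u$ off a Lebesgue-null set, into a genuine pointwise conclusion valid at every $t_0\in[0,T]$. This is exactly where the hypothesis $u\in C_w([0,T];\Y)$ is essential, because it assigns a definite value $u(t_0)\in\Y$ at each $t_0$ and, combined with the continuous embedding $\X\embed\Y$, lets us identify any weak-$\X$ subsequential limit of nearby good values with $u(t_0)$. Once this is in place, Step~2 is a routine Urysohn-subsequence argument using reflexivity of $\X$.
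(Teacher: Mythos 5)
The paper does not prove this statement; it is quoted as a classical result with a citation to Strauss \cite[Theorem 2.1]{WAS-66}, so there is no in-paper argument to compare against. Your proof is the standard one and is correct: the key step of upgrading the essential bound $\|u(t)\|_{\X}\le M$ to a pointwise bound at \emph{every} $t_0$, by approximating $t_0$ with ``good'' times, extracting a weak-$\X$ limit by reflexivity, and identifying it with $u(t_0)$ via the weak continuity into $\Y$ and the injectivity of the embedding, is exactly the heart of Strauss's argument, and your Urysohn-subsequence step then gives weak continuity into $\X$. The only point worth tightening is the reverse inclusion: membership in $L^\infty(0,T;\X)$ requires strong (Bochner) measurability, not just boundedness; this follows from the Pettis measurability theorem because weak continuity gives weak measurability and the separability of $\X'$ forces $\X$ itself to be separable. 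You might also note explicitly that the norm bound $\|x_0\|_{\X}\le M$ comes from weak lower semicontinuity of the norm, and that density of the embedding is not actually needed in your argument.
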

		
		\begin{lemma}[Lions Lemma {\cite[Lemma 1.3]{JLL-69}}]\label{Lem-Lions}
			Let $\bO_T$ be a bounded open set of $\R^n\times \R$, $g_m$ and $g$ be functions in $L^q(\bO_T)$, for $m\in\N$ and $1<q <\infty$, such that
			\begin{align*}
				\norm{g_m}_{L^q(\bO_T)} \leq C,\ \mbox{ for every } \ m\in\N \; \; \mbox{and} \ \ g_m \to g\;\; \mbox{a.e. in}\ \ \bO_T,\  \mbox{as}\ \ m \to \infty.
			\end{align*}
			Then, $g_m \rightharpoonup g$ in $L^q(\bO_T)$, as $m\to \infty$.
		\end{lemma}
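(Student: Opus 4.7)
The plan is to prove this classical result by combining the reflexivity of $L^q(\bO_T)$ for $1<q<\infty$ (which yields weak compactness), Egorov's theorem (which upgrades the pointwise convergence to almost uniform convergence), and a subsequence-of-subsequence argument to get convergence of the entire sequence.

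First I would use the Banach-Alaoglu theorem: since $L^q(\bO_T)$ is reflexive for $1<q<\infty$ and $\{g_m\}_{m\in\N}$ is bounded in $L^q(\bO_T)$, every subsequence of $\{g_m\}$ admits a further subsequence, say $\{g_{m_k}\}$, converging weakly in $L^q(\bO_T)$ to some limit $h\in L^q(\bO_T)$. Note that $g\in L^q(\bO_T)$ is already part of the hypothesis, but one could also deduce it from Fatou's lemma applied to $\abs{g_m}^q\to\abs{g}^q$ a.e. The whole game is then to identify $h=g$, since the uniqueness of the weak limit of every subsequence forces the full sequence $g_m\rightharpoonup g$ in $L^q(\bO_T)$.

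The key step — and the only nontrivial one — is this identification. Fix a test function $\varphi\in L^{q'}(\bO_T)$ with $\frac{1}{q}+\frac{1}{q'}=1$. Given $\eps>0$, apply Egorov's theorem on the bounded set $\bO_T$ to extract a measurable subset $E_\eps\subset\bO_T$ with $\abs{\bO_T\setminus E_\eps}<\eps$ on which $g_{m_k}\to g$ uniformly. Split the integral:
\begin{align*}
\int_{\bO_T}(g_{m_k}-g)\varphi\,dx=\int_{E_\eps}(g_{m_k}-g)\varphi\,dx+\int_{\bO_T\setminus E_\eps}(g_{m_k}-g)\varphi\,dx.
\end{align*}
The first integral vanishes as $k\to\infty$ by uniform convergence on $E_\eps$ and $\varphi\in L^1(\bO_T)$. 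For the second, H\"older's inequality and the uniform bound $\|g_{m_k}\|_{L^q(\bO_T)}\le C$ together with $\|g\|_{L^q(\bO_T)}<\infty$ give
\begin{align*}
\bigg|\int_{\bO_T\setminus E_\eps}(g_{m_k}-g)\varphi\,dx\bigg|\le 2C\,\|\varphi\mathds{1}_{\bO_T\setminus E_\eps}\|_{L^{q'}(\bO_T)},
\end{align*}
and the right-hand side tends to $0$ as $\eps\to 0$ by the absolute continuity of the Lebesgue integral applied to $\abs{\varphi}^{q'}$. Taking $\limsup_{k\to\infty}$ first and then letting $\eps\to 0$ shows $\int_{\bO_T}g_{m_k}\varphi\,dx\to\int_{\bO_T}g\varphi\,dx$, hence $h=g$.

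The main obstacle is precisely the order of limits in the Egorov splitting: one cannot send $\eps\to 0$ before $k\to\infty$, because the exceptional set depends on $\eps$. The uniform $L^q$-bound on $\{g_m\}$ is exactly what decouples these two limits and makes the $\limsup_k$-then-$\eps\to 0$ argument go through. Everything else (reflexivity, Banach-Alaoglu, the subsequence-of-subsequence principle to upgrade from a subsequence to the whole sequence) is routine, so I expect the write-up to be short.
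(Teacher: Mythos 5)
Your proof is correct, but it follows a genuinely different route from the paper's. The paper never invokes Egorov's theorem or weak compactness: it introduces the increasing measurable sets $E_N:=\{y\in\bO_T:\abs{g_m(y)-g(y)}\le 1 \text{ for } m\ge N\}$ (whose measures exhaust $\abs{\bO_T}$), tests against the dense family $\Phi$ of $L^{q'}$ functions supported in some $E_N$, applies the dominated convergence theorem on each such set (the dominating function being $\abs{\varphi}$ itself, since $\abs{g_m-g}\le 1$ there), and then extends to all of $L^{q'}(\bO_T)$ by density --- a step which, like your H\"older estimate on $\bO_T\setminus E_\eps$, is exactly where the uniform $L^q$ bound enters, since the functionals $\varphi\mapsto\langle g_m-g,\varphi\rangle$ must be uniformly bounded for the density argument to close. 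Your Egorov splitting plays the same structural role as the paper's sets $E_N$ (control the convergence on a large set, use the $L^q$ bound on the small remainder), but your version works directly with an arbitrary $\varphi\in L^{q'}(\bO_T)$ and trades the density argument for the absolute continuity of $\int\abs{\varphi}^{q'}$. One remark: the Banach--Alaoglu/subsequence-of-subsequences scaffolding in your write-up is superfluous, because your identification argument applies verbatim to the full sequence (Egorov applies to $\{g_m\}$ itself), so it directly yields $\int_{\bO_T}(g_m-g)\varphi\,dx\to 0$ for every $\varphi\in L^{q'}(\bO_T)$ without ever extracting a weakly convergent subsequence; neither your approach nor the paper's actually needs reflexivity.
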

		
		\begin{proof}
			Let us choose and fix $N\in\N$ and define the sequence of sets $\{E_N\}_{N\in\N}$ such that 
			\begin{align*}
				E_N := \{y\in \bO_T : \abs{g_m(y) - g(y)} \leq 1 \;\; \mbox{for}\;\; m\geq N\}.
			\end{align*}
			Note that, each $E_N$ is measurable  and increases with $N$. Furthermore, $|E_N|$ converges to  $|\bO_T|$, as $N$ tending to infinity, 
			
			Suppose $\{\Phi_N\}_{N\in\N}$ is a sequence of set of functions in $L^{q^\prime}(\bO_T)$ ($1/q + 1/{q'} =1$) defined as
			\[\Phi_N:= \{\varphi \in L^{q^\prime}(\bO_T) : \supp(\varphi) \subseteq E_N \} \ \ \mbox{and}\ \ \Phi := \bigcup\limits_{N\in \mathbb{N}} \Phi_N=\lim_{N\to\infty}\Phi_N.\] 
			Then by the definition of $E_N,$ it is clear that $\Phi$ is dense in $L^{q^\prime}(\bO_T)$.
			
			Let us choose and fix $N_0\in\N$. Using the definition of $E_{N_0}$, for any $\varphi \in \Phi_{N_0}$, we have
			\begin{align}\label{eqn-JLL-lemma-1}
				| (g_m - g) \varphi |=|g_m - g||\varphi|\leq|\varphi| \ \text{ and }\  g_m \to g\;\; \mbox{a.e. in}\ \ \bO_T.
			\end{align}
			According to the Lebesgue Dominated Convergence Theorem \cite[see Theorem 1.34]{WR-87}, for every $\varphi \in \Phi_{N_0}$
			\[\int_{\bO_T}(g_m(y)-g(y))\varphi(y)dy = \langle g_m - g, \varphi \rangle \to 0\ \ \mbox{as}\  \ m\to \infty.\]
			
			Since $\Phi$ is dense in $L^{q^\prime}(\bO_T)$ and the above convergence holds for any $N_0\in \N$, it implies that for any $\varphi \in \Phi$
			\[ \langle g_m - g, \varphi \rangle \to 0\ \ \mbox{as}\  \ m\to \infty,\]
			which completes the proof.
		\end{proof}
		
		\begin{proposition}[Radon-Riesz property]\label{Prop-Radon-Riesz}
			Assume that $\X$ is a uniformly convex Banach space. Let $\{f_n\}_{n\in\N}$ be a sequence in $\X$ such that $f_n \rightharpoonup f$ in $\X$ and
			\begin{align*}
				\norm{f_n}_{\X} \to \norm{f}_{\X}.
			\end{align*}
			Then $f_n \to f$ in $\X$.
		\end{proposition}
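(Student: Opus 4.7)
The plan is to reduce to the case of unit vectors and then exploit the modulus of uniform convexity via a midpoint estimate. First I would dispose of the trivial case $f=0$: the hypothesis $\|f_n\|_\X \to \|f\|_\X = 0$ is already the statement $f_n \to 0 = f$ in $\X$, so no further argument is needed. For the remaining case $f\neq 0$, I would discard finitely many terms so that $\|f_n\|_\X > 0$ for all $n$, and pass to the normalized vectors
\[
g_n := \frac{f_n}{\|f_n\|_\X}, \qquad g := \frac{f}{\|f\|_\X},
\]
which satisfy $\|g_n\|_\X = \|g\|_\X = 1$. Using $\|f_n\|_\X \to \|f\|_\X \neq 0$ together with $f_n \rightharpoonup f$, a routine check (test against any $\varphi \in \X'$) gives $g_n \rightharpoonup g$ in $\X$.

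The next step is to control the norms of the midpoints $(g_n+g)/2$. Weak convergence $g_n \rightharpoonup g$ yields $(g_n+g)/2 \rightharpoonup g$, so by weak lower semicontinuity of the norm and the triangle inequality I would obtain
\[
1 = \|g\|_\X \leq \liminf_{n\to\infty}\bigg\|\frac{g_n + g}{2}\bigg\|_\X \leq \limsup_{n\to\infty}\bigg\|\frac{g_n + g}{2}\bigg\|_\X \leq \frac{\|g_n\|_\X + \|g\|_\X}{2} = 1,
\]
so that $\|(g_n+g)/2\|_\X \to 1$.

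The heart of the proof is the invocation of uniform convexity. Let $\delta(\eps) > 0$ be the modulus of uniform convexity of $\X$, characterized by the property that $\|x\|_\X = \|y\|_\X = 1$ and $\|x-y\|_\X \geq \eps$ force $\|(x+y)/2\|_\X \leq 1 - \delta(\eps)$. If, contrary to the desired conclusion, there existed $\eps > 0$ and a subsequence along which $\|g_n - g\|_\X \geq \eps$, then along that subsequence $\|(g_n+g)/2\|_\X \leq 1 - \delta(\eps) < 1$, contradicting the limit just established. Hence $g_n \to g$ strongly in $\X$, and rescaling back gives $f_n = \|f_n\|_\X\, g_n \to \|f\|_\X\, g = f$ in norm.

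I do not expect a genuine obstacle: the argument is classical and the only place where uniform convexity is really used is the midpoint estimate in the last paragraph. The single point demanding mild care is the transfer of weak convergence from $f_n$ to the normalized sequence $g_n$, which relies precisely on the nonvanishing of $\|f\|_\X$ and on the scalar convergence $\|f_n\|_\X \to \|f\|_\X$.
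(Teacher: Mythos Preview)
Your proof is correct and follows essentially the same route as the paper: normalize to unit vectors, show the midpoints $(g_n+g)/2$ have norm tending to $1$, and then invoke uniform convexity to force $g_n\to g$. The only cosmetic difference is that you obtain the lower bound $\liminf\|(g_n+g)/2\|_\X\geq 1$ directly from weak lower semicontinuity of the norm, whereas the paper produces it by choosing a norming functional $h\in\X'$ via Hahn--Banach and computing $\langle h,(g_n+g)/2\rangle\to 1$; you also handle the case $f=0$ explicitly, which the paper omits.
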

		
		\begin{proof}
			Suppose $\{f_n\}_{n\in\N}$ is a weakly convergent sequence in a uniformly convex Banach space $\X$ such that $\norm{f_n}_{\X} \to \norm{f}_\X$. Let us consider
			\begin{align*}
				g_n = \frac{f_n}{\norm{f_n}_\X}\ \text{ and }\ g = \frac{f}{\norm{f}_\X}.
			\end{align*}
			Then, we have $\norm{g_n}_\X = 1$, for $n\in\N$ and $\norm{g}_\X =1$, and also $g_n \rightharpoonup g$ in $\X$ and $\norm{g_n}_\X \to 1$.
			
			Now, by the Hahn-Banach Theorem, take $h\in\X^\prime$ with $\norm{h}_{\X^\prime} = 1$ and ${\langle h, g \rangle} =1$ (\cite[Lemma 20.2]{JCR-20}).
			Then due to weak convergence, we obtain
			\begin{align*}
				{\left\langle h, g_n \right\rangle} \to 1.
			\end{align*}
			Thus, we have
			\begin{align*}
				\abs{\frac{\langle h, g_n\rangle + \langle h, g \rangle}{2}} \to 1,
			\end{align*}
			and also
			\begin{align*}
				\abs{\frac{\langle h, g_n\rangle + \langle h, g \rangle}{2}} \leq \norm{h}_{\X^\prime} \norm{\frac{g_n + g}{2}}_{\X} = \norm{\frac{g_n + g}{2}}_{\X} \leq 1.
			\end{align*}
			By sandwich principle, we assert that
			\begin{align*}
				\norm{\frac{g_n + g}{2}}_{\X} \to 1,
			\end{align*}
			i.e., for every $\delta >0,$ there exists $N\in \N$ such that
			\begin{align*}
				1 - \delta < \norm{\frac{g_n + g}{2}}_{\X},\ \text{ for every }\ n \geq N.
			\end{align*}
			Now, using the uniform convexity of $\X$, it follows that
			\begin{align*}
				\norm{g_n -g}_{\X} \to 0.
			\end{align*}
			Hence using the fact that $\norm{f_n}_{\X} \to \norm{f}_{\X}$, we deduce that $f_n \to f$ in $\X$.
		\end{proof}
		
		To conclude this section, we give a maximum principle for a damped heat equation.
		
		\subsection{Maximum principle}\label{sec-max-principle}
		Let us choose and fix $2\leq p<\infty$.	We consider a damped heat equation in $(0,T)\times\mathcal{O}$ as follows:
		\begin{align}\label{eqn-damp-heat-1}
			\left\{
			\begin{aligned}
				& \frac{\partial u(t)}{\partial t} -\Delta u(t) +\beta|u(t)|^{p-2}u(t)  \geq 0, \\
				& u(0)  = u_0\geq 0, \\
				& u(t)|_{\partial\mathcal{O}}  = 0,
			\end{aligned}
			\right.
		\end{align}
		where the function $\beta = \beta(x,t)\geq 0$ and $u:[0,\infty) \times\mathcal{O} \to \mathbb{R}$ with $u(t)=u(t,x)$ be a smooth solution.
		
		Let us first suppose the strict inequality, i.e.,
		\begin{align}\label{eqn-111}
			\frac{\partial u}{\partial t} -\Delta u +\beta|u|^{p-2}u + \mu u > 0\ \text{ in }\ (0,T)\times\mathcal{O},
		\end{align}
		but there exists a point $(t_0,x_0)\in (0,T]\times\mathcal{O}$ with $$u(t_0,x_0)=\min_{[0,T]\times\overline{\mathcal{O}}}u(t, x)<0.$$
		Given that $0 < t_0 < T$, the point $(t_0, x_0)$ is an interior point of $(0, T) \times \mathcal{O}$. Hence, 
		$$
		\partial_t u(t_0, x_0) = 0,
		$$ 
		due to the fact that $u$ reaches its maximum at $(t_0, x_0)$. 			
		On the other hand $\Delta u\geq 0$ at $(t_0,x_0)$. Therefore, we infer
		\begin{align*}
			\frac{\partial u}{\partial t} -\Delta u +\beta |u|^{p-2}u \leq 0 \ \text{ at }\ (t_0,x_0),
		\end{align*}
		a contradiction to \eqref{eqn-111}. Now, if suppose $t_0=T$, then, since $u$ attains its minimum over $\{T\}\times\mathcal{O}$ at $(t_0,x_0),$ we see that $$\frac{\partial u}{\partial t} \leq 0 \ \text{ at }\ (t_0,x_0).$$ Since the inequality $\Delta u\leq 0$ still holds true at $(t_0,x_0)$, once again we arrive at the contradiction
		\begin{align*}
			\frac{\partial u}{\partial t} -\Delta u +\beta |u|^{p-2}u  \leq 0 \ \text{ at }\ (t_0,x_0),
		\end{align*}
		and so
		\begin{align}\label{eqn-112}
			\min_{[0,T]\times\overline{\mathcal{O}}}u(t,x)=\min_{\{0\}\times\partial\mathcal{O}}u(t,x)=u(0,x)\big|_{\partial\mathcal{O}}\geq 0.
		\end{align}
		
		Let us now consider the general case that \eqref{eqn-111}  holds. We take $u^{\eps}(t,x)=u(t,x)+\eps t,$ where $\eps>0$. Then, we have
		\begin{align}
			\frac{\partial u^{\eps}}{\partial t}-\Delta u^{\eps}+\beta|u^{\eps}|^{p-2}u^{\eps}
			& = \frac{\partial u}{\partial t}+\eps-\Delta u+\beta|u+\eps t|^{p-2}(u+\eps t)\\
			& = \frac{\partial u}{\partial t}-\Delta u+\beta|u|^{p-2}u  +\eps+\beta|u+\eps t|^{p-2}(u+\eps t)-\beta|u|^{p-2}u\\
			& \geq \eps+\beta(p-1)|(u+\eps t)+\theta u|^{p-2}(\eps t) >0 \ \text{ in }\ (0,T)\times\mathcal{O},
		\end{align}
		for some $0<\theta<1$. Therefore, by using the previous case, we find $$\min_{[0,T]\times\overline{\mathcal{O}}}u^{\eps}(t,x)=\min_{\{0\}\times\partial\mathcal{O}}u^{\eps}(t,x)=u(0,x)\big|_{\partial\mathcal{O}}+\eps t.$$ Letting $\eps\to 0$, we arrive at \eqref{eqn-112}. Note that \eqref{eqn-112} easily gives $u(t,x)\geq 0$, for all $(t,x)\in [0,T]\times\overline{\mathcal{O}}$.
		
		Let us now consider the problem \eqref{eqn-main-problem-copy}. We know from \eqref{eqn-Eu} that
		\begin{align}\label{eqn-113}
			\|\nabla u(t)\|_{L^2(\mathcal{O})}^2+\|u(t)\|_{L^p(\mathcal{O})}^p\leq C,\ \text{ for all }\ t\geq 0.
		\end{align}
		Consider the transformation $v(t,x)=e^{-Ct} u(t,x)$, where $C$ is the constant appearing in \eqref{eqn-113}. Then $v$ satisfies
		\begin{align*}
			& \frac{\partial v(t)}{\partial t} -\Delta v(t) +e^{(p-2)Ct}|v(t)|^{p-2}v(t) \\
			& =-Ce^{-Ct}u(t)+e^{-Ct}\left[\frac{\partial u(t)}{\partial t}-\Delta u(t) + |u(t)|^{p-2}u(t) \right]\\
			& = -Ce^{-Ct}u(t)+e^{-Ct}\left(\|\nabla u(t)\|_{L^2(\mathcal{O})}^2+\|u(t)\|_{L^p(\mathcal{O})}^p\right)u(t)\\
			& = -\left[C-\left(\|\nabla u(t)\|_{L^2(\mathcal{O})}^2+\|u(t)\|_{L^p(\mathcal{O})}^p\right)\right]v(t)\leq 0.
		\end{align*}
		Since $v(0,x)=u(0,x)\geq 0$, from the above result, we infer $u(t,x)\geq 0,$ for all $(t,x)\in [0,T]\times\overline{\mathcal{O}}$.

		\appendix

		\medskip\noindent
		\textbf{Acknowledgments:} A. Bawalia wants to thank University Grants Commission (UGC), Govt. of India for financial assistance (File No.: 368/2022/211610061684).
		M. T. Mohan would  like to thank the Department of Science and Technology (DST) Science $\&$ Engineering Research Board (SERB), India for a MATRICS grant (MTR/2021/000066).

		\bibliographystyle{plain}
		\bibliography{Final_arxiv_damped_heat}
		
	\end{document}